\newtheorem{thm}{Theorem}[section]
\newtheorem{prop}[thm]{Proposition}
\newtheorem{lemma}[thm]{Lemma}
\newtheorem{lem}[thm]{Lemma}
\newtheorem{cor}[thm]{Corollary}
\theoremstyle{definition}
\newtheorem{defn}[thm]{Definition}
\newtheorem{notn}[thm]{Notation}
\newtheorem*{definition*}         {Definition}
\theoremstyle{remark}
\newtheorem{remark}[thm]{Remark}
\newcommand{\End}{\mathrm{End}}
\newcommand{\Hom}{\mathrm{Hom}}
\newcommand{\bH}{\mathbb{H}}
\newcommand{\bZ}{\mathbb{Z}}
\newcommand{\Z}{\mathbb{Z}}
\newcommand{\Q}{\mathbb{Q}}
\newcommand{\bQ}{\mathbb{Q}}
\newcommand{\bR}{\mathbb{R}}
\newcommand{\bC}{\mathbb{C}}
\newcommand{\C}{\mathbb{C}}
\newcommand{\cB}{\mathcal{B}}
\newcommand{\cO}{\mathcal{O}}
\newcommand{\cD}{\mathcal{D}}
\newcommand{\cF}{\mathcal{F}}
\newcommand{\cL}{\mathcal{L}}
\newcommand{\cM}{\mathcal{M}}
\newcommand{\cS}{\mathcal{S}}
\newcommand{\cV}{\mathcal{V}}
\newcommand{\cW}{\mathcal{W}}
\newcommand{\bfG}{\mathbf{G}}
\newcommand{\bfH}{\mathbf{H}}
\newcommand{\bfU}{\mathbf{U}}
\newcommand{\bfGL}{\mathbf{GL}}
\newcommand{\bfS}{\mathbf{S}}
\newcommand{\arrow}{\rightarrow}
\newcommand{\Aut}{\mathrm{Aut}}
\newcommand{\ad}{\mathrm{ad}}
\newcommand{\Ad}{\mathrm{Ad}}
\newcommand{\Res}{\mathrm{Res}}
\renewcommand{\phi}{\varphi}
\def\ra{\rightarrow}
\def\Hdg{\operatorname{Hdg}}
\def\id{\operatorname{id}}
\def\GL{\operatorname{GL}}
\def\SL{\operatorname{SL}}
\def\sl{\operatorname{sl}}
\def\Hom{\operatorname{Hom}}
\def\Res{\operatorname{Res}}
\def\Gr{\operatorname{Gr}}
\def\Ext{\operatorname{Ext}}
\def\End{\operatorname{End}}
\def\SL{\operatorname{SL}}
\def\Ad{\operatorname{Ad}}
\def\ad{\operatorname{ad}}
\def\Aut{\operatorname{Aut}}
\def\MT{\mathbf{MT}}
\def\NL{\operatorname{NL}}
\def\cNL{\mathcal{NL}}
\newcommand{\alg}{\mathrm{alg}}
\newcommand{\an}{\mathrm{an}}
\renewcommand{\bar}[1]{\overline{#1}}
\title{Definability of mixed period maps} 
 \author[B. Bakker]{Benjamin Bakker}
\address{\noindent B. Bakker:  Dept. of Mathematics, University of Georgia, Athens, USA.}
\email{bakker@math.uga.edu}
\author[Y. Brunebarbe]{Yohan Brunebarbe}
\address{\noindent Y. Brunebarbe:  Dept. of Mathematics, Univ. Bordeaux, Talence, France.}
\email{yohan.brunebarbe@math.u-bordeaux.fr}
\author[N. Klingler]{Bruno Klingler}
\address{\noindent B. Klingler:  Dept. of Mathematics, Humboldt
  Universit\"{a}t, Berlin, Germany.}
\email{bruno.klingler@math.hu-berlin.de}
\author[J. Tsimerman]{Jacob Tsimerman}
\address{\noindent J. Tsimerman:  Dept. of Mathematics, University of Toronto, Toronto, Canada.}
\email{jacobt@math.toronto.edu}
\thanks{B.B. was partially supported by NSF grants DMS-1702149 and DMS-1848049. }
\begin{document}
\begin{abstract}We equip integral graded-polarized mixed period spaces
  with a natural $\bR_{\alg}$-definable analytic structure, and prove
  that any period map associated to an admissible variation of
  integral graded-polarized mixed Hodge structures is definable in
  $\bR_{\an,\exp}$ with respect to this structure. As a consequence we
  reprove that the zero loci of admissible normal functions are
  algebraic. \end{abstract} 
\maketitle


\section{introduction}

\subsection{Summary}

The purpose of this paper is to continue the development of
o-minimality as a natural setting for the study of Hodge theory. In
\cite{bkt} it was shown that the moduli of integral polarized pure
Hodge structures---known as \emph{period spaces}---admit natural
structures of definable analytic spaces, in such a way that all period
maps from algebraic varieties are definable. The general functorial
setting of definable analytic spaces was studied in \cite{bbt1}. The
purpose of this article is to extend this technology to the setting of mixed Hodge structures. 

One complication that enters when studying variations of mixed Hodge
  structures (VMHS) is that one must additionally restrict to
\emph{admissible} ones in the sense of Steenbrink-Zucker
  and Kashiwara, instead of just ones that are
holomorphic and Griffiths transverse. This apparent complication,
crucial for the internal coherence of Hodge theory as developed in
the theory of Hodge modules \cite{saito}, fits perfectly with
the o-minimal setting. For any smooth complex algebraic variety $S$
the (not necessarily admissible) VMHSs extensions of $\Z_{S^{\an}}(0)$ by
$\Z_{S^{\an}}(1)$ are parametrized by $\Gamma(S, \cO_{S^{\an}}^*)$,
corresponding to holomorphic period maps $ \varphi: S^\an \arrow
\Ext^1_{\Z MHS}(\Z(0),\Z(1))\cong\C^*$ from $S^\an$ to the mixed period
space $\C^*$. For $S = \mathbb{A}^1$ the period map $\exp:
\C \arrow \C^*$ cannot possibly be definable in any o-minimal
structure; however the VMHS on $\mathbb{A}^1$ it defines is not admissible.

Unlike in the pure case, there is some ambiguity in the choice of definable structure. Indeed, if we think only of the ``unipotent" fibers\footnote{Formally, the fibers of the map from the mixed period space to the product of the pure period spaces corresponding to taking the associated graded variation.} we end up with a quotient of unipotent groups, for which there are many choices of which
definable structure---this is already the case for $\C^*$. The definable structure appearing in Theorem \ref{definability of period maps} is built using the ``$\sl_2$" real splitting (also known as the ``canonical" real splitting), but it is not inconceivable to us that one could use other natural definable structures and retain our main results.

\subsection{Results}

In section \S3 we equip any graded-polarized integral mixed period space $\Gamma\backslash\cM$ with the structure of a $\bR_{\alg}$-definable analytic space which is functorial with respect to morphisms of mixed period spaces (see Theorem \ref{thm def Hodge var}). Our main result is the following:

\begin{thm}[\ref{definability of period maps}]  Let
  $\Gamma\backslash\cM$ be a graded-polarized integral mixed period
  space equipped with the $\bR_{\alg}$-definable structure associated
  to the $\sl_2$-splitting.  Let $S$ be a reduced complex algebraic
  space and $\phi : S \ra\Gamma\backslash\cM$ an admissible period
  map. 
Then $\phi$ is $\bR_{\an, \exp}$-definable.
\end{thm}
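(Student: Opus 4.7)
The plan is to follow the strategy of \cite{bkt} for the pure case, replacing Schmid's nilpotent and $\SL_2$-orbit theorems by their extensions to admissible variations of mixed Hodge structures. The starting point is a reduction to a local model: using resolution of singularities and the fact that both admissibility and definability are local on the source, one may assume $S$ is smooth and quasi-projective and admits a smooth projective compactification $\bar S$ with $D = \bar S \setminus S$ a simple normal crossing divisor, and then work étale-locally near a boundary point so that $S = (\Delta^*)^k \times \Delta^{n-k}$ inside $\Delta^n$.

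On the universal cover $\bH^k \times \Delta^{n-k}$, the lifted period map $\tilde\phi$ takes values in $\cM$, and admissibility supplies a nilpotent orbit approximation: writing $q_i = e^{2\pi i z_i}$ for the boundary coordinates and $N_1,\dots,N_k$ for the commuting monodromy logarithms, one has
\[ \tilde\phi(z, w) \;=\; \exp\!\left(\sum_{i=1}^k z_i N_i\right)\cdot \psi(q, w), \]
where $\psi$ extends holomorphically across $D$ into a suitable ``untwisted'' model of $\cM$. This is the mixed analogue of Schmid's nilpotent orbit theorem due to Kashiwara and Steenbrink--Zucker, with the several-variable refinements following from Pearlstein's $\SL_2$-orbit theorem for admissible variations.

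The next step is to use the $\sl_2$-splitting, which by Theorem \ref{thm def Hodge var} endows $\cM$ with an $\bR_{\alg}$-definable structure, to produce a Siegel-type fundamental set $\Sigma \subset \cM$ for the $\Gamma$-action which is itself $\bR_{\alg}$-definable and adapted to the monodromy cone $\sum_i \bR_{\geq 0} N_i$. After translating $\tilde\phi$ by a suitable monodromy element one arranges that its image lies in $\Sigma$. On $\Sigma$, the nilpotent orbit formula reduces the transcendence of $\tilde\phi$ to the single map $z \mapsto \exp(\sum_i z_i N_i)$; since each $N_i$ is nilpotent this is polynomial in the $z_i$, and because $z_i = \frac{1}{2\pi i}\log q_i$ with $\Real(z_i)$ bounded and $\Imag(z_i) = -\frac{1}{2\pi}\log|q_i|$ on the chosen fundamental domain, the resulting expression is $\bR_{\an,\exp}$-definable in the $q$-coordinates. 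Combining this with the $\bR_{\an}$-definability of $\psi$ (which extends holomorphically across $D$) yields the theorem.

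The principal technical obstacle is producing the fundamental set $\Sigma$ in the mixed setting and showing that the image of $\tilde\phi$ genuinely lies in it after monodromy reduction. In the pure case a single $\SL_2$-orbit controls the asymptotics, but in the mixed case one must simultaneously align the Deligne splittings of the limit mixed Hodge structures along each face of the cone $\sum_i \bR_{\geq 0} N_i$; this requires Pearlstein's several-variable $\SL_2$-orbit theorem together with the relative weight filtration condition built into admissibility. Reconciling the $\sl_2$-splittings produced by the orbit theorem with the fixed $\sl_2$-splitting defining our $\bR_{\alg}$-definable structure on $\cM$---and verifying that $\bR_{\an,\exp}$-definability survives the gluing of local charts---is the central difficulty.
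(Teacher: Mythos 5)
Your overall strategy matches the paper's: reduce to $(\Delta^*)^n$, use the nilpotent orbit theorem (via admissibility) to show that the ``untwisted'' map $\Psi = \exp(-\sum z_j N_j)\cdot\tilde\phi$ extends holomorphically, deduce $\bR_{\an,\exp}$-definability of $\tilde\phi$ on a vertical strip, and then handle the descent to the quotient via a fundamental-set argument tied to the $\sl_2$-splitting. So the skeleton is right.

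The gap is in the fundamental-set step, which is also the genuinely new mixed-theoretic input. You write that ``after translating $\tilde\phi$ by a suitable monodromy element one arranges that its image lies in $\Sigma$''; this is not correct as stated and is not what one proves. One shows that the image of a vertical strip is contained in a \emph{finite union} of $\Gamma$-translates of a definable fundamental set, and the fundamental set itself is $r^{-1}(\Xi)$ where $r:\cM\to\cM_\bR$ is the $\sl_2$-retraction and $\Xi$ is a definable fundamental set for $\Gamma\backslash\cM_\bR\cong\Gamma\backslash(\Omega\times\cS(W)(\bR))$. The precise ingredient you are missing is Brosnan--Pearlstein's boundedness statement (Cor.\ 2.34 of \cite{bp}, which builds on the Kato--Nakayama--Usui mixed $\SL_2$-orbit theorem): the composition of $\tilde\phi$ with the $\cS(W)(\bR)$-component of the $\sl_2$-splitting is \emph{bounded} on any vertical strip. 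Combined with the pure case from \cite{bkt} (the $\Omega$-factor lands in finitely many Siegel set translates), this gives the finite-union containment. Your discussion about ``simultaneously aligning the Deligne splittings of limit mixed Hodge structures along each face of the cone'' and ``reconciling $\sl_2$-splittings'' is roughly a description of what happens inside the Brosnan--Pearlstein proof, but it is not a step one performs here; the paper treats the boundedness result as a black box, and there is nothing to reconcile because the definable structure on $\Gamma\backslash\cM$ is defined precisely in terms of that same $\sl_2$-retraction. Without isolating this boundedness input, your proposal does not actually close the argument.
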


It should be noted that the work of Brosnan--Pearlstein \cite{bp} building on the mixed $\SL_2$-orbit theorem of Kato--Nakayama--Usui \cite{knu} is a key ingredient in our proofs, giving the necessary boundedness statement for us to prove definability.

In \cite{bkt} we recovered as an immediate corollary of the definability of the
period map for pure VHS the algebraicity of the corresponding Hodge
loci proven in \cite{alghodge}. Similarly as an immediate corollary of Theorem \ref{definability of period maps}
we recover the algebraicity of (possibly non-reduced) mixed Hodge loci (in particular the zero-loci of
admissible normal functions) obtained in \cite{BP091},  \cite{BP092},
\cite{bp}, \cite{BPS} .  Recall that for a graded-polarized
integral mixed Hodge structure $V=(V_\Z,W,F, q_k)$ the set of integral
weight zero Hodge classes is $\Hdg_0(V)_\Z:=
\Hom_{\bZ-\mathrm{MHS}}(\bZ(0), V) =(W_0)_\Z\cap F^0$, and we define
$\Hdg_0^d(V)_\Z\subset \Hdg_0(V)_\Z$ as the subset of Hodge classes
$v$ with $q_0(v,v)\leq d$, where $q_0$ is the polarization form on
$\Gr^W_0V_\Z$.  The locus $\Hdg^d_0(\Gamma\backslash
\cM)\subset\Gamma\backslash \cM$ of points $V$ for which
$\Hdg_0^d(V)\neq 0$ is a definable analytic subspace, and for any
period map $\phi:S\to\Gamma\backslash\cM$ we define
$\Hdg_0^d(S)\subset S$ to be the pullback of
$\Hdg^d_0(\Gamma\backslash \cM)$ with its natural
not-necessarily-reduced structure as a definable analytic subspace. 

\begin{cor}  Let $\phi:S\to\Gamma\backslash \cM$ be as in the theorem.  Then the Hodge subspace $\Hdg_0^d(S)\subset S$ is algebraic.
\end{cor}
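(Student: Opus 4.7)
The plan is to show that $\Hdg_0^d(S)\subset S$ is a closed $\bR_{\an,\exp}$-definable analytic subspace and then to invoke the definable Chow/GAGA theorem from \cite{bbt1}: any closed definable analytic subspace of a reduced complex algebraic space (equipped with its canonical $\bR_{\alg}$-definable analytic structure) is automatically algebraic. This is the same two-step mechanism used in \cite{bkt} to deduce algebraicity of Hodge loci from definability of the period map in the pure setting.

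For the first step I would combine Theorem \ref{definability of period maps} with the definability assertion stated just before the corollary. By hypothesis, $\Hdg_0^d(\Gamma\backslash\cM)$ is a closed definable analytic subspace of $\Gamma\backslash\cM$ in its $\bR_{\alg}$-definable analytic structure arising from the $\sl_2$-splitting, and the theorem gives that $\phi$ is $\bR_{\an,\exp}$-definable. Forming the fiber product
\[
\Hdg_0^d(S) = S \times_{\Gamma\backslash\cM} \Hdg_0^d(\Gamma\backslash\cM)
\]
in the category of $\bR_{\an,\exp}$-definable analytic spaces then realizes $\Hdg_0^d(S)$, with its not-necessarily-reduced structure, as a closed definable analytic subspace of $S$. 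It is important that the pullback is taken at the level of definable analytic spaces rather than merely of sets, as this is what will let us preserve the scheme structure after algebraization.

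For the second step, since $S$ is a reduced complex algebraic space, the definable GAGA theorem of \cite{bbt1} forces any such closed $\bR_{\an,\exp}$-definable analytic subspace to be algebraic, yielding the corollary at the level of (possibly non-reduced) subspaces and not merely of underlying sets. As a special case, the zero locus of an admissible normal function—which is defined by the vanishing of a section, hence fits into the framework as a $\Hdg_0^d$ locus in an extension variation—comes out algebraic, recovering the results of \cite{BP091,BP092,bp,BPS}.

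The genuine obstacle in this chain of reasoning is not in the corollary itself but in Theorem \ref{definability of period maps} upstream, where the mixed $\SL_2$-orbit theorem of Kato--Nakayama--Usui and the boundedness statements of Brosnan--Pearlstein are essential for the $\bR_{\an,\exp}$-definability of $\phi$ with respect to the $\sl_2$-splitting structure. Once that theorem is granted, the passage to algebraicity of Hodge loci is a formal consequence of the o-minimal algebraization principle, completely parallel to the pure case treated in \cite{bkt}.
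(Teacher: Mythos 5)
Your proposal is correct and takes essentially the same approach as the paper: pull back the definable analytic subspace $\Hdg_0^d(\Gamma\backslash\cM)$ along the $\bR_{\an,\exp}$-definable period map $\phi$ to obtain a closed definable analytic subspace of $S$, then apply definable GAGA \cite[Theorem 3.1]{bbt1} to conclude algebraicity, preserving the possibly non-reduced structure. This is exactly the mechanism the paper runs at the end of \S6, modeled on the Noether--Lefschetz corollary and the pure-case argument from \cite{bkt}.
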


Without too much difficulty, the same can be shown for the locus of bounded-norm Hodge classes $\Hdg_0^d(\cV)\subset\cV_\C$ in the total space of an admissible variation $(\cV,\cW,\cF)$, but we leave this to the reader.

\subsection{Outline}

In \S2 we recall some facts about how definable quotients work, and the relation between definable structures on quotients and choices of fundamental sets.
In \S3 we recall relevant background from mixed Hodge theory and the various real splittings that we use, and put a definable structure on graded-polarized integral mixed period spaces.
In \S4 we give a notion of variations of mixed Hodge structure and period maps on arbitrary algebraic varieties, review the notion of admissibility and its consequences, and state our main theorem.
In \S5 we prove our main theorem. Finally, in \S6 we generalize the construction of \S3 to place a definable structure on mixed Hodge varieties and prove functoriality.

\section{Definable quotients}
In this section we fix an o-minimal structure and we work in the category of definable locally compact Hausdorff topological spaces and definable continuous maps. \\

Let $X$ be a locally compact Hausdorff definable topological space and $\Gamma$ a group acting on $X$ by definable homeomorphisms.

\begin{defn}
A fundamental set for the action of $\Gamma$ on $X$ is an open definable subset $F \subseteq X$ such that
\begin{enumerate}
\item $\Gamma \cdot F = X$,
\item the set $\{ \gamma \in \Gamma \, | \, \gamma \cdot F  \cap F \neq \varnothing \}$ is finite.
\end{enumerate}
\end{defn}

\begin{remark}
The existence of a fundamental set for the action of $\Gamma$ on $X$ implies that $\Gamma$ equipped with the discrete topology acts properly on $X$. In particular, the set $\Gamma \backslash X$ equipped with the quotient topology is a locally compact Hausdorff topological space.
\end{remark}

\begin{prop}\label{prop fun sets}
If $F$ is a fundamental set for the action of $\Gamma$ on $X$, then there exists a unique definable structure on $\Gamma \backslash X$ such that the canonical map $F \arrow \Gamma \backslash X$ is definable.
\end{prop}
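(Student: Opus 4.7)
The plan is to realize $\Gamma\backslash X$ as the quotient of the definable space $F$ by a definable equivalence relation with uniformly finite classes, and then to invoke a general existence theorem for such definable quotients.

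For existence, I would first observe that the restriction $\pi|_F : F \to \Gamma\backslash X$ of the quotient map is a continuous open surjection: surjectivity follows from $\Gamma\cdot F = X$; openness follows from $F$ being open in $X$ together with openness of $\pi$, which in turn holds because $\pi^{-1}\pi(U) = \bigcup_{\gamma\in\Gamma}\gamma U$ is open for any open $U\subseteq X$. Setting $\Sigma := \{\gamma\in\Gamma : \gamma F\cap F\neq\varnothing\}$, which is finite by hypothesis, the equivalence relation $R\subseteq F\times F$ defined by $xRy \iff \pi(x)=\pi(y)$ admits the description
\[ R = \bigcup_{\gamma\in\Sigma} \{(x,\gamma x) : x \in F \cap \gamma^{-1}F\}, \]
a finite union of graphs of definable homeomorphisms, hence a definable subset of $F\times F$; moreover each $R$-class has at most $|\Sigma|$ elements. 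Next I would apply the general construction (available in \cite{bbt1} and classical o-minimal theory) producing the definable quotient $F/R$ of a locally compact Hausdorff definable space by a definable equivalence relation with uniformly finite classes, through which the canonical projection $F \to F/R$ is definable. The induced continuous bijection $F/R \to \Gamma\backslash X$ is then automatically a homeomorphism since $\pi|_F$ is continuous and open, and transporting along it yields the required definable structure on $\Gamma\backslash X$.

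For uniqueness, suppose $\mathcal{A}_1$ and $\mathcal{A}_2$ are two definable structures on $\Gamma\backslash X$ each making $\pi|_F$ definable. For any $\mathcal{A}_1$-definable subset $V\subseteq\Gamma\backslash X$, the preimage $\pi^{-1}(V)\cap F$ is definable in $F$, and since $\pi|_F$ is surjective, $V = \pi|_F(\pi^{-1}(V)\cap F)$ is the image of a definable subset of $F$ under the $\mathcal{A}_2$-definable map $\pi|_F$, hence $\mathcal{A}_2$-definable; swapping the roles shows the two structures coincide. The main obstacle is really the invocation of the general quotient theorem in the middle step: one must verify that the notion of ``definable topological space'' in force in the paper admits quotients by definable equivalence relations with uniformly finite classes (equivalently, that locally around each orbit one can definably glue the finitely many sheets meeting $F$). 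Given that this is available from prior work, the remaining identifications and the uniqueness argument are purely formal.
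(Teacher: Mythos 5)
Your argument is essentially the same as the paper's: both exhibit the induced equivalence relation on $F$ as a finite union of graphs of the definable homeomorphisms $x\mapsto\gamma x$ over $F\cap\gamma^{-1}F$, conclude that it is definable, and then appeal to a general definable-quotient construction. One point worth flagging: the hypothesis you invoke for the quotient theorem — a definable equivalence relation with \emph{uniformly finite classes} — is weaker than what is actually needed and what the paper uses, namely that $R_F$ be closed, definable, and \emph{\'etale} (both projections $R_F\rightrightarrows F$ local homeomorphisms). Uniform finiteness of classes alone does not make the quotient map a local homeomorphism. Fortunately your description of $R_F$ as a finite union of graphs of homeomorphisms already establishes \'etaleness, and closedness follows from the properness of the $\Gamma$-action noted before the proposition, so the hypotheses of the correct quotient lemma are in fact verified by your argument even if the lemma you name is not quite the right one. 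Your explicit uniqueness argument is a genuine addition — the paper leaves uniqueness implicit — and is sound, though it is cleaner to phrase it as: the graph of $\id:(\Gamma\backslash X,\mathcal A_1)\to(\Gamma\backslash X,\mathcal A_2)$ equals $(\pi|_F\times\pi|_F)(\Delta_F)$, which is definable since $\pi|_F$ is definable for both structures.
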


\begin{proof}
Let $R \subset X \times X$ be the equivalence relation associated to the action of $\Gamma $ on $X$, that is, $R = \{ (x, \gamma \cdot x) \}$. As observed before, the action of $\Gamma$ on $X$ is necessarily proper, hence $R$ is a closed subset of $X \times X$. It follows that the induced equivalence relation $R_F :=  R \cap (F \times F)$ on $F$ is closed. Moreover, the set
\[ R_F = \bigcup \limits_{\gamma \in \Gamma}  \{ (x,  \gamma \cdot x) \in F \times F \}  \]
is definable, since only matter the finite number of $\gamma$ for which $\gamma \cdot F  \cap F \neq \varnothing$. We conclude using that $ \Gamma \backslash X = F / R_F$ as topological spaces and that the equivalence relation $R_F$ on $F$ is closed, definable and \'etale.
 \end{proof}

\begin{remark}
\begin{enumerate}
\item The definable structure on $\Gamma \backslash X$ depends on the choice of $F$. (For example, two strips in $\bC$ with different slopes give different definable structures on the quotient $\bC / \bZ = \bC^\ast$.)
\item Two fundamental sets $F$ and $F^\prime$ define the same
definable structure on $\Gamma \backslash X$ if and only if
  $F$ is contained in a finite union of translates of $F^\prime$ under
  elements of $\Gamma$.
\item The map $F \arrow \Gamma \backslash X$ admits locally on the base some continuous definable section. Therefore, giving a morphism from a definable space $Y$ to $\Gamma \backslash X$ is equivalent to
giving a finite definable cover $Y = \cup Y_i$ and morphisms $Y_i \arrow F$ such that the induced maps $Y_i \arrow \Gamma \backslash X$ coincide on overlaps.
\item In case $X$ is a complex manifold and $\Gamma$ acts by definable
  biholomorphism the construction above is compatible with the complex structure.
\end{enumerate}
\end{remark}

\begin{prop}
If $F$ is a fundamental set for the action of $\Gamma$ on $X$ and $\Gamma^\prime \subset \Gamma$ is a finite index subgroup, then for any finite subset $C \subset \Gamma$ mapping surjectively onto $\Gamma / \Gamma^\prime$ the set $\bigcup_{\gamma\in C} \, \gamma \cdot F $ is a fundamental set for the action of $\Gamma^\prime$ on $X$. Moreover, the induced definable structure on $\Gamma^\prime \backslash X$ is independent of $C$ and the map $\Gamma^\prime \backslash X \arrow \Gamma \backslash X$ is definable.
\end{prop}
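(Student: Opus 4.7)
My plan is to verify directly that $F_C := \bigcup_{c \in C} c \cdot F$ satisfies the three axioms of a fundamental set for the $\Gamma'$-action, and then deduce the independence of $C$ and the definability of the quotient map from parts (2) and (3) of the preceding Remark.

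First I would check the three axioms for $F_C$. Openness and definability are immediate, as $F_C$ is a finite union of translates of $F$ under definable homeomorphisms. For the covering condition, given $x \in X$ the hypothesis $\Gamma \cdot F = X$ produces $g \in \Gamma$ with $x \in g \cdot F$; factoring $g = \gamma' c$ with $\gamma' \in \Gamma'$ and $c \in C$ (possible by the assumption on $C$) places $x$ in $\gamma' \cdot F_C$. For the finiteness condition, setting $S := \{g \in \Gamma : g \cdot F \cap F \neq \varnothing\}$ and observing it is finite, any $\gamma' \in \Gamma'$ with $\gamma' \cdot F_C \cap F_C \neq \varnothing$ must satisfy $\gamma' c_1 \cdot F \cap c_2 \cdot F \neq \varnothing$ for some $c_1, c_2 \in C$, hence $c_2^{-1} \gamma' c_1 \in S$ and $\gamma' \in C \cdot S \cdot C^{-1}$, a finite set.

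Next, for independence of the definable structure on the choice of $C$: given a second choice $C'$, I would write each $c \in C$ as $\gamma_c c'$ with $\gamma_c \in \Gamma'$ and $c' \in C'$, so that $c \cdot F \subset \gamma_c \cdot F_{C'}$ and $F_C$ is covered by finitely many $\Gamma'$-translates of $F_{C'}$. Part (2) of the Remark then gives that $F_C$ and $F_{C'}$ induce the same definable structure on $\Gamma' \backslash X$.

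Finally, for definability of $\pi : \Gamma' \backslash X \to \Gamma \backslash X$: by part (3) of the Remark applied to $F_C \to \Gamma' \backslash X$, it suffices to check that the composition $F_C \to \Gamma' \backslash X \xrightarrow{\pi} \Gamma \backslash X$, which is just the restriction of the $\Gamma$-quotient $X \to \Gamma \backslash X$ to $F_C$, is definable. I would apply the same part of the Remark again on the finite definable cover $F_C = \bigcup_{c \in C} c \cdot F$, using the maps $c \cdot F \to F$, $y \mapsto c^{-1} y$; the resulting maps to $\Gamma \backslash X$ automatically coincide on overlaps because they all compute the $\Gamma$-quotient. The argument is essentially bookkeeping with cosets; the one point requiring care, since $\Gamma'$ need not be normal in $\Gamma$, is to be consistent about the side on which cosets are taken.
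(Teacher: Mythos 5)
The paper states this proposition without proof, so there is no argument to compare against; your proof supplies exactly what is missing and is correct. A few remarks. Your verification of the three fundamental-set axioms is clean, and the bound $\gamma' \in C\,S\,C^{-1}$ for the finiteness condition is the right observation. The reductions of the two ``Moreover'' claims to parts (2) and (3) of the preceding Remark are exactly what those remarks are there for. On the coset-side point you flag at the end: you are right to be careful, and worth spelling out why. Your covering argument factors $g = \gamma' c$ with $\gamma' \in \Gamma'$, $c \in C$, which requires $\Gamma = \Gamma' \cdot C$, i.e.\ that $C$ meets every \emph{right} coset $\Gamma' g$. A set of representatives for the \emph{left} cosets $\Gamma/\Gamma'$ need not have this property when $\Gamma'$ is not normal (e.g.\ in $S_3$ with $\Gamma' = \{e,(12)\}$, the left-coset representatives $\{e,(13),(132)\}$ satisfy $\Gamma' \cdot C \subsetneq \Gamma$). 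So either one should read ``$\Gamma/\Gamma'$'' in the statement as $\Gamma'\backslash\Gamma$, or one should take $F_C = \bigcup_{c\in C} c^{-1}\cdot F$. Since you use the right-coset convention consistently throughout (both in the covering step and in the independence-of-$C$ step), your argument is internally coherent; just note explicitly at the outset that you interpret the hypothesis as $\Gamma = \Gamma'\cdot C$.
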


\begin{prop}\label{definability by pull-back}
Let $X$ and $Y$ be locally compact Hausdorff definable topological spaces and $\Gamma$ a group acting on both $X$ and $Y$ by definable homeomorphisms. Let $f : X \arrow Y$ be a $\Gamma$-equivariant continuous map.
\begin{itemize}
\item If $F$ is a fundamental set for the action of $\Gamma$ on $Y$, then $f^{-1}(F)$ is a fundamental set for the action of $\Gamma$ on $X$.
\item The induced continuous map $\Gamma \backslash X \arrow \Gamma \backslash Y$ is definable.
\end{itemize}
\end{prop}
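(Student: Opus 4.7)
My plan is to verify the two bullets in order; each essentially transports the defining properties of a fundamental set across $f$ using $\Gamma$-equivariance.

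For the first bullet, the set $f^{-1}(F)$ is open (since $f$ is continuous) and definable (since $f$ and $F$ are). The covering property is an immediate application of equivariance: given $x \in X$, pick $\gamma \in \Gamma$ with $\gamma^{-1} f(x) \in F$; by $\Gamma$-equivariance $f(\gamma^{-1} \cdot x) = \gamma^{-1} \cdot f(x) \in F$, so $x \in \gamma \cdot f^{-1}(F)$. For the finiteness property, I would show that
\[ \{\gamma \in \Gamma : \gamma \cdot f^{-1}(F) \cap f^{-1}(F) \neq \varnothing\} \subseteq \{\gamma \in \Gamma : \gamma \cdot F \cap F \neq \varnothing\}, \]
since if $x \in f^{-1}(F) \cap \gamma \cdot f^{-1}(F)$ then $f(x) \in F$ and $f(x) = \gamma \cdot f(\gamma^{-1}x) \in \gamma \cdot F$; the right-hand set is finite by hypothesis.

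For the second bullet, I would invoke the construction of Proposition \ref{prop fun sets}: the definable structures on $\Gamma \backslash X$ and $\Gamma \backslash Y$ are characterized by the requirement that $f^{-1}(F) \to \Gamma \backslash X$ and $F \to \Gamma \backslash Y$ be definable. The restriction $f : f^{-1}(F) \to F$ is definable, so the composite $f^{-1}(F) \to F \to \Gamma \backslash Y$ is definable. Since the induced map $\Gamma \backslash X \to \Gamma \backslash Y$ restricts to this composite on $f^{-1}(F)$, and since (by Remark (3) above) a map out of $\Gamma \backslash X$ is definable if and only if its precomposition with $f^{-1}(F) \to \Gamma \backslash X$ is, we conclude.

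There is essentially no obstacle here, but the only point requiring any care is that the map $f^{-1}(F) \to F$ really is well-defined, i.e.\ that $f^{-1}(F) \subseteq f^{-1}(F)$ maps into $F$, which is tautological; so the proof is essentially a diagram chase combined with the finiteness transfer above.
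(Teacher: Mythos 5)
Your proof of the first bullet is correct and matches the paper's argument: the covering property follows from equivariance (the paper phrases it more compactly as $\Gamma \cdot f^{-1}(F) = f^{-1}(\Gamma\cdot F) = X$, but your pointwise version is the same), and the finiteness property is obtained via exactly the same containment $\{\gamma : \gamma f^{-1}(F)\cap f^{-1}(F)\neq\varnothing\} \subseteq \{\gamma : \gamma F\cap F\neq\varnothing\}$. For the second bullet the paper gives no explicit argument, so you are supplying a detail it leaves implicit. Your argument is sound: by the construction in Proposition \ref{prop fun sets}, $\Gamma\backslash X = f^{-1}(F)/R_{f^{-1}(F)}$ with $R_{f^{-1}(F)}$ a closed definable \'etale equivalence relation, so a map out of $\Gamma\backslash X$ is definable iff its precomposition with the definable surjection $f^{-1}(F)\to\Gamma\backslash X$ is; combining this with the definability of $f^{-1}(F)\to F\to\Gamma\backslash Y$ gives the claim. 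One small note: Remark (3) as stated in the paper concerns maps \emph{into} a quotient, whereas what you need here is the dual fact about maps \emph{out of} a quotient; that fact is correct, but the more accurate reference is the quotient description $\Gamma\backslash X = F/R_F$ from the proof of Proposition \ref{prop fun sets} rather than Remark (3).
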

\begin{proof}
First note that $\Gamma \cdot f^{-1}(F) = f^{-1}(\Gamma \cdot F) = f^{-1}(Y) = X$. On the other hand, the set $\{ \gamma \in \Gamma \, | \, \gamma \cdot f^{-1}(F)  \cap f^{-1}(F) \neq \varnothing \}$ is finite, since it is clearly a subset of $\{ \gamma \in \Gamma \, | \, \gamma \cdot F  \cap F \neq \varnothing \}$, and the latter is finite by assumption.
\end{proof}

\subsection{Fundamental sets for arithmetic groups}  Arithmetic quotients of reductive groups are endowed with $\bR_{\alg}$-definable structures using a Siegel set fundamental domain:

\begin{thm}[Theorem 1.1 of \cite{bkt}]\label{definable structure on arithmetic varieties}
Let $\bfG$ be a reductive algebraic group over $\bQ$, $\Gamma \subset \bfG(\bQ)$ an arithmetic subgroup and $M \subset \bfG(\bR)$ a compact subgroup. 
Then the quotient $\Gamma \backslash \bfG(\bR)/M $ admits a structure of $\bR_{\alg}$-definable analytic space, functorial in the triple $( \bfG, \Gamma, M)$ and characterized by the following property.
Let $\bfG(\bR) / M $ be endowed with its natural semi-algebraic structure and $\mathfrak{S} \subset \bfG(\bR) / M$ be an open semi-algebraic Siegel set.  Then $\mathfrak{S} \arrow  \Gamma \backslash \bfG(\bR)/M $ is $\bR_{\alg}$-definable.
\end{thm}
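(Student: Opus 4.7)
The plan is to apply Proposition \ref{prop fun sets} to a Siegel-set fundamental domain, then bootstrap to the characterizing property, uniqueness, and functoriality. Equip $\bfG(\bR)/M$ with its natural semi-algebraic analytic structure coming from the real algebraic structure on $\bfG$ and the fact that $M$ is a compact (hence algebraic) subgroup. By Borel--Harish-Chandra reduction theory, to a minimal rational parabolic $\bfP \subset \bfG$ with Langlands decomposition one associates an open Siegel set $\mathfrak{S} \subset \bfG(\bR)/M$; because $\mathfrak{S}$ is defined by explicit polynomial inequalities on the Iwasawa/root coordinates, it is manifestly semi-algebraic.

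By the fundamental theorem of reduction theory there exists a finite subset $C \subset \bfG(\bQ)$ such that $F := C \cdot \mathfrak{S}$ satisfies $\Gamma \cdot F = \bfG(\bR)/M$, and the classical Siegel property gives $\#\{\gamma \in \Gamma : \gamma F \cap F \neq \varnothing\} < \infty$. Thus $F$ is a semi-algebraic fundamental set for the $\Gamma$-action, and Proposition \ref{prop fun sets} yields a unique $\bR_{\alg}$-definable analytic structure on $\Gamma \backslash \bfG(\bR)/M$ making $F \arrow \Gamma \backslash \bfG(\bR)/M$ definable. Since $\mathfrak{S} \subset F$, the characterizing property in the statement holds immediately, and this characterization pins the structure down uniquely.

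Independence of the choice of $\mathfrak{S}$ (and of $C$) is handled via Remark (2) following Proposition \ref{prop fun sets}: any two Siegel sets $\mathfrak{S}, \mathfrak{S}'$ attached to possibly different minimal parabolics and bounds are mutually contained in finitely many $\bfG(\bQ)$-translates of one another, so after enlarging to $\Gamma$-fundamental sets via $C$ and $C'$ they are mutually contained in finitely many $\Gamma$-translates, giving the same definable quotient structure.

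For functoriality under a morphism $f : (\bfG_1, \Gamma_1, M_1) \arrow (\bfG_2, \Gamma_2, M_2)$, I would use that the definable structure on the target is characterized by definability of $F_2 \arrow \Gamma_2 \backslash \bfG_2(\bR)/M_2$, so $\bar{f}$ is definable iff its composition with $F_1 \arrow \Gamma_1 \backslash \bfG_1(\bR)/M_1$ is. The map $f|_{F_1}: F_1 \arrow \bfG_2(\bR)/M_2$ is semi-algebraic because $f$ is an algebraic morphism, and then definability of the projection to $\Gamma_2 \backslash \bfG_2(\bR)/M_2$ reduces, via Remark (3), to covering $f(F_1)$ by finitely many $\Gamma_2$-translates of $F_2$. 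This last point is the main obstacle: it rests on the classical fact (going back to Borel's reduction theory) that morphisms of reductive $\bQ$-groups send Siegel sets into finitely many $\Gamma$-translates of a Siegel set on the target. Once this is granted, all the pieces assemble into the stated functoriality; the reduction-theoretic inputs about the existence of Siegel sets, Borel--Harish-Chandra, and Siegel's finiteness property are standard, and their semi-algebraicity is immediate from the inequality-based definitions.
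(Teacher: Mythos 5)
The paper does not actually prove this statement: it cites it verbatim as Theorem 1.1 of \cite{bkt}, adding only the remark that the semisimple case treated there implies the reductive case. So there is no in-paper argument to compare against; what you have written is a sketch of the proof in \cite{bkt}.

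As such a sketch, your overall architecture matches the one in \cite{bkt}: use Borel--Harish-Chandra reduction theory to produce a finite union of Siegel-set translates as a semi-algebraic fundamental set, invoke the Siegel finiteness property to verify the hypotheses of Proposition~\ref{prop fun sets}, check well-definedness via commensurability of Siegel sets, and deduce functoriality from the behavior of Siegel sets under $\bQ$-morphisms. Two points deserve flagging. First, the step you correctly single out as ``the main obstacle'' --- that a $\bQ$-morphism of reductive groups carries a Siegel set into finitely many arithmetic translates of a Siegel set in the target --- is not an off-the-shelf ``classical fact.'' It is a theorem of Orr (``Height bounds and the Siegel property''), building on Borel and Garland--Raghunathan, and it is precisely the nontrivial technical input in the proof of functoriality in \cite{bkt}; attributing it loosely to classical reduction theory understates where the work lies. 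Second, you assert parenthetically that $M$ compact implies $M$ algebraic; this is correct (Chevalley: a compact subgroup of a real algebraic group is Zariski closed), but it is worth stating explicitly since it is what gives $\bfG(\bR)/M$ its semi-algebraic structure in the first place. With those two points made precise, the sketch is sound and follows the same route as the cited source.
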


Note that the statement in \cite{bkt} is for $\bfG$ semi-simple (and that is all we will need), although the reductive case easily follows.

\section{Background in mixed Hodge theory}

\subsection{Splittings}\label{splittings}
(cf. \cite[\S2]{cks}.)\\

Fix a field $K$ of characteristic zero and a finite-dimensional $K$-vector space $V$ equipped with an increasing filtration $\{W_k\}$. Note that any $K$-vector space obtained from $V$ using duals, tensor products and subspaces inherits an increasing filtration from $\{W_k\}$.

\begin{defn}
A splitting of $\{W_k\}$ is a direct sum decomposition $V = \bigoplus \limits_k V_k$ such that $W_l = \bigoplus \limits_{k \leq l} V_k$. 
\end{defn}

Let $\cS(W)$ denote the variety of all splittings of $\{W_k\}$. It is a smooth algebraic variety defined over $K$ (a Zariski-open in a product of Grassmanians) such that $\cS(W)(L)$ is the set of all splittings of $\{W_k \otimes_K L\}$ for every field $L \supseteq K$. \\

The natural left action of $\GL(V)$ on $V$ induces an algebraic left action of the $K$-algebraic group $\GL(V)^W = \{ g \in \GL(V) \, | \, g(W_k) \subseteq W_k \text{ for all } k \} $ on $\cS(W)$. Its unipotent radical is the $K$-algebraic subgroup $U := \exp (W_{-1} \End(V))$. One easily checks that for every field $L \supseteq K$ the group $U(L)$ acts simply transitively on $\cS(W)(L)$, cf. \cite[\S3.6]{Kostant} or \cite[Proposition 2.2]{cks}.\\

There is a natural closed immersion $\cS(W) \hookrightarrow W_0 \End(V)$ which on $\bar K$-points associates to any given splitting $V = \bigoplus_k V_k$ the semisimple endomorphism $T \in \End(V)$ with integral eigenvalues whose $l$-eigenspace is $V_l$. This realizes $\cS(W)$ as an affine subspace of $W_0 \End(V)$ directed by the subvector space $W_{-1} \End(V)$.  In this realization, the left action of $\GL(V)^W$ on $\cS(W)$ is induced by the adjoint action of $\GL(V)$ on $\End(V)$, and the $K$-algebraic group $U$ acts on $\cS(W)$ by affine transformations.\\

There is an exact sequence of $K$-algebraic groups
\[  1 \arrow U \arrow \GL(V)^W \arrow \GL(\Gr^W V) \arrow 1, \]
and the choice of a splitting $T \in \cS(W)(K)$ induces a section
$\GL(\Gr^W V) \arrow \GL(V)^W $, whose image we denote by $\GL(V)^T$. \\

\begin{notn}
In the sequel we will frequently identify a splitting of $W$ with the
corresponding semisimple endomorphism $T$ of $V$. 
\end{notn}

\subsection{Mixed Hodge structures}
(cf. \cite{Deligne-Hodge2}.)\\

A decreasing filtration $F$ of an object $V$ is said to be finite if there exists two integers $m$ and $n$ such that $F^m V = V$ and $F^n V = 0$, and similarly for increasing filtrations.
In what follows, all filtrations are implicitely supposed to be finite.\\

Let $R = \bZ, \bQ$ or $\bR$. A mixed $R$-Hodge structure is a triple $V = (V_R, W, F)$ consisting of
\begin{itemize}
\item a $R$-module $V_R$ of finite type,
\item an increasing filtration $\{W_k\}$ of $V_R$ (the weight filtration),
\item a decreasing filtration $\{F^p\}$ of $V_{\bC} := V_R \otimes_R \bC$ (the Hodge filtration),
\end{itemize}
such that $\Gr_F^p \Gr_{\bar F}^q \Gr_l^{W} \left( V_{\bC} \right) = \{0\}$ when $p + q \neq l$, where we denote by the same symbol $W$ the filtration induced by $W$ on $V_{\bC}$ and by $\bar F$ the conjugate filtration of $F$ defined by $\bar F^q := \overline{F^q}$. \\

A morphism of mixed $R$-Hodge structure is a $R$-linear morphism of the underlying $R$-modules that preserves both filtrations. The category of mixed $R$-Hodge structures is abelian, and it admits both duals and tensor products (hence internal homs).\\

Let $V = (V_R, W, F)$ be a pure $R$-Hodge structure of weight $n$, meaning that $ \Gr_l^{W} \left( V_R \right) = \{0\}$ when $l \neq n$. In that case, we have the Hodge decomposition
\[  V_{\bC} = \bigoplus_{p + q = n} V^{p,q}  \]
with $V^{p,q} := F^p \cap \bar F^q$, so that $V^{q,p} = \overline{ V^{p,q}}$. The Weil operator $C \in \End(V_{\bR})$ is then the real endomorphism satisfying
\[ C_ {\bC} = \bigoplus_{p,q} i^{p-q} \cdot \id_{V^{p,q}}.\]

Let $q : V_R \otimes V_R \rightarrow R$ be a $(-1)^n$-symmetric bilinear form---that is, $q$ is symmetric if $n$ is even and skew-symmetric if $n$ is odd. We say that the pure $R$-Hodge structure $V$ is polarized by $q$ if the hermitian form $h$ on $V_{\bC}$ defined by $h(u, v) = q_{\bC} (Cu, \bar v)$ is positive-definite and the Hodge decomposition of $V_{\bC}$ is $h$-orthogonal.

\subsection{Bigradings}\label{bigradings}
\begin{defn}
A bigrading of a real mixed Hodge structure $(V, W,F )$ is a direct sum decomposition $V_{\bC} = \bigoplus \limits_{p,q} J^{p,q}$ such that:
\begin{align*}
F^p = \bigoplus \limits_{ r \geq p,s} J^{r,s} \text{ and } (W_k)_{\bC} = \bigoplus \limits_{r+s \leq k} J^{r,s}.
\end{align*}
\end{defn}

The bigradings of a real mixed Hodge structure $(V,W,F)$ are easily seen to be in bijection with the splittings $T \in \cS(W_{\bC})$ such that $T( F^p) \subseteq F^p$, via $V_l(T) = \bigoplus \limits_{p+q = l} J^{p,q}$.

\begin{lem}[Deligne \cite{Deligne-Hodge2}]\label{Deligne bigrading}
If $(V, W,F )$ is a real mixed Hodge structure, then it admits a unique bigrading $\{I^{p,q} \}$ which satisfies:
\begin{equation*}
 I^{p,q} = \bar{I^{q,p}}  \mod  \bigoplus \limits_{r<p ,s<q} I^{r,s}  .
\end{equation*}
\end{lem}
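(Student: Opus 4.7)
I'd approach this by proving existence and uniqueness separately, using the equivalence recalled in \S\ref{bigradings} between bigradings of $(V,W,F)$ and splittings $T \in \cS(W_\bC)$ that preserve the Hodge filtration $F$.

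\textbf{Existence.} I would write down Deligne's explicit formula
\[ I^{p,q} := F^p \cap W_{p+q} \cap \Bigl( \overline{F^q} \cap W_{p+q} + \sum_{j\geq 1} \overline{F^{q-j}} \cap W_{p+q-j-1} \Bigr), \]
and verify the bigrading axioms by induction on the length of $W$. The central observation is that modulo $W_{p+q-1}$ the large parenthesis collapses to $\overline{F^q} \cap W_{p+q}$, since every summand with $j \geq 1$ lies in $W_{p+q-2}$. Hence $I^{p,q}$ projects onto the $(p,q)$-Hodge piece of the canonical decomposition of the pure weight $p+q$ Hodge structure on $\Gr^W_{p+q}V_\bC$. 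Inductively this gives both $V_\bC = \bigoplus I^{p,q}$ and the compatibilities with $F$ and $W$. The congruence $I^{p,q} \equiv \overline{I^{q,p}} \mod \bigoplus_{r<p,\,s<q} I^{r,s}$ follows by comparing Deligne's formula with the conjugate of the analogous formula for $I^{q,p}$: both differ from $F^p \cap \overline{F^q} \cap W_{p+q}$ only by correction terms that explicitly live in the claimed sum.

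\textbf{Uniqueness.} Suppose $\{I^{p,q}\}$ and $\{J^{p,q}\}$ both satisfy the symmetry condition, corresponding to splittings $T_I, T_J \in \cS(W_\bC)$ preserving $F$. By the simple transitivity of the $U_\bC$-action recalled in \S\ref{splittings}, there is a unique $u = \exp(\xi)$ with $\xi \in W_{-1}\End(V_\bC)$ carrying $T_I$ to $T_J$, and the $F$-preservation of both splittings forces $\xi \in F^0\End(V_\bC)$. Decomposing $\xi$ in the bigrading $I$ induces on $\End(V_\bC)$, only components $\xi^{a,b}$ of bidegree $(a,b)$ with $a \geq 0$ and $a+b \leq -1$---so in particular $b \leq -1$---can appear. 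On the other hand, substituting $J^{p,q} = u(I^{p,q})$ into the symmetry condition for $J$ and expanding using the symmetry condition for $I$ yields a relation of the form $(\xi - \overline{\xi})(I^{p,q}) \subseteq \bigoplus_{r<p,\,s<q} I^{r,s}$ modulo higher-order terms in $\xi$. Since $\overline{\xi}$ is supported in bidegrees $(a,b)$ with $a \leq -1$, a downward induction on total bidegree in $\End(V_\bC)$ peels off the lowest surviving component of $\xi$ and forces it to vanish. Hence $\xi = 0$ and $T_I = T_J$.

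The main obstacle, I expect, is the bookkeeping in the uniqueness step: cleanly interlocking the three constraints on $\xi$ (lying in $W_{-1}\End$, preserving $F$, and equating $\xi$ with $\overline{\xi}$ modulo strictly lower-bidegree terms) so that the induction on the weight filtration of $\End(V_\bC)$ actually closes. Everything else amounts to routine manipulation of Deligne's formula together with the pure Hodge decomposition on each $\Gr^W_k V_\bC$.
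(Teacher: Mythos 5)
The paper does not actually prove this lemma; it is cited directly from Deligne \cite{Deligne-Hodge2}, and the explicit formula is recorded just afterwards as a known fact. So your proposal must be judged on its own terms rather than against a proof in the paper.

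Your existence sketch is the standard one (Deligne's formula plus descending induction on the weight, using strictness to identify the image of $I^{p,q}$ in $\Gr^W_{p+q}$ with $F^p\cap\overline{F^q}$) and is fine as a sketch. The uniqueness argument, however, has a real gap at the very first reduction: you write $J^{p,q}=u(I^{p,q})$ for the unique $u\in U_\C$ carrying $T_I$ to $T_J$, but this does not follow merely from $\Ad(u)T_I=T_J$. The correspondence recalled in \S\ref{bigradings} between bigradings and $F$-preserving $W$-splittings is a surjection, not an injection: a fixed $T$ can support many bigradings. (Already for a pure weight-$2$ structure on $\C^2$ with $h^{2,0}=h^{0,2}=1$, the only $W$-splitting is the trivial one, yet $J^{0,2}$ may be taken to be any complement of $F^2$.) In particular, once you have reduced to $u=1$, you have only shown $T_I=T_J$, not $I^{p,q}=J^{p,q}$. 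What saves the argument is the following extra observation, which you need to make explicit: the congruence $J^{p,q}\equiv\overline{J^{q,p}}\bmod\bigoplus_{r<p,s<q}J^{r,s}$ holds a fortiori modulo $W_{p+q-1}$, so the image of $J^{p,q}$ in $\Gr^W_{p+q}V_\C$ lies in $F^p\cap\overline{F^q}$, and by a dimension count $J^{p,q}$ is exactly the preimage in $V_{p+q}(T_J)$ of the $(p,q)$-Hodge piece of $\Gr^W_{p+q}V_\C$. Thus a bigrading satisfying Deligne's symmetry is \emph{determined} by its associated $W$-splitting, and since $u$ is the identity on $\Gr^W$ this is precisely what gives $J^{p,q}=u(I^{p,q})$, after which $u(F^p)=F^p$ and the bidegree bookkeeping you outline does close (the minimal non-zero $W$-homogeneous piece $\xi_{-k}$ is killed because $\xi$ is concentrated in bidegrees $(a,b)$ with $a\geq0$, $b\leq-1$ while $\overline{\xi}$ is concentrated in $(a,b)$ with $a\leq-1$, $b\geq0$, so neither side can supply the $(0,-k)$ component). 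With that step filled in, your uniqueness argument is correct; it is, incidentally, a cleaner route than Deligne's original, which derives uniqueness by showing directly that the symmetry forces the explicit formula.
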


Deligne bigrading is functorial and is given explicitely by the formula:
\[ I^{p,q} := \left (F^p \cap (W_{p+q})_{\bC} \right) \cap \left( \bar F^q \cap (W_{p+q})_{\bC} + \sum \limits_{j \geq 0} \bar F^{q-1-j}\cap (W_{p+q-2-j})_{\bC} \right). \]

\subsection{Real splittings}

\begin{prop}
A real mixed Hodge structure $(V, W, F)$ is said to split over $\bR$ if it satisfies one of the equivalent following properties:
\begin{enumerate}
\item it is a direct sum of pure real Hodge structures of different weights,
\item it admits a real splitting, i.e. a bigrading $\{J^{p,q} \}$ such that $J^{p,q} = \bar J^{q,p}$,
\item there exists $T \in \cS_ {\bR}(W)$ such that $T(F^p) \subseteq F^p$.
\end{enumerate}
\end{prop}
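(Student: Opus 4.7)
The plan is to establish the cyclic chain of implications (1) $\Rightarrow$ (2) $\Rightarrow$ (3) $\Rightarrow$ (1); all three directions are formal consequences of the definitions and the bijection between bigradings and $F$-preserving splittings recalled at the start of \S3.3.

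For (1) $\Rightarrow$ (2), suppose $V_{\bR}=\bigoplus_l V_l$ as a direct sum of pure real Hodge structures, with $V_l$ of weight $l$. Each summand has a Hodge decomposition $(V_l)_{\bC}=\bigoplus_{p+q=l}V_l^{p,q}$ with $V_l^{p,q}=\overline{V_l^{q,p}}$. Setting $J^{p,q}:=V_{p+q}^{p,q}$ yields a bigrading of $(V,W,F)$ (the filtrations $W_k=\bigoplus_{l\le k}V_l$ and $F^p=\bigoplus_{r\ge p,s}V_{r+s}^{r,s}$ being immediate from the pure case), and the identity $J^{p,q}=\bar J^{q,p}$ is inherited termwise.

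For (2) $\Rightarrow$ (3), I would invoke the bijection from \S3.3 sending a bigrading $\{J^{p,q}\}$ of $(V,W,F)$ to the unique semisimple $T\in \cS(W_{\bC})$ with $l$-eigenspace $V_l(T)=\bigoplus_{p+q=l}J^{p,q}$ and with $T(F^p)\subseteq F^p$. I need to check that $T$ is defined over $\bR$, i.e.\ lies in $\cS_{\bR}(W)$: because complex conjugation on $V_{\bC}$ sends $J^{p,q}$ to $J^{q,p}$ by hypothesis, each eigenspace $V_l(T)$ is preserved by conjugation, hence is the complexification of a real subspace, which is exactly what it means for the semisimple endomorphism $T$ with integer eigenvalues to be real.

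For (3) $\Rightarrow$ (1), given $T\in\cS_{\bR}(W)$ with $T(F^p)\subseteq F^p$, I obtain a real direct sum decomposition $V_{\bR}=\bigoplus_l V_l(T)_{\bR}$ compatible with $W$, so that the projection $W_l\twoheadrightarrow \Gr^W_l V$ identifies $V_l(T)$ with $\Gr^W_l V$ as filtered real vector spaces (the $F$-filtration on $V_l(T)_{\bC}$ being the restriction of $F$, since $T$ preserves $F$ and $V_l(T)_{\bC}$ is a $T$-eigenspace). The mixed Hodge structure axiom $\Gr^p_F\Gr^q_{\bar F}\Gr^W_l V_{\bC}=0$ for $p+q\ne l$ is exactly the statement that $\Gr^W_l V$ is a pure real Hodge structure of weight $l$; transporting back, each $V_l(T)$ is pure of weight $l$, and the required decomposition is established.

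The only subtle step is (3) $\Rightarrow$ (1): it is essential that $T$ preserves $F$ (not merely that it splits $W$ over $\bR$) in order for the induced $F$-filtration on each eigenspace $V_l(T)_{\bC}$ to agree with the one transported from $\Gr^W_l V$; given this, invoking the mixed Hodge structure axiom is immediate, and no further analytic or Hodge-theoretic input is needed.
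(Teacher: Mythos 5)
The paper states this proposition without proof, treating it as a standard fact (cf.\ Cattani--Kaplan--Schmid or Deligne), so there is no in-paper argument to compare against. Your cyclic chain (1) $\Rightarrow$ (2) $\Rightarrow$ (3) $\Rightarrow$ (1) is correct and is the natural way to prove it; in (2) $\Rightarrow$ (3) you only need the easy direction of the \S3.3 correspondence (bigrading $\mapsto T$, which is plainly well-defined) plus the observation that $J^{p,q}=\bar J^{q,p}$ forces each eigenspace $V_l(T)=\bigoplus_{p+q=l}J^{p,q}$ to be conjugation-stable, and your (3) $\Rightarrow$ (1) correctly identifies $V_l(T)$ with $\Gr^W_l V$ as filtered real vector spaces using that $T$ preserves $F$.
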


 If $(V,W, F)$ admits a real splitting $\{J^{p,q} \}$, then one has necessarily 
 \[ J^{p,q} = F^p \cap \bar F^q \cap W_{p+q}, \]
 so that it is unique and coincides with Deligne bigrading.\\

\subsection{Graded-polarized mixed period domains}\label{sect period domains}
(cf. \cite{usui, pearlstein}.)\\

Let $V$ be a finite dimensional $\bR$-vector space equipped with an increasing filtration $\{W_k\}$ and a collection of non-degenerate bilinear forms $q_k : \Gr_k^W V \otimes_{\bR} Gr_k^W V \arrow \bR$ that are $(-1)^k$-symmetric. Fix a partition of $\dim_{\bR} V$ into non-negative integers $\{h^{p,q} \}$ such that $h^{p,q} = h^{q,p}$.\\

For any integer $k$, we denote by $\Omega_k$ the Griffiths period domain parametrizing all decreasing filtrations $\{ F_k^p \}_p$ on $\Gr_k^W V_{\bC}$ with $\dim_{\bC} F_k^p = \sum_{r \geq p} h^{r, k-r} $ that define a real pure Hodge structure of weight $k$ polarized by $q_k$, and by $\check \Omega_k$ its compact dual parametrizing the $(q_k)_{\bC}$-isotropic filtrations $\{ F_k^p \}_p$ on $\Gr_k^W V_{\bC}$ with $\dim_{\bC} F_k^p = \sum_{r \geq p} h^{r, k-r} $. Letting $\check \Omega := \prod_k \check \Omega_k$ and $ \Omega := \prod_k \Omega_k$, and denoting by $\bfH$ the real algebraic group $ \prod_k   \Aut(q_k)$, it follows from Griffiths theory that $\check \Omega $ is a smooth projective complex variety on which the complex algebraic group $ \bfH(\bC)$ acts transitively by algebraic automorphisms and $\Omega \subset \check \Omega$ is a real semi-algebraic open subset on which the real algebraic group $ \bfH(\bR)$ acts transitively by semi-algebraic automorphisms.\\

Let $\cM$ denote the corresponding mixed period domain, i.e. the set of decreasing filtrations $\{ F^p \}$ of $V_{\bC}$ such that $(V, W, F)$ is a real mixed Hodge structure graded-polarized by the $q_k$'s and such that 
\[ \dim_{\bC} \left((F^p \Gr_{p + q}^W V_{\bC}) \cap (\bar F^q \Gr_{p + q}^W V_{\bC}) \right)=  h^{p,q}. \]

By definition, $\cM$ is a semi-algebraic open subset of the smooth projective complex variety $\check \cM$ that parametrizes the decreasing filtrations $\{F^p\}$ of $V_{\bC}$ by complex vector subspaces such that the filtration induced on the graded pieces $\Gr_k^WV_\C$ is inside $\check\Omega_k$ for each $k$.

Let $\bfG$ denotes the real algebraic group defined as the preimage of $\bfH \subset \bfGL( \Gr^W V)$ through the natural homomorphism $\bfGL(V)^W \arrow \bfGL( \Gr^W V)$. Let $G$ denote the preimage of $\bfH(\bR)$ by the homomorphism $\bfG(\bC) \arrow \bfH(\bC)$. It is naturally a group object in the category of $\bR_{\alg}$-definable topological spaces, and the following inclusions hold in this category:
\[  \bfG(\bR) \subseteq G = \bfU(\bC) \cdot\bfG(\bR) \subseteq \bfG(\bC) .\]
Moreover the action of $\bfG(\bC)$ on $\check \cM$ induces an action
of $G$ on $\cM$. The following proposition is well-known, see
  \cite[Prop. 2.11]{usui} for instance:

\begin{prop}
 The real algebraic group $G$ acts transitively on $\cM$ by semi-algebraic automorphisms.
\end{prop}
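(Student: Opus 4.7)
The plan is to verify the three claims in sequence: that the action of $\bfG(\bC)$ on $\check\cM$ restricts to an action of $G$ on $\cM$, that this restricted action is semi-algebraic, and finally that it is transitive. Semi-algebraicity will be essentially a free consequence of the set-up, so the real content lies in the first and third points.

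For the fact that $G$ preserves $\cM$, pick $g \in G$ and $F \in \cM$. Since $g \in \bfG(\bC)$ it preserves the weight filtration $W$, so $(V_\bR, W, g\cdot F)$ satisfies the same Hodge-numerical dimensions one needs for $\cM$ provided one can show $g\cdot F$ induces a \emph{real} polarized pure Hodge structure of weight $k$ on each $\Gr_k^W V_\bC$: but by construction of $G$ the induced element $\bar g \in \bfH(\bC)$ lies in $\bfH(\bR)$, and $\bfH(\bR)$ preserves $\Omega = \prod_k \Omega_k \subseteq \prod_k \check\Omega_k$. The real-MHS splitting condition $\Gr_F^p \Gr_{\bar F}^q \Gr_l^W V_\bC = 0$ for $p+q\neq l$ depends only on the filtrations induced on the graded pieces, and these are pure Hodge structures of the correct weight, so the condition is automatic. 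Semi-algebraicity of the action is immediate: the action of $\bfG(\bC)$ on $\check\cM$ is algebraic as a map of complex algebraic varieties, $G \subseteq \bfG(\bC)$ and $\cM \subseteq \check\cM$ are semi-algebraic subsets, so the restriction $G \times \cM \to \cM$ is semi-algebraic in the sense required.

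For transitivity, given $F, F' \in \cM$, I would argue in two steps along the short exact sequence
\[
1 \arrow \bfU \arrow \bfG \arrow \bfH \arrow 1
\]
with $\bfU$ unipotent. First, the filtrations $\bar F$ and $\bar F'$ induced on $\Gr^W V_\bC$ both lie in $\Omega$, on which $\bfH(\bR)$ acts transitively by Griffiths' theorem. Since $\bfU$ is unipotent, $H^1(\bR, \bfU) = 0$, so $\bfG(\bR) \twoheadrightarrow \bfH(\bR)$ is surjective, and we may pick a lift $\tilde h \in \bfG(\bR) \subseteq G$ of an element of $\bfH(\bR)$ carrying $\bar F$ to $\bar F'$. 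After replacing $F$ by $\tilde h \cdot F$, we reduce to the case $\bar F = \bar F'$. Second, the fiber of $\check \cM \to \prod_k \check\Omega_k$ over a fixed graded filtration is a torsor under $\bfU(\bC)$ (using that $\cS(W_\bC)$ is a $\bfU(\bC)$-torsor and $F$ is determined by its pieces on the graded together with a choice of splitting compatible with $F$), so there exists a unique $u \in \bfU(\bC)$ with $u \cdot F = F'$. Since $\bfU(\bC) \subseteq G$ by the stated decomposition $G = \bfU(\bC) \cdot \bfG(\bR)$, this completes the proof.

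The only delicate point, and the one I would be most careful about, is the second step of the transitivity argument: verifying that the fiber of $\check\cM \to \prod_k \check\Omega_k$ through $F$ is acted on simply transitively by $\bfU(\bC)$. This is essentially the content of the torsor statement for splittings recalled in \S3.1, applied parameter-wise to the filtration data; the subtle issue is that one is not a priori given that a $\bfU(\bC)$-lift of $\bar F$ equal to $F'$ exists as an element of $\cM$ (rather than merely $\check\cM$), but this is forced by the assumption $F' \in \cM$ together with uniqueness of the lift in $\check\cM$. Given this, the decomposition $G = \bfU(\bC) \cdot \bfG(\bR)$ recorded just above the proposition is precisely what is needed to package both steps into a single element of $G$.
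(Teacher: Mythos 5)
Your proof is correct, and its skeleton is the same as the paper's: both arguments factor the transitivity claim through the projection $\cM \to \Omega$, use transitivity of $\bfH(\bR)$ on $\Omega$ for the base, and use the simply transitive $\bfU(\bC)$-action on splittings for the fibers. The paper packages this as a single surjective $G$-equivariant map $\Omega \times \cS(W)(\bC) \to \cM$ (surjectivity being the existence of bigradings) and cites \cite[Prop.~2.2]{cks} for transitivity on the source; your two-step argument along $1 \to \bfU \to \bfG \to \bfH \to 1$ is what that unpacks to, and the observation that $H^1(\bR,\bfU)=0$ makes $\bfG(\bR)\to\bfH(\bR)$ surjective is a clean way to justify the lift in step one.

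Two small points. First, you do not need (and your sketch does not prove) that the fiber of $\check\cM \to \check\Omega$ is a $\bfU(\bC)$-\emph{torsor}: the claimed uniqueness of $u$ fails to follow from your argument, since a given $F'$ has many compatible splittings $T'$. Transitivity of the $\bfU(\bC)$-action on the fiber is all that is used, and that is what your argument actually yields. Second, and more importantly, the parenthetical justification for that transitivity hides the real content: it requires knowing that \emph{every} $F$ inducing a given graded filtration admits a compatible splitting $T \in \cS(W_\bC)$ with $T(F^p)\subseteq F^p$. For $F\in\cM$ this is exactly the existence of a bigrading of the real mixed Hodge structure $(V,W,F)$, which the paper invokes as Deligne's bigrading lemma. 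You should cite that rather than treat it as a formal consequence of $\cS(W_\bC)$ being a torsor; once you do, the argument is complete.
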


\begin{proof}
Recall that $\cS(W)$ denotes the variety of splittings of $W$,
cf. section \ref{splittings}. The complex variety $\Omega \times
\cS(W)(\bC)$ parametrizes the elements of $\cM$ equipped with a
bigrading, cf. section \ref{bigradings}. Thanks to the existence of
Deligne bigrading, the natural map $ \Omega \times \cS(W)(\bC) \arrow
\cM$ is surjective. Since this map is also $G$-equivariant and the
$G$-action on $ \Omega \times \cS(W)(\bC) $ is 
transitive by \cite[Prop. 2.2]{cks}, it follows that $G$ acts transitively on $\cM$.
\end{proof}

Note that the morphism $ \cM \arrow  \Omega$ which is equivariant with respect to the homomorphism $G \arrow \bfH(\bR)$
is the restriction of a complex algebraic map $\check \cM \arrow \check \Omega$ which is equivariant with respect to the homomorphism $\bfG(\bC) \arrow \bfH(\bC)$.\\
 
Let $\cM_ {\bR} \subset \cM$ denote the subset consisting of those Hodge filtrations for which the corresponding mixed Hodge structure is split over $\bR$. The group $\bfG(\bR)$ acts transitively on $\cM_ {\bR}$, so that it is a smooth real semi-algebraic subset  of $\cM$. Moreover, $\cM_{\bR}$ is naturally in bijection with $\Omega \times \cS(W)(\bR)$, and this bijection is compatible with the $\bfG(\bR)$-actions, so that it is an isomorphism of real semi-algebraic spaces.\\

Observe that the action of $G$ on $\cM$ is not proper, since the stabilizer of a point is non-compact.
\begin{prop}
The actions of $\bfG(\bR)$ on $\cM_ {\bR}$ and $ \cM$ are proper.
\end{prop}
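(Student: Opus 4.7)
The plan is to first prove the statement for $\cM_\bR$ directly using the transitive $\bfG(\bR)$-action, and then transfer properness to all of $\cM$ by means of a natural $\bfG(\bR)$-equivariant continuous retraction $\pi : \cM \arrow \cM_\bR$. The underlying general principle is: if a locally compact group $G$ acts continuously on Hausdorff spaces $X$ and $Y$ and $\pi : X \arrow Y$ is a continuous $G$-equivariant map, then properness of the $G$-action on $Y$ implies properness on $X$, since for any compact $K \subseteq X$ the set $\{g \in G : gK \cap K \neq \varnothing\}$ is closed in $G$ and is contained in the compact set $\{g : g\pi(K) \cap \pi(K) \neq \varnothing\}$.

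For $\cM_\bR$, I use the identification $\cM_\bR \cong \Omega \times \cS(W)(\bR)$ and the transitivity of $\bfG(\bR)$ established above. Fix a basepoint $(F_{gr}, T)$. The splitting $T$ identifies $\bfG(\bR)^T \subseteq \bfG(\bR)$ with $\bfH(\bR)$, so the full stabilizer of $(F_{gr}, T)$ corresponds under this isomorphism to $\bfH(\bR)^{F_{gr}}$, the isotropy in $\bfH(\bR)$ of a point in the pure polarized period domain $\Omega$. The latter is compact by the classical theory of Griffiths, as it consists of isometries preserving each Hodge summand. Thus $\cM_\bR \cong \bfG(\bR)/K$ with $K$ compact, and the $\bfG(\bR)$-action on $\cM_\bR$ is proper.

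The required retraction $\pi : \cM \arrow \cM_\bR$ is furnished by Deligne's canonical real $\delta$-splitting (or equivalently the $\sl_2$-splitting singled out later in the paper): each $F \in \cM$ determines, via the Deligne bigrading of section~\ref{bigradings}, a canonical real element $\delta(F) \in (W_{-2}\End V)_\bR$, and one sets $\pi(F) := \exp(-i\delta(F)) \cdot F$, which lies in $\cM_\bR$. Functoriality of Deligne's bigrading under morphisms of mixed Hodge structures gives $\delta(gF) = \Ad(g)\delta(F)$ for $g \in \bfG(\bR)$, whence $\pi(gF) = g\pi(F)$; continuity (indeed real analyticity) is immediate from the explicit formula for $I^{p,q}$. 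Combined with the previous paragraph this yields properness on $\cM$.

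The principal technical point is the equivariance of $\pi$. Compactness of the stabilizer $\bfG(\bR)^F$ at an arbitrary $F \in \cM$ can be checked by a weight/type argument on $\End V$: any nonzero $X \in \bfU(\bR) \cap \bfG(\bR)^F$ would correspond to $X \in (W_{-1}\End V)_\bR \cap F^0 \End V_\bC \cap \bar F^0 \End V_\bC$, which must vanish since the Hodge type conditions $p \geq 0$, $q \geq 0$, $p + q \leq -1$ are incompatible. However, in the absence of transitivity of $\bfG(\bR)$ on $\cM$, compact stabilizers alone do not imply properness, so the existence of a canonical equivariant retraction onto the real-split locus is essential.
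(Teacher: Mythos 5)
Your proposal is correct, and the second half (transferring properness from $\cM_\bR$ to $\cM$ via a $\bfG(\bR)$-equivariant continuous retraction) is exactly the paper's argument, which also invokes the Deligne $\delta$-splitting of Proposition~\ref{delta-splitting}. The first half differs in presentation from the paper: where you explicitly compute the stabilizer of a basepoint $(F_{\mathrm{gr}},T)\in\Omega\times\cS(W)(\bR)$, identify it via the section $\bfG(\bR)^T\cong\bfH(\bR)$ with the isotropy group $\bfH(\bR)^{F_{\mathrm{gr}}}$ of a pure polarized Hodge structure (compact by Griffiths), and conclude $\cM_\bR\cong\bfG(\bR)/K$, the paper instead introduces the bundle $\cB_\bR$ of real Hodge frames, observes it is a $\bfG(\bR)$-torsor (hence the action on it is proper), and pushes properness forward along the surjective proper $\bfG(\bR)$-equivariant map $\cB_\bR\to\cM_\bR$ using Bourbaki's criterion. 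The two are close in spirit---the properness of $\cB_\bR\to\cM_\bR$ ultimately rests on the compactness of the same isotropy groups---but your version makes the compact stabilizer explicit rather than hiding it in the properness of the frame map, which makes it a bit more self-contained at the cost of being less slick. Two small remarks: the $\delta$-splitting and the $\sl_2$-splitting are \emph{not} the same retraction (they agree only on $\cM_\bR$), though either is equivariant and hence works here, so your parenthetical ``or equivalently'' should be ``or alternatively''; and your final paragraph on compactness of $\bfG(\bR)^F$ for general $F\in\cM$ is, as you note yourself, a digression that does not enter the proof (and as stated only treats the Lie algebra intersection with $\mathfrak u$), so it could be dropped without loss.
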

\begin{proof}
Let $\cB_{\bR}$ denote the set of real Hodge frames of mixed Hodge structures that are split over $\bR$. It is a $\bfG(\bR)$-torsor, hence the $\bfG(\bR)$-action on $\cB_{\bR}$ is proper. But the surjective and proper morphism $\cB_{\bR} \arrow \cM_{\bR}$ is $\bfG(\bR)$-equivariant, therefore the $\bfG(\bR)$-action on $\cM_{\bR}$ is proper too, cf. \cite[proposition 5.i) in TG III.29]{Bourbaki}. By \cite[proposition 5.ii) in TG III.29]{Bourbaki}, the properness of the action of $\bfG(\bR)$ on $\cM$ follows, once we know the existence of a $\bfG(\bR)$-equivariant continuous map $\cM \arrow \cM_{\bR}$, for which we can refer for example to proposition \ref{delta-splitting}.
\end{proof}

\begin{cor}
If $\Gamma$ is a discrete subgroup of $\bfG(\bR)$, then the induced action of $\Gamma$ on $\cM$ is proper and the quotient $\Gamma \backslash \cM$ admits a canonical structure of complex analytic space such that the natural map $\cM \arrow \Gamma \backslash \cM$
is holomorphic.
\end{cor}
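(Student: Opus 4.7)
The plan is to bootstrap the properness of the $\bfG(\bR)$-action established in the previous proposition to the properness of the $\Gamma$-action, and then invoke a standard construction of complex analytic quotients.

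First I would verify properness of the $\Gamma$-action. Since $\Gamma$ is a discrete subgroup of the Hausdorff topological group $\bfG(\bR)$, it is in particular a closed subgroup. The restriction of a proper action to a closed subgroup is proper (this is elementary: if $K \subset \cM$ is compact, then $\{\gamma \in \Gamma : \gamma K \cap K \neq \varnothing\} = \{g \in \bfG(\bR) : gK \cap K \neq \varnothing\} \cap \Gamma$, which is the intersection of a compact set with a closed discrete set, hence finite). Combined with the discreteness of $\Gamma$, this means the $\Gamma$-action on $\cM$ is properly discontinuous, all stabilizers $\Gamma_x$ are finite, and the quotient $\Gamma \backslash \cM$ with the quotient topology is Hausdorff and locally compact.

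Second I would equip the quotient with its complex analytic structure. Recall that $\cM$ is an open subset of the smooth complex projective variety $\check \cM$ on which $\bfG(\bC)$ acts by holomorphic automorphisms; since $\Gamma \subset \bfG(\bR) \subset G \subset \bfG(\bC)$ preserves $\cM$, it acts by biholomorphisms. The quotient of a complex manifold by a properly discontinuous group of biholomorphisms with finite stabilizers carries a canonical complex analytic space structure (with quotient singularities at fixed points): locally at a point $x \in \cM$ with stabilizer $\Gamma_x$, one can linearize the action in a $\Gamma_x$-invariant neighborhood $U$, arrange that distinct $\Gamma$-translates of $U$ are disjoint, and take the analytic quotient $\Gamma_x \backslash U$ as a chart on $\Gamma\backslash\cM$ via Cartan's theorem on finite group quotients of complex analytic spaces. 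By construction the projection $\cM \to \Gamma \backslash \cM$ is holomorphic, and the structure is characterized (hence canonical) by the universal property that a map out of $\Gamma \backslash \cM$ is holomorphic if and only if its pullback to $\cM$ is.

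There is essentially no hard obstacle here: once the previous proposition gives properness of the $\bfG(\bR)$-action, everything is a matter of checking that general complex-analytic quotient machinery applies. The only mildly delicate point is being careful that $\Gamma$ may have torsion, so that $\Gamma \backslash \cM$ is only a complex analytic space (possibly with quotient singularities) rather than a complex manifold; this is why the statement asks for an analytic space structure rather than a manifold structure.
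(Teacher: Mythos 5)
Your argument is correct and matches the approach the paper intends: the paper states this corollary without an explicit proof, treating it as an immediate consequence of the preceding proposition that the $\bfG(\bR)$-action on $\cM$ is proper. Restricting that proper action to the closed discrete subgroup $\Gamma$ gives proper discontinuity with finite stabilizers, and the standard Cartan quotient construction then equips $\Gamma\backslash\cM$ with its canonical complex analytic structure, with the projection holomorphic by construction.
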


 \subsection{The $\delta$-splitting}
 
 Given a real mixed Hodge structure $(V,W,F)$ with Deligne bigrading $\{I^{p,q} \}$, we define a nilpotent Lie subalgebra of $\End(V)_{\bC}$ by
 \[ L^{-1, -1}_ {(W,F)} = \{ X \in \End(V_ {\bC}) \, | \, X(I^{p,q}) \subseteq \bigoplus \limits_{r < p, s < q} I^{r,s}  \} . \] 
 
 It is defined over $\bR$ with real form $(L^{-1, -1}_{(W,F)})_\bR := L^{-1, -1}_ {(W,F)} \cap \End(V)$.
 
 \begin{prop}[Deligne, cf. proposition 2.20 in \cite{cks}]
Given a real mixed Hodge structure $(V,W, F)$, there exists a unique $\delta \in (L^{-1, -1}_ {(W,F)})_\bR$ such that $(V, W, e^{-i \delta} \cdot F)$ is a real mixed Hodge structure
which splits over $\bR$.
\end{prop}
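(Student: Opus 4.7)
My approach is to exploit the bigrading of $\End V_\bC$ induced by the Deligne bigrading $\{I^{p,q}\}$ of $(V,W,F)$. Writing $(\End V_\bC)^{a,b}$ for the subspace of endomorphisms sending $I^{p,q}$ into $I^{p+a,q+b}$ for all $p,q$, one has
\[ L^{-1,-1}_{(W,F)} = \bigoplus_{a,b \leq -1}(\End V_\bC)^{a,b}, \]
and the weight filtration on $L^{-1,-1}$ is given by the $W_{-2k}$-degree. I would first encode the failure of $(V,W,F)$ to split over $\bR$ as a canonical element $\eta \in L^{-1,-1}_{(W,F)}$: since by Lemma \ref{Deligne bigrading} $\overline{I^{q,p}}$ is the graph of a linear map $I^{p,q} \to \bigoplus_{r<p,s<q} I^{r,s}$, the collection of these graph maps assembles into $\eta$, whose vanishing is equivalent to $\{I^{p,q}\}$ being a real bigrading. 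The reality of the original MHS translates into a symmetry constraint on $\eta$ relative to complex conjugation on $\End V_\bC$.

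For existence, I would construct $\delta$ by weight induction, writing $\delta=\sum_{k\geq 1}\delta_{2k}$ with $\delta_{2k} \in (L^{-1,-1}_{(W,F)})_\bR \cap W_{-2k}\End V$. The key computation is that, modulo $W_{-2(k+1)}$, applying $e^{-i\delta_{2k}}$ to the current Hodge filtration changes the $W_{-2k}$-leading component of the non-reality obstruction additively by an explicit real-linear function of $\delta_{2k}$, while leaving the lower weights untouched. At each step one solves for $\delta_{2k}$ to cancel the leading obstruction; solvability over $\bR$ is ensured by the conjugation-symmetry of $\eta$ inherited from the reality of $(V,W,F)$. The weight filtration is finite, so the induction terminates after finitely many steps and yields the desired $\delta$.

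For uniqueness, suppose $\delta,\delta'$ both yield real-split transformed MHSs. Writing $\delta-\delta'=\sum_{k\geq k_0}\eta_{2k}$ with $\eta_{2k_0}$ the lowest-weight nonzero component, the same computation shows that the $W_{-2k_0}$-leading component of the non-reality obstruction of $(V,W,e^{-i\delta}F)$ differs from that of $(V,W,e^{-i\delta'}F)$ by the image of $\eta_{2k_0}$ under an injective real-linear map. Since both obstructions vanish by the real-splitness assumption, $\eta_{2k_0}=0$, a contradiction; hence $\delta=\delta'$.

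The main obstacle is the inductive step in the existence argument: verifying that the real-linear map sending $\delta_{2k}\in (L^{-1,-1}_{(W,F)})_\bR$ to the change it induces in the $W_{-2k}$-leading obstruction is a bijection onto the relevant obstruction space, and checking that the obstruction one needs to kill lies in its image. Both points reduce to a finite-dimensional real-linear algebra computation in $\End V_\bC$, relying crucially on the defining property of the Deligne bigrading from Lemma \ref{Deligne bigrading} and on the precise way complex conjugation interacts with the bigraded decomposition of $\End V_\bC$.
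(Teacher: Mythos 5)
The paper gives no proof of this proposition; it is simply cited to Cattani--Kaplan--Schmid (\cite{cks}, Proposition~2.20), so there is no in-text argument to compare against. Judged on its own terms, your sketch follows the standard strategy (encode the non-reality of the Deligne bigrading as an obstruction in $L^{-1,-1}_{(W,F)}$ and kill it by a weight-descending induction using the simply transitive action of the lower-triangular unipotent), but there are two genuine gaps.

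First, the decomposition $\delta=\sum_{k\geq 1}\delta_{2k}$ with $\delta_{2k}$ supported in weight $-2k$ is not correct: $L^{-1,-1}_{(W,F)}=\bigoplus_{a,b\leq -1}(\End V_\bC)^{a,b}$ has components in \emph{every} weight $\leq -2$, odd as well as even (e.g.\ $(\End V_\bC)^{-1,-2}$ sits in weight $-3$), and reality of $\delta$ only pairs $(a,b)$ with $(b,a)$, which does not force evenness. The induction must therefore go down one weight at a time (starting at $-2$), not two; otherwise the odd-weight part of the obstruction is never addressed.

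Second, and more seriously, the technical heart of the argument---that modulo lower weight the leading obstruction changes by an explicit $\bR$-linear image of $\delta_k$, and that this linear map is a bijection onto the relevant obstruction space (with the target obstruction lying in the image because of the $\bar I^{q,p}\equiv I^{p,q}$ symmetry)---is stated but nowhere carried out. You explicitly flag this as ``the main obstacle,'' but then declare it reduces to a ``finite-dimensional real-linear algebra computation'' without doing the computation. This is precisely where the proof lives: one must produce and analyze the map, identify the conjugation symmetry that the obstruction satisfies, and show surjectivity onto the symmetric part and injectivity on $(L^{-1,-1})_\bR$ of the given weight. Uniqueness rests on the same unproved injectivity. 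As written, this is a plausible roadmap, not a proof.

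A smaller point to watch: you implicitly use that the Deligne bigrading of $(V,W,e^{-i\delta}F)$ is $\{e^{-i\delta}I^{p,q}\}$. This is true when $\delta\in L^{-1,-1}_{(W,F)}$ (check that the defining congruence $\overline{e^{-i\delta}I^{q,p}}\equiv e^{-i\delta}I^{p,q}$ modulo $\bigoplus_{r<p,s<q}e^{-i\delta}I^{r,s}$ follows from $e^{2i\delta}\equiv\mathrm{id}$ modulo lowering operators and Deligne's lemma applied to $I^{p,q}$), but it should be verified explicitly rather than assumed, since the inductive update of the obstruction depends on it.
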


This splitting is functorial ($\delta$ commutes with every morphism of real mixed Hodge structures) and satisfies $L^{-1, -1}_ {(W,F)} = L^{-1, -1}_ {(W, e^{-i \delta} \cdot F)} $.

\begin{prop}[{\cite[proposition 2.24]{cks}}]\label{delta-splitting}
The Deligne $\delta$-splitting yields a $\bfG(\bR)$-equivariant smooth real semi-algebraic retraction $\cM \arrow \cM_ {\bR}$ of the inclusion $\cM_ {\bR} \subseteq \cM$ (over $\Omega$).
\end{prop}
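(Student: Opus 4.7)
Define $r \colon \cM \to \cM_{\bR}$ by $r(V,W,F) := (V, W, e^{-i\delta(W,F)} \cdot F)$, where $\delta(W,F) \in (L^{-1,-1}_{(W,F)})_{\bR}$ is the unique element produced by the preceding proposition. By construction the image lies in $\cM_{\bR}$, and $r$ is a retraction: on $\cM_{\bR}$ the filtration $F$ itself already splits over $\bR$, so uniqueness forces $\delta = 0$ and $r = \id$ on $\cM_{\bR}$. Equivariance under $\bfG(\bR)$ is a formal consequence of the functoriality of Deligne's $\delta$-splitting under morphisms of real mixed Hodge structures: for $g \in \bfG(\bR)$, the automorphism $g$ identifies the real MHS $(V,W,F)$ with $(V,W, g \cdot F)$, so $\delta(W, g\cdot F) = \Ad(g)\,\delta(W,F)$, whence $r(g\cdot (V,W,F)) = g \cdot r(V,W,F)$. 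Compatibility with the projection to $\Omega$ is automatic: since $L^{-1,-1}_{(W,F)} \subseteq W_{-2}\End(V)_{\bC}$, the unipotent operator $e^{-i\delta}$ acts trivially on each $\Gr_k^W V$, so $e^{-i\delta}\cdot F$ and $F$ induce the same filtration on the associated graded.

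It remains to check that $r$ is smooth and $\bR_{\alg}$-semi-algebraic, which I expect to be the main technical issue since the previous proposition only produces $\delta$ set-theoretically. The strategy is to factor $r$ through the Deligne bigrading. On $\cM$ the Hodge numbers $h^{p,q}$ are prescribed, hence the subspaces $F^p \cap W_{p+q,\bC}$, $\bar F^q \cap W_{p+q,\bC}$, and $\bar F^{q-1-j} \cap W_{p+q-2-j,\bC}$ appearing in Deligne's explicit formula
\[ I^{p,q} = (F^p \cap W_{p+q,\bC}) \cap \Bigl(\bar F^q \cap W_{p+q,\bC} + \sum_{j\geq 0} \bar F^{q-1-j} \cap W_{p+q-2-j,\bC}\Bigr) \]
all have locally constant dimension. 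Intersection, sum, and complex conjugation of subspaces of fixed dimension in $V_{\bC}$ (viewed as a real vector space) are all $\bR_{\alg}$-definable smooth maps of Grassmannians, so $(W,F) \mapsto \{I^{p,q}\}$ is a smooth $\bR_{\alg}$-semi-algebraic map from $\cM$ to a product of real Grassmannians.

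Next I would argue that passing from the bigrading $\{I^{p,q}\}$ to $\delta$ is itself $\bR_{\alg}$-definable and smooth. The element $\delta$ is determined, as in \cite[Prop.~2.20]{cks}, by the unique solution of a finite system of linear equations expressing that the $(p,q)$- and $(q,p)$-components of the new bigrading $e^{-i\delta} \cdot I^{\bullet,\bullet}$ are complex conjugate: setting up this system weight by weight gives $\delta$ as a universal polynomial in the entries of the components of the anti-holomorphic deviation $\overline{I^{q,p}} - I^{p,q}$ modulo $\bigoplus_{r<p,s<q} I^{r,s}$. Since $\delta$ is nilpotent, $e^{-i\delta}$ is a polynomial in $\delta$ of bounded degree, so finally $r(V,W,F) = e^{-i\delta(W,F)}\cdot F$ is a smooth real semi-algebraic function of $(W,F)\in \cM$. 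Combining the four bullets gives the proposition.
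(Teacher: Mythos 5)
The paper does not supply a proof of this proposition: it is stated with a bare citation to \cite[Prop.\ 2.24]{cks}, so there is no in-text argument to compare against. Your reconstruction is correct and is essentially the argument one would extract from CKS. The retraction property ($\delta=0$ on $\cM_\bR$ by uniqueness), $\bfG(\bR)$-equivariance (functoriality of the $\delta$-splitting), and compatibility with the projection to $\Omega$ (since $L^{-1,-1}_{(W,F)}\subseteq W_{-2}\End(V)_\C$, so $e^{-i\delta}$ acts trivially on $\Gr^W$) are all right, as is the observation that the explicit bigrading formula is semi-algebraic and smooth on $\cM$ because the $h^{p,q}$ are fixed, making every intersection and sum appearing in it have constant dimension.

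One small imprecision worth fixing: the passage from $\{I^{p,q}\}$ to $\delta$ is not a ``finite system of linear equations.'' The defining condition is that $T=\Ad(e^{-2i\delta})\bar T$ where $T$ is the Deligne grading (equivalently, $e^{-2i\delta}\cdot I^{p,q}=\overline{I^{q,p}}$), which is a polynomial equation in $\delta$. What makes it tractable is that $\delta\in L^{-1,-1}_{(W,F)}$ is nilpotent, so decomposing $\delta=\sum_{a,b\leq -1}\delta^{a,b}$ into $I$-bigraded components and solving recursively in $-(a+b)$ yields a finite triangular system, each step of which \emph{is} linear given the lower-order terms. This is exactly the recursion behind CKS Prop.\ 2.20 and gives $\delta$ as a universal polynomial in the entries of $T$ and $\bar T$, hence a smooth semi-algebraic function of $(W,F)\in\cM$. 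With that clarification your proof is complete.
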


 \subsection{The $\sl_2$-splitting, aka canonical splitting, aka $\xi$-splitting}

\begin{thm}[Deligne, cf. {\cite[Theorem 2.18]{bp}}]
The $\sl_2$-splitting is the unique, functorial splitting of real mixed Hodge structures which is given by universal Lie polynomials in the Hodge components of the Deligne $\delta$-splitting such that if $(\exp(z N) \cdot F, W)$ is an admissible nilpotent orbit with limit mixed Hodge structure $(F, M)$ which is split over $\bR$ then the Deligne grading of the splitting of $(\exp i N \cdot F, W)$ is a morphism of type $(0,0)$ for $(F, M)$.
\end{thm}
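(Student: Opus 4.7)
The plan is to give an explicit construction of the $\sl_2$-splitting, verify the normalizing condition on nilpotent orbits, and then deduce uniqueness from that condition. For the construction, given a real mixed Hodge structure $(V,W,F)$, write $\delta \in (L^{-1,-1}_{(W,F)})_\bR$ for the Deligne correction and decompose $\delta = \sum_{p,q \ge 1} \delta^{-p,-q}$ along the Hodge bigrading of the $\bR$-split mixed Hodge structure $(V,W,e^{-i\delta}\cdot F)$. I would inductively define a universal Lie polynomial $\xi = \xi(\delta^{-p,-q})$ in these Hodge components, with $\xi \in L^{-1,-1}_{(W,e^{-i\delta}\cdot F)}$, by the recursive CKS formula (cf.\ \cite[\S2]{cks}); setting the candidate splitting to be the Deligne bigrading of $(V,W,e^{-i\delta}\cdot F)$ transported back to $(V,W,F)$ by $e^{-\xi}$ yields a functorial splitting expressed by a universal Lie polynomial in the Hodge components of $\delta$.

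Next I would verify the normalizing condition. Let $(\exp(zN)\cdot F, W)$ be an admissible nilpotent orbit with $\bR$-split limit mixed Hodge structure $(F,M)$, so that $V_\bC = \bigoplus I^{p,q}$ under Deligne's bigrading of $(F,M)$ and $N \in L^{-1,-1}_{(F,M)}$. By Schmid's nilpotent orbit theorem together with admissibility, $(\exp(iN)\cdot F, W)$ is a mixed Hodge structure, and the key computation is to expand its Deligne $\delta$-splitting as a universal Lie expression in $N$ and the data of the attached $\sl_2$-triple; the recursive defining equation of $\xi$ then translates exactly into the statement that the correction $e^{-\xi}$ kills all non-$(0,0)$ components of $\delta$ relative to the $(F,M)$-bigrading, so that the candidate splitting $T_{\sl_2}$ of $(\exp(iN)\cdot F, W)$ preserves each $I^{p,q}$, i.e.\ is a morphism of type $(0,0)$ for $(F,M)$.

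For uniqueness, suppose $T'$ is any other functorial splitting given by a universal Lie polynomial in the $\delta^{-p,-q}$ and satisfying the normalizing condition. Since $T'$ and $T_{\sl_2}$ are two splittings of the same $W$, one has $T' = e^\eta \cdot T_{\sl_2}$ for a universal Lie polynomial $\eta$ in the $\delta^{-p,-q}$ lying in $L^{-1,-1}$. Evaluating both splittings on sufficiently many admissible nilpotent orbits with $\bR$-split limit mixed Hodge structure, and using that any symbolic monomial in the $\delta^{-p,-q}$ is realized by the Hodge components of $\delta$ associated to such an orbit, the common type-$(0,0)$ condition for $(F,M)$ forces each component of $\eta$ to vanish identically as a universal polynomial. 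The main obstacle is the second step: to identify the Hodge components of $\delta$ for $(\exp(iN)\cdot F, W)$ precisely enough to match the recursive definition of $\xi$; this requires the full strength of the mixed $\SL_2$-orbit theorem of \cite{knu} together with careful Lie-algebraic bookkeeping, whereas the rest of the argument is essentially formal.
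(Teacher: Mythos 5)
The paper does not prove this theorem; it is quoted verbatim as a known result, with the citation \cite[Theorem 2.18]{bp}, where Brosnan--Pearlstein attribute it to an unpublished 1993 letter of Deligne to Cattani and Kaplan (with the admissibility/nilpotent-orbit normalization developed further by Kato--Nakayama--Usui). So there is no internal proof in this paper for your sketch to be compared against; it is taken as a black box.

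Taking your sketch on its own terms: the overall outline (CKS recursion defining $\xi$ by universal Lie polynomials in the $\delta^{-p,-q}$, normalization checked on $\bR$-split one-variable nilpotent orbits, uniqueness by evaluating on nilpotent orbits) does follow the shape of the genuine argument. Two caveats. First, in the construction step the phrase ``transported back to $(V,W,F)$ by $e^{-\xi}$'' is slightly off: the $\delta$-split $(V,W,e^{-i\delta}F)$ and the original MHS share the same weight filtration $W$, so one is simply applying $\mathrm{Ad}(e^{-\xi})$ to the (already real) Deligne grading of the $\delta$-split to get another real splitting of the same $W$; there is no change of filtration to transport across. Second, and more seriously, the uniqueness step rests on the assertion that ``any symbolic monomial in the $\delta^{-p,-q}$ is realized by the Hodge components of $\delta$ associated to such an orbit.'' This is not literally true—the $\delta^{-p,-q}$ arising from graded-polarized nilpotent orbits satisfy nontrivial polarization, reality, and bracket constraints—and it is precisely here that the actual proof has to work: one must show that the type-$(0,0)$ normalization still pins down the universal Lie polynomial degree by degree despite these constraints. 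As you correctly flag, matching the $\delta$-components of $(\exp(iN)\cdot F,W)$ against the recursion requires the full strength of the mixed $\SL_2$-orbit theorem; without that input the sketch is a plausible outline rather than a proof.
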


\begin{cor}
The $\sl_2$-splitting yields a $\bfG(\bR)$-equivariant smooth real semi-algebraic retraction $r:\cM \arrow \cM_ {\bR}$ of the inclusion $\cM_ {\bR} \subseteq \cM$ (over $\Omega$).
\end{cor}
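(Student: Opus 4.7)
The plan is to reduce the corollary to Proposition \ref{delta-splitting} and the theorem just stated, by expressing the $\sl_2$-retraction as a further real-algebraic correction to the $\delta$-splitting retraction $r_\delta : \cM \to \cM_\bR$. Concretely, for $(F,W) \in \cM$ with Deligne bigrading $\{I^{p,q}\}$ and associated Deligne element $\delta \in (L^{-1,-1}_{(W,F)})_\bR$, the theorem produces universal Lie polynomials in the Hodge components $\{\delta^{r,s}\}$ giving an element $\xi = \xi(\delta^{r,s}) \in (L^{-1,-1}_{(W,F)})_\bR$, and I would define $r(F,W) := (e^{-\xi}\cdot F, W)$.

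First I would verify that $r$ is a well-defined retraction. By the defining property of the $\sl_2$-splitting the pair $(e^{-\xi}\cdot F,W)$ lies in $\cM_\bR$. If $(F,W) \in \cM_\bR$ already, then Deligne's bigrading satisfies $I^{p,q} = \overline{I^{q,p}}$, which forces $\delta=0$; then every Hodge component $\delta^{r,s}$ vanishes, hence $\xi=0$ and $r$ is the identity. Equivariance under $\bfG(\bR)$ follows from the functoriality asserted in the theorem applied to $\bfG(\bR)$-translates, since elements of $\bfG(\bR)$ act by morphisms of real graded-polarized mixed Hodge structures. The map lies over $\Omega$ because $e^{-\xi} \in \exp(W_{-1}\End(V_\bC))$ acts trivially on $\Gr^W V$, leaving the image in $\Omega$ unchanged.

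The substantive content is smoothness and semi-algebraicity. Lemma \ref{Deligne bigrading} gives an explicit finite formula for $I^{p,q}$ in terms of intersections and sums of $F^p$, $\bar F^q$, and $(W_\bullet)_\bC$, so the Deligne bigrading depends smoothly and real-algebraically on $(F,W)$. Proposition \ref{delta-splitting} packages the fact that $\delta$ itself depends smoothly and semi-algebraically on $(F,W)$; projecting against the $I^{p,q}$ then gives the Hodge components $\delta^{r,s}$ as smooth semi-algebraic functions. Applying the universal Lie polynomials yields $\xi$ as a polynomial, hence semi-algebraic, function of $(F,W)$. Finally, since $\xi$ is nilpotent of uniformly bounded order, $e^{-\xi}$ is a polynomial in $\xi$, so the map $r$ is a composition of smooth semi-algebraic operations.

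The step I expect to be most delicate is ensuring semi-algebraic, rather than merely real-analytic, dependence of $\delta$ (and hence of its Hodge components) on $(F,W)$. The definition of $\delta$ is implicit---it is the unique element making $e^{-i\delta}\cdot F$ lie in $\cM_\bR$---and so does not come with a manifest polynomial formula. However, this is precisely what Proposition \ref{delta-splitting} asserts; once that is granted, together with the explicit nature of the universal Lie polynomials in the $\sl_2$-correction, the rest of the argument is a formal transcription of the proof of Proposition \ref{delta-splitting}.
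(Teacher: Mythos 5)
There is a real slip in your formula for the retraction, and it undercuts the central claim. You set $\xi \in (L^{-1,-1}_{(W,F)})_\bR$, which is a \emph{real} nilpotent, so $e^{-\xi}$ lies in $\bfU(\bR)\subset \bfG(\bR)$. But the $\bfG(\bR)$-action preserves $\cM_\bR$ and its complement in $\cM$, so $(e^{-\xi}\cdot F, W)$ is split over $\bR$ if and only if $(F,W)$ already was. Your claim that ``by the defining property of the $\sl_2$-splitting the pair $(e^{-\xi}\cdot F,W)$ lies in $\cM_\bR$'' is therefore false as stated. The $\sl_2$-splitting is a real correction applied \emph{after} the $\delta$-splitting: the correct formula is
\[
r(F,W)=\bigl(e^{-\xi}\, e^{-i\delta}\cdot F,\ W\bigr),
\]
i.e.\ $r = r_\xi\circ r_\delta$ where $r_\delta$ is the (non-real) $\delta$-correction landing in $\cM_\bR$ and $r_\xi$ is the further real correction by $e^{-\xi}$. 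Indeed your opening sentence (``expressing the $\sl_2$-retraction as a further real-algebraic correction to $r_\delta$'') announces exactly this, but the concrete formula then silently drops the $e^{-i\delta}$ factor.

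Once that is fixed, your remaining verifications go through and match the intended argument (the paper itself treats the corollary as immediate from the theorem together with Proposition~\ref{delta-splitting}): vanishing of $\delta$ on $\cM_\bR$ forces $\xi=0$ and hence the retraction property; functoriality of the $\sl_2$-splitting gives $\bfG(\bR)$-equivariance; both $e^{-i\delta}$ and $e^{-\xi}$ lie in $\exp(W_{-1}\End(V_\C))$ and so act trivially on $\Gr^W V$, giving compatibility over $\Omega$; and semi-algebraicity of $\delta$ (from Proposition~\ref{delta-splitting} together with the explicit formula of Lemma~\ref{Deligne bigrading}) combined with the universal Lie polynomial expression for $\xi$ and nilpotency of the exponentials yields smoothness and semi-algebraicity of $r$.
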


 \subsection{The definable structure on arithmetic quotients of period domains}\label{definable structure on mixed period domains}
 In the following proposition we continue to identify $\cM_\bR=\Omega\times \cS(W)(\bR)$.

 \begin{prop}\label{prop def struct real split}  Let $\Gamma\subset \bfG(\bQ)$ be an arithmetic subgroup.  Then $\Gamma\backslash \cM_\bR$ admits a structure of a $\bR_{\alg}$-definable analytic space characterized by the following property:  for any semi-algebraic Siegel set $\mathfrak{S}\subset\Omega$ and bounded semi-algebraic $\Sigma\subset \cS(W)(\bR) $, the map $\mathfrak{S}\times\Sigma\to \Gamma\backslash\cM_\bR$ is $\bR_{\alg}$-definable.
\end{prop}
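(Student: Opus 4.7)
The plan is to construct a product fundamental set for the $\Gamma$-action on $\cM_\bR$ from a Siegel set in $\Omega$ and a bounded fundamental domain in $\cS(W)(\bR)$, then invoke Proposition~\ref{prop fun sets}. Let $\bfU\subset\bfG$ be the unipotent radical, so that $\bfG/\bfU\cong\bfH$ and $\bfU(\bR)$ acts simply transitively by affine transformations on $\cS(W)(\bR)$ (cf.~\S\ref{splittings}). Set $\Gamma_H:=\operatorname{image}(\Gamma\to\bfH(\bQ))$ and $\Gamma_U:=\Gamma\cap\bfU(\bQ)$: the former is arithmetic in $\bfH(\bQ)$, and the latter is a cocompact lattice in $\bfU(\bR)$ since $\bfU$ is unipotent. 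Fix a bounded semi-algebraic fundamental domain $\Sigma_0\subset\cS(W)(\bR)$ for $\Gamma_U$, and a semi-algebraic Siegel set $\mathfrak{S}_0\subset\Omega$, which is a fundamental set for the $\Gamma_H$-action on $\Omega$ by Theorem~\ref{definable structure on arithmetic varieties}.

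First I would show that $F_0:=\mathfrak{S}_0\times\Sigma_0\subset\Omega\times\cS(W)(\bR)=\cM_\bR$ is a fundamental set for the $\Gamma$-action. The projection $\cM_\bR\to\Omega$ is $\Gamma\to\Gamma_H$-equivariant and $\Gamma_U$ acts trivially on $\Omega$, so surjectivity $\Gamma\cdot F_0=\cM_\bR$ reduces to a two-step argument: given $(F_1,T_1)\in\cM_\bR$, first choose $\gamma\in\Gamma$ with $\bar\gamma F_1\in\mathfrak{S}_0$, then $u\in\Gamma_U$ with $u\gamma T_1\in\Sigma_0$ (the second step preserves the first coordinate). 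For the Siegel-type finiteness $|\{\gamma:\gamma F_0\cap F_0\neq\emptyset\}|<\infty$, projecting to $\Omega$ forces $\bar\gamma$ to lie in the finite set $\{h\in\Gamma_H:h\mathfrak{S}_0\cap\mathfrak{S}_0\neq\emptyset\}$; for each such $h$, fixing a lift $\gamma_h\in\Gamma$, the remaining ambiguity is $\{u\in\Gamma_U:u\Sigma_0\cap\gamma_h^{-1}\Sigma_0\neq\emptyset\}$, which is finite because $\Gamma_U$ acts properly discontinuously on $\cS(W)(\bR)$ and both sets are bounded. Proposition~\ref{prop fun sets} then endows $\Gamma\backslash\cM_\bR$ with an $\bR_{\alg}$-definable analytic structure $\tau_0$ making $F_0\to\Gamma\backslash\cM_\bR$ definable.

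For the characterization, given any other semi-algebraic Siegel set $\mathfrak{S}$ and bounded semi-algebraic $\Sigma\subset\cS(W)(\bR)$, standard Siegel-set theory yields that $\mathfrak{S}$ is contained in finitely many $\Gamma_H$-translates of $\mathfrak{S}_0$; likewise, since $\Sigma$ is bounded and $\Gamma_U\backslash\bfU(\bR)$ is compact (with $\bfU(\bR)$ acting by affine transformations), $\Sigma$ is covered by finitely many $\Gamma_U$-translates of $\Sigma_0$. Hence $\mathfrak{S}\times\Sigma$ is contained in finitely many $\Gamma$-translates of $F_0$, so the map $\mathfrak{S}\times\Sigma\to(\Gamma\backslash\cM_\bR,\tau_0)$ factors through finitely many $\bR_{\alg}$-definable pieces each mapping via a $\Gamma$-translate of $F_0\to\Gamma\backslash\cM_\bR$, and is therefore $\bR_{\alg}$-definable. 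Uniqueness of a definable structure with this property is immediate from Proposition~\ref{prop fun sets} applied to $F_0$ (itself of the form $\mathfrak{S}_0\times\Sigma_0$), since any such structure makes $F_0\to\Gamma\backslash\cM_\bR$ definable and hence equals $\tau_0$.

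The main obstacle is the fundamental-set verification for $F_0$: because $\Gamma$ is an extension $1\to\Gamma_U\to\Gamma\to\Gamma_H\to 1$ rather than a product, one must carefully track how $\Gamma$-elements distribute between the two factors, and the Siegel-finiteness property for the product crucially relies on the fact that the $\bfU(\bR)$-action on $\cS(W)(\bR)$ is by affine transformations---which is what ensures that boundedness of the $\cS(W)(\bR)$-fundamental domain interacts cleanly with the proper discontinuity of $\Gamma_U$.
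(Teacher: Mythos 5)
Your proof takes essentially the same approach as the paper: form the product fundamental set $F\times B$ from a Siegel set in $\Omega$ and a bounded semi-algebraic $\Gamma_U$-fundamental domain in $\cS(W)(\bR)$, apply Proposition~\ref{prop fun sets}, and verify the characterizing property by showing any $\mathfrak{S}\times\Sigma$ meets only finitely many $\Gamma$-translates of $F\times B$ via the $\Gamma_U\to\Gamma\to\Gamma_H$ reduction. You spell out the fundamental-set verification and the uniqueness in slightly more detail, but the argument is the same.
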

\begin{proof}
Let $\bfU$ be the unipotent radical of $\bfG$, $\Gamma_\bfU:=\Gamma\cap \bfU(\Q)$, and $\Gamma_\bfH$ the image of $\Gamma$ in $\bfH(\Q)$.  By \cite[Prop. 2.2]{cks}, $\bfU(\bR)$ acts simply transitively on $\cS(W)(\bR)$.  Taking $B\subset \cS(W)(\bR)$ to be a bounded semi-algebraic fundamental set for the cocompact action of $\Gamma_\bfU$ and $F$ to be a definable fundamental set for $\Gamma_\bfH\backslash\Omega$, we use $F\times B$ as a definable fundamental set to induce the definable structure on $\Gamma\backslash\cM_\bR$ via Proposition \ref{prop fun sets}. 
Let $\mathfrak{S}\subset\Omega$ be any semi-algebraic Siegel set and $\Sigma\subset \cS(W)(\bR) $ be any bounded semi-algebraic subset. Then $\mathfrak{S}$ meets only finitely many $\Gamma_\bfH$-translates of $F$, and for any any $\gamma\in \bfG(\Q)$ we have that $\Sigma$ meets only finitely many $\Gamma_\bfU$-translates of $\gamma B$, so $\mathfrak{S}\times \Sigma$ meets only finitely many $\Gamma$-translates of $F\times B$. Therefore the map $\mathfrak{S}\times\Sigma\to \Gamma\backslash\cM_\bR$ is $\bR_{\alg}$-definable.
\end{proof}

\begin{defn}\label{defn def structure} Let $\Gamma\subset \bfG(\bQ)$ be an arithmetic subgroup and $r:\cM\to\cM_\bR$ the $\sl_2$-splitting.  Let $\Xi\subset\cM_\bR$ be a definable fundamental set for $\Gamma\backslash \cM_\bR$.  We endow $\Gamma\backslash \cM$ with a structure of a $\bR_{\alg}$-definable analytic space via proposition \ref{definability by pull-back} using $r^{-1}(\Xi)$ as a definable fundamental set. 
\end{defn}

Be careful that two different retractions will yield in general two different definable structures on $\Gamma \backslash \cM$.

\section{Variations of mixed Hodge structures and their period maps}

\subsection{Variation of mixed Hodge structures}
Let $R= \bZ, \bQ$ or $\bR$. A variation of mixed $R$-Hodge structures over a (possibly non-reduced) complex analytic space $S$ is the data of

\begin{itemize}
\item a $R$-local system $\cL$ on the underlying topological space,

\item an increasing filtration $\cW$ of $\cL$ by sublocal systems (the weight filtration),

\item a decreasing filtration $F$ of $\cL \otimes_{R} \cO_S$ by locally split $\cO_S$-submodules (the Hodge filtration)

\end{itemize}

such that 

\begin{itemize}
\item $F$ satisfies Griffiths transversality in the usual sense on the reduced\footnote{Note in particular that we do not require the nilpotent tangent directions to be Griffiths transverse, though it is not clear that this level of generality is useful:  variations coming from geometry will satisfy Griffiths transversality in the nilpotent directions as well.} regular locus of $S$.

\item for every $s \in S$, $(\cL_s, \cW_s, F_s)$ is a mixed $R$-Hodge structure.
\end{itemize}
We say the variation is graded-polarized if we are given a parallel polarization on each of the associated variation of pure Hodge structures. 

\begin{lemma}\label{unipotent monodromies}
Consider a variation of integral mixed Hodge structures over a complex analytic space $S$. Then, up to replacing $S$ by a finite \' etale cover, the pull-back of the underlying local system by any holomorphic map $\Delta^\ast \arrow S$ has unipotent monodromy.
\end{lemma}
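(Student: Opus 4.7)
The plan is to reduce to the pure case via the associated graded variations and then combine the classical monodromy theorem with Borel's theorem on neat arithmetic subgroups. First I would record the elementary fact that an element $g \in \Aut(V_{\bZ},W)$ preserving the weight filtration is unipotent if and only if each induced automorphism $\bar g_k$ on $\Gr_k^W V_{\bZ}$ is unipotent: if every $\bar g_k$ is unipotent, some power of $(g - \id)$ sends $W_k$ into $W_{k-1}$ for every $k$, so a sufficiently large power annihilates $V_{\bZ}$. Thus it suffices to show that, after a finite \'etale base change of $S$, the graded monodromy along any holomorphic map $\Delta^\ast \ra S$ is unipotent.

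Next, the monodromy representation $\rho : \pi_1(S) \to \bfG(\bZ)$ takes values in the arithmetic group preserving the integral lattice $V_{\bZ}$, the weight filtration $W$, and the graded polarization forms $\{q_k\}$. By Borel's theorem on the existence of neat subgroups of arithmetic groups, there is a finite-index subgroup $\Gamma_0 \subset \bfG(\bZ)$ with the property that the subgroup of $\overline{\bQ}^\ast$ generated by the eigenvalues of any element of $\Gamma_0$ is torsion-free. In particular every quasi-unipotent element of $\Gamma_0$ is unipotent. I would then pass to the finite \'etale cover $\tilde S \ra S$ corresponding to the finite-index subgroup $\rho^{-1}(\Gamma_0) \subset \pi_1(S)$ and replace the variation by its pullback.

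Finally, fix any $f : \Delta^\ast \ra \tilde S$ and let $\gamma \in \Gamma_0$ denote the monodromy of the pulled-back variation along a generator of $\pi_1(\Delta^\ast)$. Each induced $\bar\gamma_k$ is the monodromy of a polarized variation of pure $\bZ$-Hodge structure on $\Delta^\ast$, and is therefore quasi-unipotent by the classical monodromy theorem of Borel--Schmid. The eigenvalues of $\bar\gamma_k$ form a subset of the eigenvalues of $\gamma$, so by neatness they generate a torsion-free subgroup of $\overline{\bQ}^\ast$; being roots of unity, they must all equal $1$. Thus each $\bar\gamma_k$ is unipotent, and by the opening observation $\gamma$ itself is unipotent.

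I do not expect any serious obstacle here: the two inputs, namely quasi-unipotence of monodromy for polarized pure variations and the existence of neat arithmetic subgroups, are both classical. The one point requiring verification is that neatness descends to the graded quotients under $\bfG \to \bfH$, but this is immediate because the eigenvalues of the image on $\Gr^W V_\bC$ form a subset of those on $V_{\bC}$.
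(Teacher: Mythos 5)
Your argument is correct and follows essentially the same route as the paper: reduce to the associated graded pure variations, apply Borel's monodromy theorem to get quasi-unipotence, and pass to a finite \'etale cover chosen so that quasi-unipotent monodromy is forced to be unipotent. The only (inessential) difference is the choice of that cover: you pass to the preimage of a neat arithmetic subgroup, whereas the paper passes to a congruence subgroup where the monodromy is trivial mod $p$ and invokes the Minkowski--Serre fact that roots of unity of bounded degree inject modulo a large prime.
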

\begin{proof} By going to a finite \'etale cover, one can assume that all the monodromy operators are trivial modulo a prime number $p \geq 3$. On the other hand, by applying Borel's monodromy theorem \cite[Lemma 4.5]{Schmid1} to the associated variations of pure Hodge structures, one sees that the eigenvalues of the monodromy operator corresponding to a holomorphic map $\Delta^\ast \arrow S$ are roots of the unity. Since roots of unity of a fixed degree inject modulo $p$ for sufficiently large $p$, the claim follows.
\end{proof}

\subsection{Period maps}Let $S$ be a (possibly non-reduced) complex analytic space.  By a mixed period map from $S$ we mean a locally liftable analytic map $\phi : S \arrow \bfG(\bZ) \backslash \cM$ which is tangent to the Griffiths transverse foliation of $\cM$ on the reduced regular locus of $S$.  Evidently, a mixed period map from $S$ is equivalent to giving a variation of graded-polarized integral mixed Hodge structures on $S$ in the sense of the previous section.

\subsection{Admissibility}

The notion of admissibility for a variation of mixed Hodge structures was introduced by Steenbrink and Zucker over one-dimensional bases \cite{sz} and by Kashiwara \cite{kashiwara} in higher dimensions. Let us recall the definitions.\\

Let $(\cL, \cW, F)$ be a graded-polarizable variation of real mixed Hodge structures on $\Delta^\ast$ with unipotent monodromy. Let $\bar \cV $ and $\bar \cW_k$ denote the canonical extensions of $\cL \otimes_{\bR} \cO_{\Delta^\ast}$ and $\cW_k \otimes_{\bR} \cO_{\Delta^\ast}$ to $\Delta$ respectively, equipped with their logarithmic connections. The variation $(\cL, \cW, F)$ is called pre-admissible if the following conditions hold:
\begin{enumerate}
\item The residue at the origin of the logarithmic connection on $\bar \cV$, which is an endomorphism of the fiber $\bar \cV_{|0}$ of $\bar \cV$ at the origin, admits a weight filtration relative to $\bar \cW_{|0}$.
\item The Hodge filtration $F$ extends to a subbundle $\bar F$ of $\bar \cV $ such that $\Gr_ {\bar F}^p  \Gr_k^{\bar \cW} \bar \cV$ is locally-free for all $p$ and $k$.
\end{enumerate}

Given a Zariski-open subset $S$ in a reduced complex analytic space $\bar S$, we say that a graded-polarized variation of real mixed Hodge structures $(\cL, \cW, F)$ on $S$ is admissible with respect to the inclusion $S \subset \bar S$ if for any holomorphic map $f : \Delta \arrow \bar S$ such that $f(\Delta^\ast) \subset S$ and $f^\ast \cL$ has unipotent monodromy, the pull-back variation on $\Delta^\ast$ is pre-admissible. \\

One easily verifies that a variation of real mixed Hodge structures on $\Delta^\ast$ with unipotent monodromy which is pre-admissible is admissible with respect to the inclusion $\Delta^\ast \subset \Delta$, cf. \cite[lemma 1.9.1]{kashiwara}.

\begin{prop}
If a graded-polarizable variation of real mixed Hodge structures over a complex algebraic variety $S$ is admissible with respect to an algebraic compactification $\bar S$ of $S$, then it is admissible with respect to any other algebraic compactification of $S$.
\end{prop}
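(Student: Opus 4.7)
The plan is to compare admissibility with respect to any two algebraic compactifications through a common dominating compactification, and then to exploit the fact that pre-admissibility on a punctured disk $\Delta^\ast$ depends only on the variation there, not on how the disk sits inside the ambient compactification.

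First, given two compactifications $\bar S_1$ and $\bar S_2$ of $S$, the closure of the diagonal image of $S$ in $\bar S_1 \times \bar S_2$ is a third algebraic compactification $\bar S_3$ whose two projections $\pi_i : \bar S_3 \to \bar S_i$ are proper algebraic morphisms restricting to the identity on $S$. It therefore suffices to prove the following: whenever $\pi : \bar S' \to \bar S$ is a proper morphism between two algebraic compactifications of $S$ that restricts to the identity on $S$, admissibility with respect to $\bar S$ is equivalent to admissibility with respect to $\bar S'$.

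The implication from $\bar S$ to $\bar S'$ is immediate: for any test map $f : \Delta \to \bar S'$ with $f(\Delta^\ast) \subset S$ and unipotent monodromy, the composite $\pi \circ f : \Delta \to \bar S$ satisfies the same hypotheses, and since $\pi$ is the identity on $S$ the two pullbacks of $(\cL, \cW, F)$ agree on $\Delta^\ast$. Pre-admissibility is a condition intrinsic to the variation on $\Delta^\ast$ together with the canonical extension of the underlying bundle, so pre-admissibility of the pullback by $\pi \circ f$ forces that of the pullback by $f$.

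The converse direction hinges on a map-extension lemma, which I expect to be the main obstacle: any holomorphic $f : \Delta \to \bar S$ with $f(\Delta^\ast) \subset S$ lifts to a holomorphic $\tilde f : \Delta \to \bar S'$ with $\pi \circ \tilde f = f$. Granted this, $\tilde f$ induces the same variation on $\Delta^\ast$ as $f$, with the same unipotent monodromy, so pre-admissibility of the pullback follows from admissibility with respect to $\bar S'$. For the lemma itself, I would invoke the valuative criterion of properness for the proper algebraic morphism $\pi$: by Chow's lemma one may reduce to the case where $\pi$ is projective, embedding $\bar S'$ inside $\bar S \times \bP^N$ over $\bar S$; the second coordinate of $f|_{\Delta^\ast}$ is then a holomorphic map $\Delta^\ast \to \bP^N$, which extends uniquely across $0$ by clearing denominators in homogeneous coordinates, and the resulting map from $\Delta$ into $\bar S \times \bP^N$ automatically lands in the closed subvariety $\bar S'$.
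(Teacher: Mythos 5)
Your proof is correct, but it takes a genuinely different route from the one in the paper. The paper's argument is a one-liner that leverages the o-minimal framework already in place: a holomorphic map $f:\Delta^\ast\to S$ extends to a holomorphic $\Delta\to\bar S$ if and only if $f$ is $\bR_{\an}$-definable (as a map from the definable space $\Delta^\ast$ to the definable space $S$), and the latter condition makes no reference to the chosen compactification. So the collection of test curves in the definition of admissibility is intrinsic to $S$, and the conclusion is immediate. Your argument is classical algebraic geometry instead: you dominate both compactifications by the closure of the diagonal, reduce to comparing two compactifications related by a proper morphism $\pi:\bar S'\to\bar S$ that is the identity on $S$, handle one direction by composing with $\pi$, and handle the other by a lifting lemma. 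Both work; the paper's is shorter and exploits machinery already built for the rest of the paper, while yours is self-contained and would be the natural argument in a text not committed to o-minimality.

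One step in your lifting lemma deserves a bit more care. Given $f:\Delta\to\bar S$ with $f(\Delta^\ast)\subset S$, the "second coordinate" $g:\Delta^\ast\to\bP^N$ does not automatically extend across $0$ merely by "clearing denominators" in homogeneous coordinates --- a general holomorphic map $\Delta^\ast\to\bP^N$ can have an essential singularity. What saves you is that after Chow's lemma the map $S\to\bP^N$ is \emph{algebraic}, hence near $f(0)\in\bar S$ it is given by rational functions in local algebraic coordinates; composing with the holomorphic $f$ then yields a map $\Delta^\ast\to\bP^N$ with holomorphic (or at worst meromorphic) homogeneous components on a neighborhood of $0$, and a meromorphic map from a one-dimensional disk to $\bP^N$ does extend holomorphically. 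Equivalently, this is the analytic incarnation of the valuative criterion of properness applied to the DVR $\cO_{\Delta,0}$, which you invoke; you should make explicit that it is the algebraicity of $\pi$ (not just its properness as an analytic map) that is being used at this point.
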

\begin{proof}
Indeed, a holomorphic map $f : \Delta^\ast \arrow S$ is the restriction of a holomorphic $ \Delta \arrow \bar S$ exactly when it is definable in $\bR_{\an}$, hence this property in independent of the compactification.
\end{proof}

\subsection{Nilpotent orbit theorem}

Consider a graded-polarized variation of real mixed Hodge structures over $ (\Delta^\ast)^n$ with unipotent monodromies. Let $\bH$ denote the Poincar\'e upper half-plane and $e : \bH^n \arrow (\Delta^\ast)^n$ the uniformizing map given by $e(z_1, \cdots, z_n) = (\exp(2 \pi i \cdot z_1), \cdots , \exp(2 \pi i \cdot z_n))$.  Choosing a reference point in $\bH^n$, we get a period map $\tilde \phi : \bH^n \arrow \cM$. Denoting by $N_j$ ($1 \leq j\leq n$) the logarithm of the monodromy operators corresponding to counterclockwise simple circuits around the various punctures, the holomorphic map $\tilde \Psi : \bH^n \arrow \check \cM$ given by $\tilde \Psi(z) := \exp( - \sum \limits_{j=1}^n z_j \cdot N_j) \cdot \tilde \phi(z)$ factorizes through the projection map $e : \bH^n \arrow  (\Delta^\ast)^n$. Let $ \Psi : (\Delta^\ast)^n \arrow \check \cM$ denote the factorization. Thanks to Schmid's nilpotent orbit theorem \cite[Theorem 4.12]{Schmid1}, the composition of $ \Psi$ with the projection $\check \cM \arrow \check \Omega$ extend as a holomorphic map $\Delta^n \arrow \check \Omega$. If one assume from now on that the variation is admissible with respect to the inclusion $ (\Delta^\ast)^n \subset \Delta^n$, then by definition the restriction of $\Psi$ to any punctured disk $\Delta^\ast \subset (\Delta^\ast)^n$ extends as a holomorphic map $\Delta \arrow \check \cM$. Since the projection $\check \cM \arrow \check \Omega$ is an affine holomorphic map, it follows that $\Psi $ extends as a holomorphic map $\Delta^n \arrow \check \cM$. Therefore we have proved:
\begin{prop}\label{nilpotent orbit}
Let $S$ be the complement of a normal crossing divisor in a complex manifold $\bar S$. Let $(\cL, \cW, F)$ be a graded-polarized variation of real mixed Hodge structures over $S$ with unipotent monodromies at infinity which is admissible with respect to the inclusion $S \subseteq \bar S$. If $\bar \cV $ and $\bar \cW_k$ denote the canonical extensions of $\cL \otimes_{\bR} \cO_{\Delta^\ast}$ and $\cW_k \otimes_{\bR} \cO_{\Delta^\ast}$ to $\bar S$ respectively, then the Hodge filtration $F$ extends to a subbundle $\bar F$ of $\bar \cV $ such that $\Gr_ {\bar F}^p  \Gr_k^{\bar \cW} \bar \cV$ is locally-free for all $p$ and $k$.
\end{prop}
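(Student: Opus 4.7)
The claim is local near a boundary point, so by choosing local coordinates I may assume $\bar S = \Delta^n$ and $S = (\Delta^*)^n$, with the boundary divisor being the union of coordinate hyperplanes. What I need to show is that the holomorphic subbundle $F \subseteq \cL \otimes \cO_S$ extends to a holomorphic subbundle of $\bar\cV$ (with locally free graded pieces with respect to $\bar\cW$); equivalently, that the filtration, viewed as a holomorphic map to a product of flag varieties, extends across the origin. The strategy is to write down such an extension explicitly via the standard nilpotent-orbit untwisting trick, and show it is holomorphic using Schmid on the graded pieces and admissibility one variable at a time.

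The first step is to build the untwisted map. Lift the period map to $\tilde\phi : \bH^n \to \cM$ via the uniformization $e(z_1,\ldots,z_n) = (e^{2\pi i z_1},\ldots, e^{2\pi i z_n})$ and let $N_j$ denote the logarithms of the (unipotent) monodromies. The holomorphic map
\[
\tilde\Psi(z) \;=\; \exp\!\Big(-\sum_{j=1}^n z_j N_j\Big)\cdot \tilde\phi(z) \;\in\; \check\cM
\]
is invariant under $z_j \mapsto z_j + 1$ for each $j$, so it descends to a holomorphic map $\Psi : (\Delta^*)^n \to \check\cM$. The goal is then to extend $\Psi$ as a holomorphic map $\Delta^n \to \check\cM$, since the Hodge filtration $\bar F$ together with its graded pieces is precisely the pullback of the tautological flag via $\Psi$, twisted back by $\exp(\sum z_j N_j)$ (which acts on $\bar\cV$ through the logarithmic connection and is regular at the origin).

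The second step is to extend the associated-graded part. The projection $\pi : \check\cM \to \check\Omega$ corresponds to passing to the graded polarized variations of pure Hodge structures, and for each of these Schmid's nilpotent orbit theorem \cite[Thm. 4.12]{Schmid1} gives that $\pi \circ \Psi : (\Delta^*)^n \to \check\Omega$ extends holomorphically across the origin. It therefore remains to check extension in the fibers of $\pi$. The key point is that $\pi : \check\cM \to \check\Omega$ is an \emph{affine} morphism: the filtration $F$ on $V_\bC$ is determined by the data of its image on each $\Gr^W_k V_\bC$ together with a collection of lifts, and the space of lifts is an affine space (a torsor under a vector bundle built from $W_{-1}\End V$).

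The third step is to use admissibility to conclude. Admissibility with respect to $\Delta^n$ says that for the inclusion of any one-dimensional punctured disk $\Delta^* \hookrightarrow (\Delta^*)^n$ the pullback variation is pre-admissible, and in particular the Hodge filtration extends across the puncture; this translates exactly into the statement that the restriction of $\Psi$ to each coordinate disk extends holomorphically as a map into $\check\cM$. Combined with the already-established extension of $\pi\circ\Psi$, and the fact that $\pi$ is an affine bundle, a standard Hartogs-type argument now lets me conclude that $\Psi$ itself extends holomorphically to $\Delta^n \to \check\cM$: in affine coordinates on a fiber of $\pi$ over a neighborhood of the origin, each component of $\Psi$ is a holomorphic function on $(\Delta^*)^n$ which is bounded (indeed holomorphic) on each coordinate disk, hence extends by Riemann's theorem. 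The main technical obstacle is precisely this last combination step: one has to be careful that admissibility, which a priori is a statement about restriction to one-parameter disks, together with extension of the graded data, really does suffice to extend the full filtration, and it is the affine-fibration structure of $\check\cM \to \check\Omega$ that makes this work. Once $\Psi$ is extended, pulling back the tautological filtration and twisting back by $\exp(\sum z_j N_j)$ produces the desired subbundle $\bar F \subset \bar\cV$ with locally free graded pieces.
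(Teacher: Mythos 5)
Your proof follows the paper's argument exactly: untwist to $\Psi$, extend $\pi\circ\Psi$ via Schmid's nilpotent orbit theorem, use admissibility to extend $\Psi$ along disks, and conclude via affineness of $\check\cM\to\check\Omega$. The one imprecision is the final invocation of ``Riemann's theorem'': the clean version is that, in an affine trivialization near the origin, the negative Laurent coefficients of the affine component $\sigma$ in each $z_j$ are holomorphic functions of the remaining coordinates that vanish identically (because each one-variable slice extends by admissibility), so $\sigma$ extends across $\{z_j=0\}$ away from the deeper strata and Hartogs in codimension $\ge 2$ finishes — but this is exactly what the paper's terse ``affine holomorphic, hence extends'' is silently doing.
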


 \subsection{Admissible period maps are definable}

\begin{thm}\label{definability of period maps}
Consider an admissible variation of graded-polarized integral mixed Hodge structures over a reduced complex algebraic variety $S$, and let $\phi : S \arrow \bfG(\bZ) \backslash \cM$ be the associated period map.
Then $\phi$ is definable in $\bR_{\an, \exp}$, where we equip $ \bfG(\bZ) \backslash \cM$ with the $\bR_{\alg}$-definable structure associated to the $\sl_2$-splitting, cf. section \ref{definable structure on mixed period domains}.
\end{thm}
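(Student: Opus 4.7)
The plan is to adapt the pure-case argument of \cite{bkt} to the mixed setting by reducing to a local computation near the boundary of an algebraic compactification. Since $\bR_{\an,\exp}$-definability is a local property, I would first compactify $S$ via resolution of singularities to a smooth proper $\bar{S}$ with boundary a simple normal crossing divisor, and reduce to checking definability on a polydisk chart in which $S$ has the form $(\Delta^*)^k \times \Delta^{n-k}$. By Lemma \ref{unipotent monodromies}, after a finite \'etale cover---which preserves both admissibility and definability---I may further assume that all local monodromies $T_j = \exp(N_j)$ are unipotent.

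Proposition \ref{nilpotent orbit} then yields a holomorphic untwisted map $\Psi : \Delta^n \to \check\cM$, from which I form the standard lift
\[
\tilde\phi(z, w) = \exp\Bigl(\sum_{j=1}^{k} z_j N_j\Bigr) \cdot \Psi(e(z), w) : \bH^k \times \Delta^{n-k} \to \cM,
\]
where $e(z_j) = \exp(2\pi i z_j)$. Definition \ref{defn def structure} equips $\Gamma\backslash\cM$ with a definable structure via a fundamental set of the form $r^{-1}(F \times B)$, where $r : \cM \to \cM_\bR$ is the $\sl_2$-retraction, $F \subset \Omega$ is a Siegel set for $\Gamma_{\bfH}\backslash\Omega$, and $B \subset \cS(W)(\bR)$ is a bounded semi-algebraic fundamental set for the cocompact action of $\Gamma \cap \bfU(\bQ)$. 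On a Siegel-type region $\Sigma = \{\Imag z_j > T,\ |\Real z_j| \leq \tfrac{1}{2}\} \times \Delta^{n-k}_{1/2}$ the uniformizing map to our chart is $\bR_{\an,\exp}$-definable and surjective, so it will suffice to show that $r \circ \tilde\phi$ sends $\Sigma$ into finitely many $\Gamma$-translates of $F \times B$.

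The $\Omega$-component of $r \circ \tilde\phi$ is nothing but the period map of the associated graded pure variation, for which the pure case of \cite{bkt} supplies the required Siegel-set bound. The main obstacle, and precisely where the admissibility hypothesis is essential, is the $\cS(W)(\bR)$-component. Here I would invoke the mixed $\SL_2$-orbit theorem of Kato--Nakayama--Usui \cite{knu} in the refined form of Brosnan--Pearlstein \cite{bp}: the $\sl_2$-splitting attached to an admissible nilpotent orbit admits an asymptotic expansion as $\Imag z_j \to \infty$ whose leading term is a fixed limiting element of $\cS(W)(\bR)$ determined by the orbit, and whose remainder is uniformly bounded on $\Sigma$. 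After translating by a suitable element of $\Gamma \cap \bfU(\bQ)$ to absorb the limiting splitting into $B$, this yields the desired bound on the $\cS(W)(\bR)$-component. This is exactly the boundedness statement advertised in the introduction as the key input we take from \cite{bp, knu}.

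Combining the two coordinate estimates, $r \circ \tilde\phi(\Sigma)$ lies in finitely many $\Gamma$-translates of $F \times B$, and by construction of the definable structure the induced map $\Sigma \to \Gamma\backslash\cM$ is $\bR_{\an,\exp}$-definable. Since the uniformization $\Sigma \to (\Delta^*)^k \times \Delta^{n-k}$ is itself $\bR_{\an,\exp}$-definable and surjective, $\phi$ descends to an $\bR_{\an,\exp}$-definable map in a neighborhood of each boundary point of $\bar S$, and standard patching over finitely many charts covering $\bar S$ completes the proof.
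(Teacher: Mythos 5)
Your proof is correct and follows essentially the same route as the paper: reduce to a smooth model with unipotent local monodromy, pass to punctured-polydisk charts, use the admissible nilpotent orbit theorem to show the lift restricted to a vertical strip is $\bR_{\an,\exp}$-definable, and then combine the pure-case Siegel-set bound from \cite{bkt} with the Brosnan--Pearlstein boundedness of the $\sl_2$-splitting on vertical strips to land in finitely many $\Gamma$-translates of the fundamental set $r^{-1}(F\times B)$. The only cosmetic departures are your use of $(\Delta^\ast)^k\times\Delta^{n-k}$ charts rather than $(\Delta^\ast)^n$ and the (unnecessary but harmless) step of translating by an element of $\Gamma\cap\bfU(\bQ)$ to absorb the limiting splitting, whereas boundedness alone already gives finiteness against the cocompact fundamental set $B$.
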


This generalizes to the mixed case \cite[Theor.1.3]{bkt} for
  pure variations of Hodge structures.
\section{Proof of Theorem \ref{definability of period maps}}

Recall that it is sufficient to prove the definability of the map obtained by precomposing $\phi$ by a surjective definable holomorphic map. In particular, by looking at a desingularization of $S$, one can assume from the beginning that $S$ is smooth. Moreover, up to replacing $S$ by a finite \' etale cover, one can assume that the monodromies at infinity are unipotent, c.f. lemma \ref{unipotent monodromies}.

Taking a covering of $S$ in $\bR_{\alg}$ (or just $\bR_{\an, \exp}$) by open subsets isomorphic to $(\Delta^\ast)^n$, one sees that we are reduced to proving:

\begin{thm}
Consider an admissible variation of graded-polarized integral mixed Hodge structures with unipotent monodromies over the  punctured  polydisk $(\Delta^\ast)^n$, and let $\phi : (\Delta^\ast)^n \arrow \bfG(\bZ) \backslash \cM$ be the associated period map. Then $\phi$ is definable in $\bR_{\an, \exp}$.
\end{thm}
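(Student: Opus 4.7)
The plan is to mirror the strategy used for pure variations in \cite{bkt}, with two additional ingredients: Proposition \ref{nilpotent orbit} for admissible variations, and the mixed $SL_2$-orbit theorem of Kato--Nakayama--Usui as processed by Brosnan--Pearlstein \cite{bp} to control the $\sl_2$-splitting near the boundary. Recall that the target $\Gamma\backslash\cM$ carries the $\bR_{\alg}$-definable structure whose fundamental set has the form $r^{-1}(F\times B)$, where $r:\cM\to\cM_\bR\cong\Omega\times\cS(W)(\bR)$ is the retraction coming from the $\sl_2$-splitting, $F\subset\Omega$ is a Siegel set for the image of $\Gamma$ in $\bfH(\bR)$, and $B\subset\cS(W)(\bR)$ is a bounded semi-algebraic subset. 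So it will be enough to exhibit a finite covering of $\bH^n$ by definable sets $U_\alpha$ and elements $\gamma_\alpha\in\Gamma$ such that $\gamma_\alpha\cdot\tilde\phi(U_\alpha)\subseteq r^{-1}(F\times B)$, with the resulting maps $U_\alpha \to r^{-1}(F\times B)$ being $\bR_{\an,\exp}$-definable.

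First, I would lift to the universal cover to get $\tilde\phi:\bH^n\to\cM$ and apply Proposition \ref{nilpotent orbit} to write
\[\tilde\phi(z)=\exp\!\Bigl(\sum_{j=1}^n z_j N_j\Bigr)\cdot\Psi(e(z)),\]
with $\Psi:\Delta^n\to\check\cM$ holomorphic (hence $\bR_{\an}$-definable on relatively compact subsets) and $e(z)=(e^{2\pi iz_1},\ldots,e^{2\pi iz_n})$. Next, composing with the projection $\cM\to\Omega$ and applying the pure-case theorem \cite{bkt} to the graded variation, I would obtain a finite cover of $\bH^n$ by definable Siegel-type open sets $\mathfrak{S}_\alpha$ and elements $\bar\gamma_\alpha\in\bfH(\bZ)$, lifted to $\gamma_\alpha\in\Gamma$, whose action carries the graded period map on $\mathfrak{S}_\alpha$ into $F$. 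Then, further subdividing each $\mathfrak{S}_\alpha$ according to the integer parts of the $\mathrm{Re}\,z_j$ and translating by the corresponding elements of $\Gamma\cap\bfU(\bQ)$, I would invoke the boundedness estimates from \cite{bp} for the $\sl_2$-splitting to conclude that the $\cS(W)(\bR)$-component of $r(\gamma_\alpha\cdot\tilde\phi(z))$ lies in a fixed bounded semi-algebraic set $B$. On each piece, the resulting map is then a composition of the restricted-analytic $\Psi$, the complex exponential $e$ restricted to a Siegel strip (where $\mathrm{Re}\,z_j\in[0,1)$ and $\mathrm{Im}\,z_j$ is bounded below), and universal Lie-polynomial expressions for the $\sl_2$-splitting in Hodge components---hence $\bR_{\an,\exp}$-definable.

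The main obstacle I anticipate is precisely the third step, namely the uniform boundedness of the $\cS(W)(\bR)$-component of the retraction on each Siegel-shifted piece. This is exactly the content extracted from the mixed $SL_2$-orbit theorem of \cite{knu} and sharpened in \cite{bp}, and is precisely why the $\sl_2$-splitting rather than, say, the $\delta$-splitting is the correct choice of retraction for this definable structure: only for the canonical $\sl_2$-splitting does one have uniform asymptotic control compatible with the combinatorial Siegel-set decomposition provided by nested degenerations.
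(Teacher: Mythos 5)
Your proposal is correct and follows essentially the same route as the paper: use Proposition~\ref{nilpotent orbit} to write $\tilde\phi(z)=\exp(\sum z_j N_j)\cdot\Psi(e(z))$ with $\Psi$ extending holomorphically over $\Delta^n$ (giving definability of $\tilde\phi$ on a vertical strip), then combine the pure-case theorem of \cite{bkt} applied to the associated graded with the Brosnan--Pearlstein boundedness of the $\cS(W)(\bR)$-component of the $\sl_2$-retraction on vertical strips (\cite[Cor.~2.34]{bp}) to show the image lands in a finite union of definable fundamental sets. The paper works with a single fixed vertical strip and proves that its image is contained in finitely many translates of $r^{-1}(F\times B)$, whereas you cover $\bH^n$ and translate each piece into the fundamental set; these are equivalent ways of packaging the descent argument, and your extra subdivision by integer parts of $\mathrm{Re}\,z_j$ is redundant once the strips are taken with bounded real part.
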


Let $\bH$ denote the Poincar\'e upper half-plane and $e : \bH^n \arrow (\Delta^\ast)^n$ the uniformizing map given by $e(z_1, \cdots, z_n) = (\exp(2 \pi i \cdot z_1), \cdots , \exp(2 \pi i \cdot z_n))$. By choosing a lifting $\tilde \phi$ of the period map $\phi$, we obtain a commutative diagram of holomorphic maps
\[\begin{tikzcd}
 \bH^n  \arrow{r}{\tilde \phi}\arrow{d}[swap]{e}&\cM \arrow{d} \\
(\Delta^\ast)^n \arrow{r}[swap]{\phi} &  \bfG(\bZ) \backslash \cM 
\end{tikzcd}\]

A vertical strip in $\bH^n$ is by definition a product of sets of the form
\[ \{ (x,y) \in \bH \, | \,  a < x < b, \, c < y  \} \]
for some real numbers $a < b$ and $c>0$. Let $\mathfrak{S} \subset \bH^n$ be a vertical strip mapped by $e$ surjectively onto $(\Delta^\ast)^n$, and consider the induced commutative diagram of holomorphic maps
\[\begin{tikzcd}
\mathfrak{S}  \arrow{r}{\tilde \phi_{|\mathfrak{S}}}\arrow{d}[swap]{e_{|\mathfrak{S}}}&\cM \arrow{d} \\
(\Delta^\ast)^n \arrow{r}[swap]{\phi} &  \bfG(\bZ) \backslash \cM 
\end{tikzcd}\]

Since the holomorphic map $e_{|\mathfrak{S}}$ is definable and surjective, the definability of $\phi$ will be proved if we show that $\tilde \phi_{|\mathfrak{S}} : \mathfrak{S} \arrow \cM$ is definable and that the image of $\mathfrak{S}$ by $\tilde \phi$ is contained in a finite union of definable fundamental sets. This is the content of the next two results.

\begin{prop}
If $\tilde \phi : \bH^n \arrow \cM$ is a lifting of the period map of an admissible variation of mixed Hodge structures over $(\Delta^\ast)^n$ with unipotent monodromies, then its restriction to any vertical strip is definable in $\bR_{\an, \exp}$.
\end{prop}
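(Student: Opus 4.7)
My plan is to use the nilpotent-orbit factorization $\tilde\phi(z) = \exp(\sum_{j=1}^n z_j N_j)\cdot \Psi(e(z))$ established in the discussion preceding Proposition \ref{nilpotent orbit}, where $\Psi:(\Delta^\ast)^n\to\check\cM$ is the holomorphic descent of $\tilde\Psi(z):=\exp(-\sum_j z_j N_j)\cdot\tilde\phi(z)$ along $e$. I will verify that each of the two factors on the right is $\bR_{\an,\exp}$-definable on the vertical strip $\mathfrak{S}$, and then combine them using the fact that the action $\bfG(\bC)\times\check\cM\to\check\cM$ is algebraic and that $\cM\subset\check\cM$ is a semi-algebraic open endowed with its natural $\bR_{\alg}$-definable structure.

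The crucial input is admissibility, which enters exactly through Proposition \ref{nilpotent orbit}: it guarantees that $\Psi$ extends to a holomorphic map $\Delta^n\to\check\cM$, as opposed to merely extending the composition with $\check\cM\to\check\Omega$. Writing $\mathfrak{S}=\prod_j\{a_j<x_j<b_j,\, c_j<y_j\}$ with $c_j>0$, the image $e(\mathfrak{S})$ is contained in the compact polydisk $\prod_j\{|w_j|\leq e^{-2\pi c_j}\}\subset\Delta^n$, so the restriction of $\Psi$ to this compact polydisk is $\bR_{\an}$-definable as a map to the projective variety $\check\cM$.

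Next, the map $e|_{\mathfrak{S}}$ itself is $\bR_{\an,\exp}$-definable: in coordinates
\[
e(x_j+iy_j) \;=\; e^{-2\pi y_j}\bigl(\cos(2\pi x_j) + i\sin(2\pi x_j)\bigr),
\]
where $\cos$ and $\sin$ restricted to the bounded interval $(a_j,b_j)$ are restricted-analytic and $y\mapsto e^{-2\pi y}$ lies in $\bR_{\exp}$. Composing with the previous paragraph, $\Psi\circ e|_{\mathfrak{S}}:\mathfrak{S}\to\check\cM$ is $\bR_{\an,\exp}$-definable. Meanwhile $z\mapsto \exp(\sum_j z_j N_j)$ is polynomial in $z$ (each $N_j$ being nilpotent), hence $\bR_{\alg}$-definable as a map $\mathfrak{S}\to\bfG(\bC)$. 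Multiplying via the algebraic $\bfG(\bC)$-action on $\check\cM$ then produces the desired $\bR_{\an,\exp}$-definability of $\tilde\phi|_{\mathfrak{S}}$, with image landing in the semi-algebraic open $\cM\subset\check\cM$.

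I expect no genuine obstacle here: the entire nontrivial content is the extension of $\Psi$ across the boundary into $\check\cM$, which is precisely what admissibility delivers via Proposition \ref{nilpotent orbit}. Once the extension is in place, the argument is a mechanical combination of the standard fact that $\exp$ on a vertical strip lies in $\bR_{\an,\exp}$ with the algebraicity of the $\bfG(\bC)$-action. The $\sl_2$-splitting and the boundedness input from Brosnan--Pearlstein \cite{bp} play no role in this proposition; they are reserved for the companion statement asserting that $\tilde\phi(\mathfrak{S})$ is covered by finitely many definable fundamental sets for $\Gamma\backslash\cM$.
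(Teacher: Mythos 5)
Your proposal is correct and follows essentially the same route as the paper: factor $\tilde\phi$ as $\exp(\sum_j z_j N_j)\cdot\Psi(e(z))$, invoke admissibility via Proposition \ref{nilpotent orbit} to extend $\Psi$ holomorphically to $\Delta^n$ so that its restriction to the relatively compact image of the strip is $\bR_{\an}$-definable, observe $e|_{\mathfrak{S}}$ is $\bR_{\an,\exp}$-definable and $z\mapsto\exp(\sum_j z_jN_j)$ is algebraic, and compose through the algebraic $\bfG(\bC)$-action on $\check\cM$. The only difference is that you spell out the coordinate formula for $e$ and the compact-polydisk containment, which the paper leaves implicit.
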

\begin{proof}
Denoting by $N_j$ ($1 \leq j\leq n$) the logarithm of the monodromy operators corresponding to counterclockwise simple circuits around the various punctures, the holomorphic map $\tilde \Psi : \bH^n \arrow \check \cM$ given by $\tilde \Psi(z) := \exp( - \sum \limits_{j=1}^n z_j \cdot N_j) \cdot \tilde \phi(z)$ factorizes through the projection map $e : \bH^n \arrow  (\Delta^\ast)^n$. If $ \Psi : (\Delta^\ast)^n \arrow \check \cM$ denote the factorization, it follows from the admissibility condition that $\Psi $ extends as a holomorphic map $\Delta^n \arrow \check \cM$, cf. proposition \ref{nilpotent orbit}. For any vertical strip $\mathfrak{S} \subset \bH^n$, the restriction of $\Psi$ to its image by $e$ is the restriction to a relatively compact set of a holomorphic map, therefore it is definable in $\bR_{\an}$.  As $e_{ |\mathfrak{S}} : \mathfrak{S}  \arrow \Delta ^n $ is $\bR_{\an, \exp}$-definable, it follows that $(\mathfrak{S} \arrow \cM, z \mapsto \tilde \Psi(z) =\Psi(e(z))$ is $\bR_{an, exp}$-definable. Since both the action of $\bfG(\bC)$ on the compact dual $\check \cM$ and the morphism $\bC^n \arrow \bfG(\bC)$ given by $(z_1, \cdots, z_n) \mapsto \exp( \sum \limits_{j=1}^n z_j \cdot N_j) $ are algebraic, it follows from the equality $\tilde \phi(z) = \exp( \sum \limits_{j=1}^n z_j \cdot N_j) \cdot \tilde \Psi(z)$ that the restriction of $\tilde \phi$ to any vertical strip is definable in $\bR_{\an, \exp}$.
\end{proof}

\begin{prop}
If $\tilde \phi : \bH^n \arrow \cM$ is a lifting of the period map of an admissible variation of mixed Hodge structures over $(\Delta^\ast)^n$, then the image by $\tilde \phi$ of a vertical strip is contained in a finite union of definable fundamental sets.
\end{prop}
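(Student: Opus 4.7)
The plan is to combine the pure case of the theorem \cite[Theorem 1.3]{bkt} with the boundedness of the $\sl_2$-splitting along an admissible mixed nilpotent orbit supplied by Brosnan--Pearlstein. The fundamental set used to define the $\bR_{\alg}$-definable structure on $\Gamma\backslash\cM$ is $r^{-1}(\Xi)$, where $\Xi=F_\Omega\times B\subset\cM_\bR=\Omega\times\cS(W)(\bR)$ is a product of a Siegel-set fundamental set $F_\Omega$ for $\Gamma_\bfH\backslash\Omega$ and a bounded semi-algebraic fundamental set $B$ for $\Gamma_\bfU\backslash\cS(W)(\bR)$ (cf.\ Proposition~\ref{prop def struct real split}). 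Because $r$ is $\bfG(\bR)$-equivariant, it is enough to cover the image $(r\circ\tilde\phi)(\mathfrak{S})\subset\Omega\times\cS(W)(\bR)$ by finitely many $\Gamma$-translates of a product $\mathfrak{S}_\Omega\times B_0$ of a Siegel set by a bounded semi-algebraic subset. Writing $r\circ\tilde\phi=(\phi_{\gr},s)$, I handle each factor separately.

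For $\phi_{\gr}$: this is the lift of the period map of the associated graded pure polarized variation of Hodge structures, so by \cite[Theorem 1.3]{bkt} (using Schmid's $\SL_2$-orbit theorem and Borel's reduction theory), $\phi_{\gr}(\mathfrak{S})\subset\bigcup_{i=1}^N h_i\mathfrak{S}_\Omega$ for a Siegel set $\mathfrak{S}_\Omega\subset\Omega$ and finitely many $h_i\in\Gamma_\bfH$. Since $\Gamma\to\Gamma_\bfH$ is surjective by definition, each $h_i$ lifts to some $\tilde h_i\in\Gamma$. Partitioning $\mathfrak{S}=\bigsqcup\mathfrak{S}^{(i)}$ accordingly and replacing $\tilde\phi$ by $\tilde h_i^{-1}\tilde\phi$ on each piece (still an admissible lifted period map, with monodromy operators conjugated by $\tilde h_i^{-1}$), the problem reduces to showing that on each of the resulting strips, $s(\mathfrak{S}^{(i)})\subset\cS(W)(\bR)$ lies in a bounded semi-algebraic subset.

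For $s$: the admissibility hypothesis provides, via Proposition~\ref{nilpotent orbit}, a decomposition $\tilde\phi(z)=\exp(\sum_j z_j N_j)\,\tilde\Psi(e(z))$ with $\tilde\Psi\colon\Delta^n\to\check\cM$ holomorphic. Setting $z_j=x_j+iy_j$ and using $\bfG(\bR)$-equivariance of $r$,
\[
s(z)=\exp\!\Bigl(\sum_j x_j N_j\Bigr)\cdot s\!\Bigl(\exp\!\Bigl(i\sum_j y_j N_j\Bigr)\,\tilde\Psi(e(z))\Bigr).
\]
On $\mathfrak{S}$ the real parts $x_j$ are bounded, so $\exp(\sum_j x_j N_j)$ ranges over a compact subset of $\bfU(\bR)$, which acts by affine (hence bounded) transformations on $\cS(W)(\bR)$. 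It remains to bound the inner expression $s(\exp(i\sum_j y_j N_j)\,\tilde\Psi(w))$ uniformly as $(y_j)\in[c_1,\infty)\times\cdots\times[c_n,\infty)$ and $w$ ranges over a fixed closed polydisk in $\Delta^n$. This is precisely the boundedness statement supplied by the mixed $\SL_2$-orbit theorem of Kato--Nakayama--Usui \cite{knu} and its consequences worked out by Brosnan--Pearlstein \cite{bp}: asymptotically such a mixed nilpotent orbit is approximated by a distinguished $\SL_2$-orbit whose $\sl_2$-splitting is uniformly bounded, with exponentially decaying error terms.

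The main obstacle is the last step. The uniform boundedness of the $\sl_2$-splitting along an admissible nilpotent orbit is the deep input flagged in the introduction; without it the argument collapses, since the $\sl_2$-splitting could a priori grow unboundedly in the $\Im z_j\to\infty$ directions in a way not accounted for by the monodromy lattice $\Gamma_\bfU$. Granted it, the remainder is bookkeeping: combining the finitely many Siegel-set translates from the pure case with the bounded semi-algebraic subset from the splitting case yields a finite cover of $\tilde\phi(\mathfrak{S})$ by $\Gamma$-translates of $r^{-1}(\Xi)$.
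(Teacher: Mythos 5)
Your proposal is correct and follows essentially the same route as the paper: the fundamental set is $r^{-1}(F_\Omega\times B)$, the $\Omega$-component is handled by the pure case of \cite{bkt}, and the $\cS(W)(\bR)$-component is handled by Brosnan--Pearlstein. The paper simply quotes \cite[Cor.~2.34]{bp} directly, which already asserts that $s\circ\tilde\phi$ is bounded on vertical strips, so your intermediate decomposition $s(z)=\exp(\sum_j x_j N_j)\cdot s(\exp(i\sum_j y_j N_j)\tilde\Psi(e(z)))$ is redundant (and contains a small slip: $\exp(\sum_j x_j N_j)$ lies in a compact subset of $\bfG(\bR)$, not of $\bfU(\bR)$, though it still acts by affine transformations on $\cS(W)(\bR)$ so the conclusion stands).
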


Given the definition of the definable structure in Definition
\ref{defn def structure}, the theorem is a consequence of its special pure case proved in \cite{bkt} and the following result of Brosnan--Pearlstein.

\begin{thm}[Cor 2.34 of \cite{bp}]

Let $\bH^r \arrow \cM$ be a lifting of the period map of an admissible variation of mixed Hodge structures over $(\Delta^\ast)^r$. If $\cM \arrow \cS(W)(\bR)$ is the map associated to the $\sl_2$-splitting, then the composition $\bH^r \arrow \cS(W)(\bR)$ is bounded on any vertical strip.

\end{thm}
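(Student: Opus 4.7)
My plan is to prove boundedness of the $\sl_2$-splitting along a vertical strip by combining the nilpotent orbit theorem (which controls the period map up to exponentially small errors) with the multivariable mixed $SL_2$-orbit theorem of Kato--Nakayama--Usui, and then by using the universal Lie-polynomial characterization of the $\sl_2$-splitting to compute it explicitly on the approximating $SL_2$-orbit.

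Step 1 (reduction to the nilpotent orbit). By Proposition \ref{nilpotent orbit}, on a vertical strip $\mathfrak{S}\subset\bH^r$ we may write
\[
\tilde\phi(z) \;=\; \exp\!\Bigl(\textstyle\sum_j z_j N_j\Bigr)\cdot\Psi(e(z)),
\]
where $\Psi\colon\Delta^r\to\check\cM$ is holomorphic. Setting $F_\infty:=\Psi(0)$, the "pure error" $\Psi(e(z))-F_\infty$ is bounded by a compactly-supported holomorphic function on $\Delta^r$, so Schmid-type distance estimates in $\check\cM$ give that $\tilde\phi(z)$ differs from the nilpotent orbit $F_{\mathrm{nil}}(z):=\exp(\sum_j z_j N_j)\cdot F_\infty$ by the action of a group element going to the identity as $y_j\to\infty$. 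Because the $\sl_2$-splitting $r\colon\cM\to\cS(W)(\bR)$ is real-analytic and continuous on $\cM$, this difference contributes a bounded quantity to $r\circ\tilde\phi$ on $\mathfrak{S}$. Hence it suffices to bound $r(F_{\mathrm{nil}}(z))$ on $\mathfrak{S}$.

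Step 2 (approximation by $SL_2$-orbits). The admissibility of the variation, together with the associated relative weight filtration $M$, equips the limit data $(F_\infty,W,N_1,\dots,N_r)$ with the structure of an admissible nilpotent orbit. The mixed $SL_2$-orbit theorem of Kato--Nakayama--Usui then attaches to $(F_\infty,W,N_1,\dots,N_r)$ a commuting family of $\sl_2$-triples $(N_j^+,Y_j,N_j)$ and an $SL_2$-orbit $F_{\sl_2}(z)$ such that on $\mathfrak{S}$ the Deligne bigrading components of $F_{\mathrm{nil}}(z)$ and $F_{\sl_2}(z)$ differ by terms that decay in suitable powers of the $y_j/y_{j+1}$. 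Since the Deligne $\delta$-splitting (hence its Hodge components) depends real-analytically on these bigrading data, and the $\sl_2$-splitting $r$ is a universal Lie polynomial in those components, the same decay passes to $r$: one has $r(F_{\mathrm{nil}}(z)) = r(F_{\sl_2}(z)) + o(1)$ as any $y_j\to\infty$, with bounded remainder uniformly on $\mathfrak{S}$.

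Step 3 (closed-form evaluation on the $SL_2$-orbit and main obstacle). On the approximating $SL_2$-orbit, the characterizing property of the $\sl_2$-splitting recalled in Section 3 (the grading of the $\sl_2$-splitting of $(\exp(iN)\cdot F,W)$ is a morphism of type $(0,0)$ for the split limit MHS) identifies $r(F_{\sl_2}(z))$ with a fixed grading built from $\sum_j y_j Y_j$, conjugated by $\exp(\sum_j x_j N_j)$. Because $\Real z_j$ is bounded on $\mathfrak{S}$ the unipotent conjugation stays in a bounded subset of $U(\bR)$, while the $Y_j$ action on $\cS(W)(\bR)$ (an affine space over $W_{-1}\End V$) is controlled by the fact that the $\sl_2$-splitting itself is preserved by the real $\sl_2$-action in the split case. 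This yields boundedness of $r(F_{\sl_2}(z))$, and combined with Step 2 completes the proof. The main obstacle is Step 2: one must genuinely establish the KNU-type approximation with uniform real-analytic control on the bigrading components, since the $\sl_2$-splitting is defined only implicitly through the Deligne $\delta$-splitting, and any crude $C^0$-bound on the distance in $\cM$ is not by itself enough to produce a bound in the affine space $\cS(W)(\bR)$ of splittings.
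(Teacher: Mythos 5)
The paper does not prove this statement: it is imported verbatim as Corollary 2.34 of Brosnan--Pearlstein, whose proof rests on the several-variable mixed $SL_2$-orbit theorem of Kato--Nakayama--Usui. So there is no internal argument to compare against; what you have written is an outline of the strategy that Brosnan--Pearlstein themselves carry out, and the question is whether your outline closes the argument on its own. It does not.

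The decisive gap is the one you flag yourself in your last sentence, and it already undermines Step 1. There you argue that since $\tilde\phi(z)$ and the nilpotent orbit $F_{\mathrm{nil}}(z)$ differ by a group element tending to the identity and $r$ is continuous, the difference $r(\tilde\phi(z))-r(F_{\mathrm{nil}}(z))$ is bounded. This does not follow: the points $F_{\mathrm{nil}}(z)$ leave every compact subset of $\cM$ as the $y_j\to\infty$, continuity of $r$ gives no uniform modulus of continuity there, and since $\cS(W)(\bR)$ is a noncompact affine space, $r(g_nF_n)-r(F_n)$ can blow up even with $g_n\to e$. (The retraction is $\bfG(\bR)$-equivariant, but the error from the nilpotent orbit theorem is a complex group element, so equivariance does not help.) The same issue recurs in Step 2: the assertion that the KNU approximation of the Deligne bigrading components ``passes to $r$'' with a uniformly bounded remainder is precisely the quantitative content of the mixed $SL_2$-orbit theorem and of Brosnan--Pearlstein's estimates, not a formal consequence of $r$ being a universal Lie polynomial in the $\delta$-components. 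Finally, the several-variable $SL_2$-orbit theorem applies on sectors where, after reordering, the ratios $y_j/y_{j+1}$ are bounded below; a vertical strip is not such a sector, so one must decompose it into finitely many such regions and control each, which your sketch omits. In short, Steps 1--3 are a reasonable roadmap of the external proof, but each quantitative input is assumed rather than established, so the proposal cannot replace the citation.
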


\section{Mixed Hodge varieties}

\subsection{Mixed Mumford-Tate groups} (cf. \cite{andre} and \cite[\S 2]{kHodge}.)

We first briefly summarize Mumford--Tate groups of mixed Hodge structures.  For simplicity we focus on rational mixed Hodge structures, though the same holds for any subfield of $\bR$.  Let $\mathbf{S}=\Res_{\C/\bR}\bfG_m$, and define the weight torus to be the diagonal $w:\bfG_{m}\to\bfS$.  For a rational mixed Hodge structure $V=(V_\Q,W,F)$, the associated Deligne torus is the homomorphism $h:\mathbf{S}_\C\to \bfGL(V_\C)$ by which $(z_1,z_2)\in\mathbf{S}_\C(\C)=\C^*\times\C^*$ acts as $z_1^pz_2^q$ on $I^{p,q}$ in the Deligne splitting of $V$.  Recall that the weight zero Hodge classes of $V$ are defined as $\Hdg_0(V)_\Q:=(W_0)_\Q\cap F^0$. 

Let $\langle V\rangle$ be the smallest full subcategory of the category of rational mixed Hodge structures which contains both $V$ and $\Q(0)$ and is closed under subquotients, $\oplus$, and $\otimes$.  The Mumford--Tate group $\MT(V)\subset\bfGL(V)$ is then the Tannakian group associated to $\langle V\rangle$ with its obvious tensor functor.  By \cite[Lemma 2]{andre}, $\MT(V)$ is equal to the largest $\Q$-subgroup of $\bfGL(V)$ which fixes $\Hdg_0(T^{m,n}(V))$ for all $m,n\geq 0$ where $T^{m,n}(V):=V^m\otimes (V^\vee)^n$.  It is connected and equal to the $\Q$-Zariski closure of $h$ in $\bfGL(V)$, is contained in $\GL(V)^W$, and if $\Gr^W V$ is polarizable then $\MT(\Gr^WV)$ is the quotient of $\MT(V)$ by its unipotent radical (cf. \cite[\S 2.4]{kHodge}).

The Mumford--Tate group of an integral mixed Hodge structure is simply the Mumford--Tate group of the associated rational mixed Hodge structure.
\subsection{Mixed Hodge varieties} 
In this section we largely follow the setup in \cite[\S3]{kHodge}, which we refer to for details (see also \cite{pink}).  The following definition serves as an abstract model for a Mumford--Tate group.

\begin{defn}\label{def mixed datum}A mixed Hodge datum is a pair $(\bfG,X_\bfG)$ where $\bfG$ is a connected linear algebraic $\Q$-group and $X_\bfG$ is a $\bfG(\bR)\bfU(\C)$-conjugacy class of homomorphisms $\bfS_\C\to \bfG_\C$ where $\bfU$ is the unipotent radical of $\bfG$ satisfying the following conditions.  For some (hence any) $h\in X_\bfG$, letting $\bfH=\bfG/\bfU$, we have
\begin{enumerate}
\item $\bfS_\C\xrightarrow{h}\bfG_\C\to\bfH_\C$ is defined over $\bR$;
\item $\bfG_{m}\xrightarrow{w}\bfS_\C\xrightarrow{h}\bfG_\C\to\bfH_\C$ is defined over $\Q$;
\item The rational mixed Hodge structure on the Lie algebra $\frak{g}$ of $\bfG$ induced by the adjoint action has $W_{-1}\frak{g}=\frak{u}$.
\end{enumerate}
A morphism of mixed Hodge data $\rho: (\bfG,X_\bfG)\to(\bfG',X'_{\bfG'})$ is a $\Q$-homomorphism $\rho:\bfG\to\bfG'$ sending $X_\bfG$ to $X'_{\bfG'}$.
\end{defn}
The first two conditions guarantee that if $\rho :\bfG\to\bfGL(V_\Q)$ is a $\Q$-representation, then $\rho\circ h$ endows $V_\Q$ with the structure of a rational mixed Hodge structure for each $h\in X_\bfG$.  If $\rho$ is moreover faithful the third condition ensures that $\bfU$ is the group acting trivially on the associated graded.  When $\rho$ is faithful the map
\[X_\bfG\to\{\mbox{rational mixed Hodge structures on $V$}\}\]
factors through a complex manifold $\cD_{\bfG,X_\bfG}$ which is independent of $\rho$.

\begin{defn}\hspace{1in}
\begin{enumerate}
\item A connected mixed Hodge datum is a triple $(\bfG,X_\bfG,\cD^+)$ where $(\bfG,X_\bfG)$ is a mixed Hodge datum and $\cD^+$ is a connected component of $\cD_{\bfG,X_\bfG}$; the stabilizer $\bfG(\bR)^+$ of $\cD^+$ in $\bfG(\bR)$ is a connected component.  We refer to $\cD^+$ as a connected mixed Hodge domain.
\item For $(\bfG,X_\bfG,\cD^+)$ a connected mixed Hodge datum and $\Gamma\subset \bfG(\Q)^+:=\bfG(\Q)\cap\bfG(\bR)^+$ an arithmetic subgroup, the associated connected mixed Hodge variety is the complex manifold $\Gamma\backslash\cD^+$.
\item A morphism $f:\cD\to\cD'$ of connected mixed Hodge domains corresponding to connected mixed Hodge data $(\bfG^{(\prime)},X^{(\prime)}_{\bfG^{(\prime)}},\cD^{(\prime)})$ is a map induced from a $\Q$-homomorphism $\rho:\bfG\to\bfG'$ sending $X_\bfG$ to $X'_{\bfG'}$ and $\cD$ to $\cD'$.  If in addition $\Gamma$ is sent to $\Gamma'$ we call the induced map $\bar f:\Gamma\backslash \cD\to\Gamma'\backslash\cD'$ a morphism of connected mixed Hodge varieties.
\item A Hodge datum $(\bfG,X_\bfG)$ is graded-polarizable if for some (hence any) $h\in X_\bfG$ and some (hence any) faithful representation $\rho:\bfG\to\bfGL(V_\Q)$ the induced mixed Hodge structure on $V_\Q$ is graded-polarizable.  In this case we say the associated connected mixed Hodge domains and varieties are graded-polarizable as well.
\end{enumerate}
\end{defn}
\begin{remark}For simplicity we only deal with connected mixed Hodge varieties, as this is all that is needed for definability questions:  a general mixed Hodge variety as in \cite{kHodge} is a finite union of connected ones.
\end{remark}

Note that any connected mixed Hodge domain $\cD^+$ has a functorial $\bR_{\alg}$-definable structure for which the action of $\bfG(\bR)^+$ is definable.

For any graded-polarizable connected mixed Hodge datum $(\bfG,X_\bfG,\cD^+)$ and a faithful $\Q$-representation $\rho:\bfG\to\bfGL(V_\Q)$ we obtain a holomorphic embedding of $\cD^+$ in a graded-polarizable mixed period domain $\cM$ as a $\rho(\bfG(\bR)^+\bfU(\C))$-orbit after choosing an integral structure for $V_\Q$ and graded polarization forms.  For a generic $V$ in this orbit we have:
\begin{enumerate}
\item $\MT(V)=\rho(\bfG)$;
\item $\rho(\bfG(\C))\cdot V $ is a closed algebraic subvariety of $\check\cM$;
\item $\rho(\bfG(\bR)^+\bfU(\bC))\cdot V$ is a semialgebraic open subset of $\rho(\bfG(\C))\cdot V $, equal to the component of $(\rho(\bfG(\C))\cdot V) \cap \cM$ containing $V$.
\end{enumerate}

\begin{thm}\label{thm def Hodge var}  Any connected graded-polarizable mixed Hodge variety has the structure of an $\bR_{\alg}$-definable analytic space which is functorial with respect to morphisms of connected mixed Hodge varieties and which agrees with that of $\Gamma\backslash \cM$ from Definition \ref{defn def structure}.
\end{thm}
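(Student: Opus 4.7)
The plan is to realize a connected graded-polarizable mixed Hodge variety inside $\bfG(\bZ)\backslash \cM$ via a faithful representation and descend the construction of Definition \ref{defn def structure}. First I fix a faithful $\bQ$-representation $\rho: \bfG \to \bfGL(V_\bQ)$ together with integral and graded-polarization data so that $\rho$ yields a holomorphic embedding $\iota: \mathcal{D}^+ \hookrightarrow \cM$ realizing $\mathcal{D}^+$ as a $\rho(\bfG(\bR)^+\bfU(\bC))$-orbit; after passing to a finite-index subgroup if necessary, $\Gamma$ embeds into $\bfG(\bZ)$.

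Next I verify that the $\sl_2$-retraction $r: \cM \to \cM_\bR$ restricts to a map $r_\mathcal{D}: \mathcal{D}^+ \to \mathcal{D}^+_\bR := \mathcal{D}^+ \cap \cM_\bR$; this uses functoriality of the $\sl_2$-splitting (given by universal Lie polynomials in the Hodge components of the functorial Deligne $\delta$-splitting) to conclude that the correction at any $F \in \mathcal{D}^+$ lies in $\Lie(\bfU)_\bR$, so $r(F) \in \bfU(\bC)\cdot F \subset \mathcal{D}^+$. I then construct a definable fundamental set for $\Gamma\backslash\mathcal{D}^+_\bR$ mimicking the proof of Proposition \ref{prop def struct real split}: the projection $\mathcal{D}^+_\bR \to \mathcal{D}^+_\bfH \subset \Omega$ is a $\bfU(\bR)$-torsor, so I take a semi-algebraic Siegel set $\mathfrak{S}_\bfH \subset \mathcal{D}^+_\bfH$ for the image $\Gamma_\bfH$ of $\Gamma$ in $\bfH(\bQ)$ (via Theorem \ref{definable structure on arithmetic varieties}), together with a bounded semi-algebraic fundamental set $B$ for the cocompact action of $\Gamma \cap \bfU(\bQ)$ on a fiber; their product $F_\mathcal{D} \subset \mathcal{D}^+_\bR$ is a fundamental set. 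Then $r_\mathcal{D}^{-1}(F_\mathcal{D})$ is a definable fundamental set for $\Gamma\backslash\mathcal{D}^+$ via Proposition \ref{definability by pull-back}, yielding the $\bR_{\alg}$-definable structure.

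Compatibility with Definition \ref{defn def structure} is then immediate: $\mathfrak{S}_\bfH$ and $B$ can be chosen inside a Siegel set, resp.\ bounded semi-algebraic set, for the ambient mixed period domain, so $F_\mathcal{D}$ is contained in a finite union of $\bfG(\bZ)$-translates of the ambient fundamental set, whence the induced map $\Gamma\backslash\mathcal{D}^+ \to \bfG(\bZ)\backslash\cM$ is $\bR_{\alg}$-definable. Independence from $\rho$ follows by taking the diagonal of two faithful representations and applying Proposition \ref{definability by pull-back} twice; functoriality under a morphism of Hodge data $(\bfG, X_\bfG) \to (\bfG', X'_{\bfG'})$ is handled similarly by choosing a faithful representation of $\bfG'$ whose restriction along the morphism remains faithful, and reducing to compatibility with $\cM$. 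The main obstacle is the construction of the Siegel-type fundamental set for $\mathcal{D}^+_\bR$---in particular verifying the Siegel finiteness property that $F_\mathcal{D}$ meets only finitely many $\Gamma$-translates of itself---which combines classical reduction theory for the reductive arithmetic group $\Gamma_\bfH$ with the boundedness of $B$ under the $\Gamma \cap \bfU(\bQ)$-action, along the same lines as the ambient case already handled in Proposition \ref{prop def struct real split}.
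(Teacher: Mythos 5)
Your overall strategy coincides with the paper's: pull back the $\sl_2$-retraction along a faithful representation $\rho:\bfG\to\bfGL(V_\bQ)$ embedding $\cD^+\hookrightarrow\cM$, build a fundamental set for $\Gamma\backslash\cD^+_\bR$ from a Siegel set for $\Gamma_\bfH\backslash\cD^+_{\Gr}$ together with a bounded semi-algebraic fundamental domain for $\Gamma\cap\bfU(\bQ)$ acting on $\cS(W\frak{g})(\bR)$, and take the preimage under $r$. The part that needs attention is how you establish functoriality.

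The genuine gap is in your final paragraph. The proposed reduction, ``choosing a faithful representation of $\bfG'$ whose restriction along the morphism remains faithful,'' is impossible whenever the morphism $\rho:\bfG\to\bfG'$ of mixed Hodge data has nontrivial kernel, and by Definition \ref{def mixed datum} such morphisms need not be injective (for instance $\bfG\twoheadrightarrow\bfG/\bfU=\bfH$ is a morphism of mixed Hodge data). The paper avoids this by isolating as a lemma that the $\sl_2$-retraction $r:\cD^+\to\cD^+_\bR$ commutes with \emph{arbitrary} morphisms of connected mixed Hodge domains, proved intrinsically at the level of Lie algebras: for $h\in X_\bfG$, $d\rho:\frak{g}\to\frak{g}'$ is a morphism of the induced mixed Hodge structures, the Deligne $\delta$-operator of $\frak{g}$ lies in $\ad\frak{g}_\bR$ and is sent by $d\rho$ to that of $\frak{g}'$, and the $\sl_2$-correction is a universal Lie polynomial in $\delta$. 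With that lemma in hand, both independence of $\rho$ (apply it to the identity morphism with two different faithful representations) and functoriality follow simultaneously, since the whole construction depends only on the intrinsic data $r$ and the fundamental set on $\cD^+_\bR$. Your argument that $r$ restricts to $\cD^+$ already uses this functoriality of the $\delta$-splitting, so the right ingredient is present, but you need to deploy it to replace the flawed representation-theoretic reduction rather than merely to verify that $r$ preserves $\cD^+$.

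A smaller imprecision: the logarithm of the $\sl_2$-correction lands in $\Lie(\bfU)_\bC$, not $\Lie(\bfU)_\bR$ as you write---already for the Deligne $\delta$-splitting the correcting group element is $e^{-i\delta}$ with $\delta$ real. Your ultimate conclusion $r(F)\in\bfU(\bC)\cdot F\subset\cD^+$ is nevertheless correct.
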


Before the proof we make some preliminary observations.  For any connected mixed Hodge datum $(\bfG,X_\bfG,\cD^+)$ we define the real-split locus $\cD^+_\bR\subset \cD^+$ as the locus of $h\in \cD^+$ whose Deligne torus is defined over $\bR$, and likewise define the real split locus of any connected mixed Hodge variety as $(\Gamma\backslash\cD^+)_\bR:=\Gamma\backslash\cD^+_\bR$.  Evidently both are $\bR_{\alg}$-definable subspaces and morphisms preserve the real split loci and their definable structures.

For any graded-polarized connected mixed Hodge datum $(\bfG,X_\bfG,\cD^+)$, we have a natural mixed Hodge datum $(\bfH,X_\bfH,\cD^+_{\Gr})$ of the associated graded.  As in section \ref{sect period domains}, we have a natural semi-algebraic identification 
\begin{equation}\cD^+_\bR\cong \cD_{\Gr}^+\times \cS(W\frak{g})(\bR)\label{eq product}\end{equation} where $\cS(W\frak{g})$ is the variety of splittings of the weight filtration of the Lie algebra $\frak{g}$ of $\bfG$, since $\bfU(\bR)=\exp(W_{-1}\frak{g}_\bR)$ acts simply transitively on $\cS(W\frak{g})(\bR)$ by \cite[Prop. 2.2]{cks}.

\begin{prop} The real split locus $(\Gamma\backslash \cD^+)_\bR$ of any connected graded-polarizable mixed Hodge variety admits a structure of a $\bR_{\alg}$-definable topological space characterized by the following property:  for any semi-algebraic Siegel set $\mathfrak{S}\subset(\cD_{\Gr}^+)_\bR$ and bounded semi-algebraic $\Sigma\subset \cS(W\frak{g})(\bR) $, the map $\mathfrak{S}\times\Sigma\to \Gamma\backslash\cM_\bR$ is $\bR_{\alg}$-definable.  Moreover, the definable structure is compatible with morphisms of connected mixed Hodge varieties. 
\end{prop}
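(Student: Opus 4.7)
The plan is to mimic the proof of Proposition \ref{prop def struct real split}, replacing the graded-polarized mixed period domain $\cM$ with the connected mixed Hodge domain $\cD^+$. The essential ingredient is the $\bfG(\bR)^+$-equivariant semi-algebraic product identification \eqref{eq product}, namely $\cD^+_\bR\cong \cD^+_{\Gr}\times\cS(W\frak{g})(\bR)$, together with the fact that $\bfU(\bR)=\exp(W_{-1}\frak{g}_\bR)$ acts simply transitively on the second factor by \cite[Prop.~2.2]{cks}.

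Set $\Gamma_\bfU:=\Gamma\cap\bfU(\Q)$ and let $\Gamma_\bfH$ be the image of $\Gamma$ in $\bfH(\Q)$; both are arithmetic, and $\Gamma_\bfU$ is a cocompact lattice in $\bfU(\bR)$. I choose a bounded semi-algebraic fundamental set $B\subset\cS(W\frak{g})(\bR)$ for the action of $\Gamma_\bfU$, and a definable fundamental set $F\subset\cD^+_{\Gr}$ for $\Gamma_\bfH\backslash\cD^+_{\Gr}$ arising from a semi-algebraic Siegel set via Theorem \ref{definable structure on arithmetic varieties} applied to $(\bfH,X_\bfH,\cD^+_{\Gr})$, and use $F\times B$ as a fundamental set for the $\Gamma$-action on $\cD^+_\bR$ to apply Proposition \ref{prop fun sets}. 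Verifying that $F\times B$ is a fundamental set is the main step: covering follows by first lifting an element of $\Gamma_\bfH$ carrying the graded coordinate into $F$ and then translating by $\Gamma_\bfU$ to put the splitting into $B$; the finiteness of $\{\gamma:\gamma(F\times B)\cap(F\times B)\neq\emptyset\}$ is controlled by first pinning down the image in $\Gamma_\bfH$ using that $F$ is a fundamental set, and then pinning down the $\Gamma_\bfU$-component using boundedness of $B$ together with boundedness of each of the finitely many preimage translates. The characterization for $\mathfrak{S}\times\Sigma$ then follows by the same two-step argument, using the Siegel-set property on the $\cD^+_{\Gr}$ factor and boundedness of $\Sigma$ on the $\cS(W\frak{g})(\bR)$ factor.

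Functoriality under a morphism of connected mixed Hodge varieties follows from Proposition \ref{definability by pull-back}, since the underlying homomorphism $\rho:\bfG\to\bfG'$ sends unipotent radicals to unipotent radicals and is thus compatible with the product decomposition \eqref{eq product}; in particular $\rho$-pulling back a product fundamental set downstairs recovers a product fundamental set (or a finite union thereof) upstairs. The main technical subtlety throughout is the handling of the exact sequence $1\to\Gamma_\bfU\to\Gamma\to\Gamma_\bfH$ and its possibly non-split finite cokernel, which prevents one from literally factoring the group action as a direct product; this is absorbed by the flexibility of Proposition \ref{prop fun sets} together with the observation that replacing a fundamental set by a finite union of $\Gamma$-translates produces the same $\bR_{\alg}$-definable structure.
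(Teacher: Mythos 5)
Your treatment of the first part is essentially the paper's: you mimic Proposition~\ref{prop def struct real split} by producing the product fundamental set $F\times B$ from a Siegel fundamental set $F$ for $\Gamma_\bfH\backslash\cD^+_{\Gr}$ and a bounded semi-algebraic fundamental set $B$ for $\Gamma_\bfU$ acting on $\cS(W\frak{g})(\bR)$, then verify covering and the finiteness condition by the two-step argument. That part is fine.

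The functoriality argument, however, has a genuine gap. A morphism of connected mixed Hodge varieties is induced by a $\Q$-homomorphism $\rho:\bfG\to\bfG'$ with $\rho(\Gamma)\subseteq\Gamma'$, not by a single group $\Gamma$ acting on both sides; Proposition~\ref{definability by pull-back} is stated only for a $\Gamma$-equivariant map between two $\Gamma$-spaces and does not apply here. More importantly, the assertion that ``$\rho$-pulling back a product fundamental set downstairs recovers a product fundamental set (or a finite union thereof) upstairs'' is false in general: the preimage under $\cD^+_{\Gr}\to\cD^{\prime+}_{\Gr}$ of a semi-algebraic Siegel set is not a Siegel set, and the preimage under $\cS(W\frak{g})(\bR)\to\cS(W'\frak{g}')(\bR)$ of a bounded set is typically unbounded (the map on splitting varieties is an affine surjection with positive-dimensional fibers whenever $\rho$ is not injective on unipotent radicals). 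The correct direction is forward, not backward: one uses that the product identification~\eqref{eq product} is functorial, that the \emph{image} of a Siegel set under a morphism of the associated arithmetic quotients is definable by the functoriality built into Theorem~\ref{definable structure on arithmetic varieties}, and that the \emph{image} of a bounded set of Deligne gradings $\bfG_m\to\bfG$ under $\rho$ is again bounded. This last boundedness-of-image statement is the precise replacement for your pullback claim, and it is what makes the argument go through.

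A smaller remark: the exact sequence you invoke is $1\to\Gamma_\bfU\to\Gamma\to\Gamma_\bfH\to 1$ with $\Gamma_\bfH$ \emph{defined} as the image of $\Gamma$ in $\bfH(\Q)$, so there is no cokernel to worry about; the only subtlety is non-uniqueness of lifts, which your two-step finiteness argument already handles correctly.
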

\begin{proof}The first part is the same as in the proof of Proposition \ref{prop def struct real split}.  As the identification \eqref{eq product} is clearly functorial in morphisms of connected mixed Hodge data, the second statement follows from Theorem \ref{definable structure on arithmetic varieties} and the fact that a bounded set of Deligne gradings $\bfG_m\to\bfG$ is mapped to a bounded set.
\end{proof}
\begin{proof}[Proof of Theorem \ref{thm def Hodge var}]We start by generalizing the $\sl_2$-splitting:
\begin{lemma}  For any connected graded-polarized mixed Hodge domain $\cD^+$ there is a $\bR_{\alg}$-definable $\bfG(\bR)^+$-equivariant retraction $r:\cD^+\to \cD^+_\bR$ which is compatible with morphisms of connected mixed Hodge domains.
\end{lemma}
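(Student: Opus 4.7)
The plan is to transport the $\sl_2$-splitting retraction $r_\cM:\cM\to\cM_\bR$ along an embedding of $\cD^+$ into a graded-polarized mixed period domain, and verify that it preserves $\cD^+$. Fix a faithful $\Q$-representation $\rho:\bfG\hookrightarrow\bfGL(V_\Q)$; together with a choice of integral structure and graded polarization forms, this realizes $\cD^+$ as a $\rho(\bfG(\bR)^+\bfU(\bC))$-orbit inside a graded-polarized mixed period domain $\cM$, as reviewed just before the statement. I then aim to show $r_\cM$ restricts to a retraction $r:\cD^+\to\cD^+_\bR$ that is $\bfG(\bR)^+$-equivariant, $\bR_{\alg}$-definable, independent of $\rho$, and functorial in morphisms of connected mixed Hodge data.

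The main technical step is to check $r_\cM(\cD^+)\subseteq\cD^+$. For $h\in\cD^+$ we have $r_\cM(h)=\exp(-i\xi(h))\cdot h$, where $\xi(h)\in L^{-1,-1}_{(W,F_h),\bR}\subset\End(V)_\bR$ is the $\sl_2$-splitting element, given by universal Lie polynomials in the Hodge components of the Deligne bigrading. Since the Deligne torus of $h$ factors through $\bfG_\bC$, the adjoint bigrading of $\End(V_\bC)$ restricts to one on $\frak{g}_\bC$, making $\frak{g}\hookrightarrow\End(V)$ a sub-MHS; axiom (3) of Definition \ref{def mixed datum} then supplies $W_{-1}\frak{g}=\frak{u}$. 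The key claim is that $\xi(h)\in\frak{g}_\bR$; granting it, since $\xi(h)\in W_{-1}\End(V)\cap\frak{g}=\frak{u}$, we get $\exp(-i\xi(h))\in\bfU(\bC)\subset\bfG(\bR)^+\bfU(\bC)$, hence $r_\cM(h)\in\cD^+$, and being real-split by construction $r_\cM(h)\in\cD^+_\bR$.

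$\bfG(\bR)^+$-equivariance and $\bR_{\alg}$-definability of $r$ are inherited from the corresponding properties of $r_\cM$ together with the definable $\bfG(\bR)^+$-equivariant inclusion $\cD^+\hookrightarrow\cM$. Independence of $\rho$: given two faithful representations $\rho_1,\rho_2$, comparing them via the diagonal inclusion into $\rho_1\oplus\rho_2$ and using functoriality of the $\sl_2$-splitting under projections forces the two resulting retractions to agree. For a morphism $\psi:(\bfG,X_\bfG,\cD^+)\to(\bfG',X'_{\bfG'},\cD^{+\prime})$, I would pull back a faithful representation of $\bfG'$ through $\psi$ and apply functoriality of the $\sl_2$-splitting for the induced $\bfG(\bC)$-equivariant embedding of mixed period domains to conclude $r'\circ f=f\circ r$.

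The principal obstacle is verifying $\xi(h)\in\frak{g}_\bR$. This is a manifestation of the general principle---implicit in the constructions of \cite{cks} and \cite{bp}---that intrinsic splittings of an MHS respect the Mumford--Tate group. The cleanest way to substantiate it is to observe that $\xi$ is characterized by a universal Lie polynomial in the Deligne projectors, whose image on the adjoint MHS of $\End(V)$ preserves the sub-MHS structure on $\frak{g}$; equivalently, the Deligne $\delta$-splitting (and thence the $\sl_2$-splitting via the universal polynomial passage) of a MHS with MT group inside $\bfG$ lies in $\frak{u}_\bR^{-1,-1}\subset\frak{g}_\bR$, and this property propagates under the Lie-polynomial refinement from $\delta$ to $\xi$.
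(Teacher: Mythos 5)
Your proposal is correct and follows essentially the same route as the paper: embed $\cD^+$ into a mixed period domain via a faithful representation, then use the fact that the $\sl_2$-splitting $\xi$ is given by universal Lie polynomials in the $\delta$-splitting (which lies in $\frak{g}_\bR$) to conclude both that the retraction preserves $\cD^+$ and that it is functorial. Your write-up is somewhat more explicit than the paper about checking $r_\cM(\cD^+)\subseteq\cD^+$ and about independence of $\rho$, and the paper gives a sharper citation for the key fact ($\delta\in\frak{g}_\bR$ follows from the proof of Proposition 2.2 in \cite{cks}), but the underlying argument is the same.
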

\begin{proof}A faithful $\Q$-representation $\rho:\bfG\to \bfGL(V_\Q)$ yields an embedding $\iota :\cD^+\to\cM$ into a graded-polarizable mixed period domain and we may pull back the $\sl_2$-retraction $r:\cM\to\cM_\bR$ to $\cD^+$.

It remains to show that the $\sl_2$-retraction commutes with a morphism $\cD^+\to \cD^{\prime +}$ induced by a morphism of mixed Hodge data $\rho:(\bfG,X_\bfG)\to(\bfG',X'_{\bfG'})$.  For any $h\in X_\bfG$, $\rho$ induces a morphism $d\rho:\frak{g}\to\frak{g}'$ of mixed Hodge structures induced by $h$ and $\rho\circ h$.  The Deligne $\delta$-splitting of $\frak{g}$ is $\Ad(e^{-i\delta})\cdot h$ where $\delta\in (L^{-1,-1}_\frak{g})_\bR$ is the unique element for which $T=\Ad(e^{-2 i\delta})\bar T$, where $T$ is the Deligne grading \cite[Prop. 2.20]{cks}.  From the proof of \cite[Prop. 2.2]{cks}, $\delta$ is contained in $\ad\frak{g}_\bR$, in fact in the Lie algebra generated by the weight torus and its conjugate.  Obviously $d\rho(T)$ is the Deligne grading of $\rho\circ h$, and so $d\rho(\delta)$ is the $\delta$ operator for $\frak{g}'$.  As the $\sl_2$-splitting is defined by universal Lie polynomials in $\delta$, the result follows.
\end{proof}

As in Definition \ref{defn def structure}, we endow $\Gamma\backslash\cM$ with a definable structure coming from the definable set $r^{-1}(\Xi)$ for a definable fundamental set $\Xi$ for $(\Gamma\backslash\cM)_\bR$.  By the lemma this definable structure is compatible with morphisms.
\end{proof}

\subsection{(Weakly) special subvarieties}
Briefly, as in \cite{kHodge} we define the collection of weakly special subvarieties of connected mixed Hodge varieties to be the minimal collection which is closed under finite unions, taking connected components, and taking images and preimages under morphisms of mixed Hodge varieties and which contains points.  For an algebraic variety $S$ with an admissible variation of integral graded-polarized mixed Hodge structures $(\cL, \cW, F)$ with monodromy contained in $\Gamma$, we define the weakly special subspaces of $S$ to be the pull-backs of weakly special subvarieties of $\Gamma\backslash\cM$ along the associated period map $\phi:S\to\Gamma\backslash\cM$ with their natural structure as locally closed $\bR_{\an,\exp}$-definable analytic subspaces, by Theorem \ref{definability of period maps}.  From definable GAGA \cite[Theorem 3.1]{bbt1} we conclude:
\begin{cor}Weakly special subspaces of $S$ are algebraic.
\end{cor}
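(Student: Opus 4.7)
The plan is to reduce the algebraicity of weakly special subspaces of $S$ to an application of definable GAGA \cite[Theorem 3.1]{bbt1}, using the definability results already established in the paper. The workflow is: (i) show that weakly special subvarieties of $\Gamma\backslash\cM$ are $\bR_{\alg}$-definable analytic subspaces; (ii) pull back along $\phi$, which is $\bR_{\an,\exp}$-definable analytic, to get $\bR_{\an,\exp}$-definable analytic subspaces of $S$; (iii) invoke definable GAGA to conclude these are algebraic.

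For step (i), I would argue by induction on the construction of weakly special subvarieties of a connected mixed Hodge variety. Points are trivially $\bR_{\alg}$-definable analytic. By Theorem \ref{thm def Hodge var} every morphism of connected mixed Hodge varieties is $\bR_{\alg}$-definable analytic, so images and preimages of $\bR_{\alg}$-definable analytic subspaces under such morphisms remain $\bR_{\alg}$-definable analytic (images are proper because the maps of Hodge data induce proper morphisms on the quotient level after suitable reductions, but more simply because in the o-minimal setting definable images of definable sets are definable, and the analytic structure is preserved because the morphism is analytic). Finite unions are evidently definable analytic, and connected components of definable analytic subspaces are definable analytic by o-minimality. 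Hence the minimal collection defining weakly special subvarieties consists of $\bR_{\alg}$-definable analytic subspaces of $\Gamma\backslash\cM$.

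For step (ii), Theorem \ref{definability of period maps} gives that $\phi:S\to\Gamma\backslash\cM$ is $\bR_{\an,\exp}$-definable. Since pullback of a locally closed $\bR_{\alg}$-definable analytic subspace under a $\bR_{\an,\exp}$-definable analytic map is a locally closed $\bR_{\an,\exp}$-definable analytic subspace, this gives exactly the structure on weakly special subspaces of $S$ described in the paragraph preceding the corollary.

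For step (iii), any closed $\bR_{\an,\exp}$-definable analytic subspace of the algebraic variety $S$ is algebraic by \cite[Theorem 3.1]{bbt1}. A locally closed such subspace $Z\subset S$ can be handled by taking its (definable, analytic) closure $\bar Z$, which is then algebraic; $Z$ is open in $\bar Z$ and definable, so its complement $\bar Z\setminus Z$ is a closed definable analytic subspace of $\bar Z$ (hence of $S$), again algebraic by GAGA, and therefore $Z$ is a locally closed algebraic subspace of $S$. I do not expect any serious obstacle here: the two nontrivial inputs, namely definability of the period map (Theorem \ref{definability of period maps}) and the functorial $\bR_{\alg}$-definable structure on mixed Hodge varieties (Theorem \ref{thm def Hodge var}), have already been established, and the remainder is a formal combination of o-minimal stability and definable GAGA.
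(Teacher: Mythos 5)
Your proposal is correct and follows the same route as the paper: show weakly special subvarieties of $\Gamma\backslash\cM$ carry natural $\bR_{\alg}$-definable analytic structure via Theorem \ref{thm def Hodge var}, pull back along the $\bR_{\an,\exp}$-definable period map from Theorem \ref{definability of period maps}, and apply definable GAGA \cite[Theorem 3.1]{bbt1}. The paper itself compresses all of this into the definition of weakly special subspaces plus a one-line invocation of GAGA, so your extra exposition is welcome. One caveat: your parenthetical in step (i) claiming that ``the analytic structure is preserved because the morphism is analytic'' for images is not a valid general principle---images of complex analytic maps are not automatically analytic subspaces, even definably; you need a definable proper mapping theorem, or better, the observation that the image of a morphism of connected mixed Hodge varieties is again a (union of) connected mixed Hodge varieties and hence inherits its definable analytic structure directly from Theorem \ref{thm def Hodge var}. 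Your first-mentioned route (properness) is the right instinct; the ``more simply'' alternative would fail as stated. Similarly, your step (iii) treatment of locally closed $Z$ asserts that $\bar Z\setminus Z$ is a closed definable \emph{analytic} subspace without justification---this is fine in practice here, but is not automatic and is better handled by noting that weakly special subvarieties can be built from closed pieces.
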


As a concrete example of the corollary, we specifically treat the case of Noether--Lefschetz loci in more detail, and leave the general setup to the reader.  For any $V\in \cM$, define the Noether--Lefschetz locus \[\cNL(V)=\{V'\in\cM\mid \MT(V')\subset\MT(V)\}\subset\cM.\]
and let $\NL(V)\subset \Gamma\backslash\cM$ be the image.  The following is the mixed analog of \cite[Theorem II.C.1]{ggk}; the same proof works with essentially no modification.
\begin{prop}  For $V\in \cM$, let $\bfG:=\MT(V)$ with unipotent
  radical $\bfU$ and let $X_\bfG$ be the $\bfG(\bR)\bfU(\C)$-conjugacy
  class of the Deligne torus of $V$.  Then the component of $\cNL(V)$ passing through $V$ is the connected mixed Hodge domain for $(\bfG,X_\bfG)$ containing $V$.
\end{prop}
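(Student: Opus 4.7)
The plan is to mimic the proof of \cite[Theorem II.C.1]{ggk} for pure variations, with only superficial modifications for the mixed setting. Write $\cD_\bfG^+:=\bfG(\bR)^+\bfU(\C)\cdot V\subseteq \cM$ for the connected mixed Hodge domain through $V$, and let $C$ denote the component of $\cNL(V)$ through $V$.

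The first step is to realize $\cNL(V)$ as a closed complex analytic subvariety of $\cM$. By the characterization recalled above, $\bfG=\MT(V)$ is the stabilizer of $\bigcup_{m,n}\Hdg_0(T^{m,n}(V))_\Q$, so the condition $\MT(V')\subseteq\bfG$ is equivalent to asserting that every Hodge class $t\in\Hdg_0(T^{m,n}(V))_\Q$ remains a weight-zero Hodge class for $V'$. Since $t$ is automatically in $W_0T^{m,n}$, this reduces to the closed analytic condition $t\in F^0T^{m,n}(V')_\C$, and Noetherianity ensures that locally only finitely many such conditions are needed. With this in hand, the inclusion $\cD_\bfG^+\subseteq C$ is elementary: for any $g\in\bfG(\bR)^+\bfU(\C)\subseteq\bfG(\C)$ the Deligne torus $gh_Vg^{-1}:\bfS_\C\to\bfG_\C$ of $g\cdot V$ has $\Q$-Zariski closure $\MT(g\cdot V)\subseteq\bfG$, and $\cD_\bfG^+$ is connected since $\bfG$ is connected by the definition of a mixed Hodge datum.

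For the reverse inclusion, let $\bfG_{\mathrm{gen}}$ be the generic Mumford--Tate group along $C$. Upper semi-continuity of Mumford--Tate groups applied to the specialization $V$ yields $\bfG=\MT(V)\subseteq\bfG_{\mathrm{gen}}$, while the defining condition of $\cNL(V)$ forces $\bfG_{\mathrm{gen}}\subseteq\bfG$, so $\bfG_{\mathrm{gen}}=\bfG$. Consequently $\{V'\in C\,:\,\MT(V')=\bfG\}$, which is contained in $\cD_\bfG^+$, is open and dense in the irreducible analytic set $C$. Using the observation following Definition \ref{def mixed datum} that $\cD_\bfG^+$ coincides with the connected component through $V$ of the intersection of the algebraic $\bfG(\C)$-orbit $\bfG(\C)\cdot V\subseteq\check\cM$ with the semi-algebraic open $\cM$---hence is a locally closed complex submanifold of $\cM$ of the same dimension as $C$---the density gives $C=\cD_\bfG^+$.

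The main obstacle I anticipate is confirming that $\cD_\bfG^+$ genuinely equals $C$ and not merely a dense open subset of it, i.e.\ that no limit point of $\cD_\bfG^+$ in $\cM$ lies outside $\cD_\bfG^+$. This is the one place where the mixed case differs subtly from the pure case (where $\cD_\bfG^+$ is the quotient of $\bfG(\bR)^+$ by a compact stabilizer, hence automatically closed), and it is handled using the orbit description after Definition \ref{def mixed datum}: any such limit point would lie in the same $\bfG(\C)$-orbit in $\check\cM$ as $V$, but in a different $\bfG(\bR)^+\bfU(\C)$-component of $(\bfG(\C)\cdot V)\cap\cM$, contradicting connectedness of the limit procedure inside $C$.
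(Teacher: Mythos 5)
The paper's own ``proof'' is to invoke \cite[Theorem II.C.1]{ggk} and say the same argument works, so your attempt to reconstruct that argument is the right thing to do. The overall skeleton is correct: $\cD_\bfG^+\subseteq C$ by connectedness; $\cD_\bfG^+$ is closed in $\cM$ (hence in $C$) because $\bfG(\bC)\cdot V$ is closed algebraic in $\check\cM$ and $\cD_\bfG^+$ is a connected component of $(\bfG(\bC)\cdot V)\cap\cM$ --- these are the observations (2) and (3) the paper records after Definition \ref{def mixed datum}; and then one needs $\cD_\bfG^+$ open in $C$, after which connectedness finishes.

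The gap is precisely at the openness step. You assert that $\{V'\in C:\MT(V')=\bfG\}$ \emph{is contained in} $\cD_\bfG^+$ and then use its density to conclude. But this containment is essentially the statement being proved: knowing $\MT(V')=\bfG$ tells you the Deligne torus $h_{V'}$ factors through $\bfG_\bC$, but it does not tell you that $h_{V'}$ lies in the particular $\bfG(\bR)^+\bfU(\bC)$-conjugacy class $X_\bfG$ of $h_V$. A priori there could be several such conjugacy classes, and the generic locus of $C$ could wander between $\bfG(\bR)^+\bfU(\bC)$-orbits other than $\cD_\bfG^+$. The way GGK rule this out is by a tangent-space computation at $V$: one shows $T_V\widecheck{\cNL}(V)=\frak g_\bC/(F^0\frak g_\bC)$ equals the tangent space to the orbit $\bfG(\bC)\cdot V\subseteq\check\cM$, so $\widecheck{\cNL}(V)$ and the orbit agree near $V$ and hence $\cD_\bfG^+$ is open in $C$. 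That computation (or some replacement for it) is what is missing from your argument; without it the density claim is circular.

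Two smaller inaccuracies. First, $C$ is a connected component of an analytic set and need not be irreducible, so ``irreducible analytic set $C$'' is unwarranted (the argument should go through connectedness, not irreducibility). Second, the last paragraph misdiagnoses the closure issue: a limit point of $\cD_\bfG^+$ does lie in $\bfG(\bC)\cdot V$ precisely because that orbit is already closed in $\check\cM$, and then $\cD_\bfG^+$, being a connected component of $(\bfG(\bC)\cdot V)\cap\cM$, is automatically closed in $\cM$; there is no contradiction to derive from ``connectedness of the limit procedure.'' Invoking observation (2) directly is both cleaner and correct.
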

\begin{cor}  $\NL(V)\subset \Gamma\backslash\cM$ is a definable analytic subspace.
\end{cor}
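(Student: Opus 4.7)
The plan is to verify definability of $\NL(V)\subseteq \Gamma\backslash\cM$ by working on the preimage in a definable fundamental set for $\Gamma$, reducing an \emph{a priori} infinite $\Gamma$-disjunction to a finite one via boundedness of Hodge classes.

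By Chevalley's theorem, $\bfG=\MT(V)$ is the stabilizer in $\bfGL(V_\Q)$ of finitely many integral tensors $t_1,\ldots,t_k$, each $t_i\in T^{m_i,n_i}(V_\Z)$ a weight-zero Hodge class of norm $d_i$. The condition $\MT(V')\subseteq\bfG$ then amounts to requiring each $t_i$ to remain a Hodge class in $T^{m_i,n_i}(V')$, a closed algebraic condition on $V'\in\cM$ (namely, vanishing of the image of $t_i$ in $T^{m_i,n_i}(V'_\C)/F^0$). Hence $\cNL(V)\subset\cM$ is a closed $\bR_{\alg}$-definable analytic subspace, and by the preceding proposition its component through $V$ coincides with the connected mixed Hodge subdomain $\cD^+$ attached to $(\bfG,X_\bfG)$, which confirms its analytic (not merely set-theoretic) nature.

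Fix a definable fundamental set $\mathfrak{F}:=r^{-1}(\Xi)\subseteq\cM$ for $\Gamma\backslash\cM$ as in Definition \ref{defn def structure}. A point $V'\in\mathfrak{F}$ maps into $\NL(V)$ precisely when there exists a single $\gamma\in\Gamma$ with $\gamma^{-1}\cdot t_i$ a Hodge class in $T^{m_i,n_i}(V')$ for every $i$ simultaneously. The main obstacle is to reduce this to a \emph{finite} disjunction over $\gamma$, which amounts to a Cattani--Deligne--Kaplan type finiteness statement in the mixed setting: the integral tensors in $T^{m_i,n_i}(V_\Z)$ that occur as Hodge classes of norm $d_i$ in some $V'\in\mathfrak{F}$ form a finite set. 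This follows from the generalization of Theorem \ref{definability of period maps} to the bundle of bounded-norm Hodge classes $\Hdg_0^{d_i}(\cV)\subset\cV_\C$ noted immediately after the statement of the main theorem, combined with the o-minimal principle that a definable subset of a discrete lattice has finitely many connected components and so is finite. Consequently only finitely many $\gamma\in\Gamma$ contribute, and $\mathfrak{F}\cap(\Gamma\cdot\cNL(V))$ is a finite union of $\bR_{\alg}$-definable analytic subsets of $\mathfrak{F}$. By Proposition \ref{prop fun sets}, $\NL(V)$ is therefore a definable analytic subspace of $\Gamma\backslash\cM$, with analytic structure inherited from that of $\cNL(V)\subset\cM$.
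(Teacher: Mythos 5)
Your argument correctly identifies the two main points---that $\cNL(V)$ is cut out in $\cM$ by algebraic conditions, and that one must reduce the a priori infinite union over $\Gamma$ to a finite one---but the route you take through the bounded-norm Hodge class finiteness is not the one the paper uses, and as written it contains a gap at exactly the step where the finiteness is invoked.

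The paper's proof is shorter. By the preceding proposition, each connected component of $\NL(V)$ is a connected mixed Hodge variety and hence, by Theorem \ref{thm def Hodge var}, a definable analytic subspace; the whole content is then to show that $\NL(V)$ has only finitely many components. For this the paper observes (as you do, essentially by Chevalley/Andr\'e) that $\cNL(V)=\widecheck\cNL(V)\cap\cM$ for a closed algebraic subvariety $\widecheck\cNL(V)\subset\check\cM$, and then simply notes that the intersection of the algebraic (hence $\bR_{\alg}$-definable) set $\widecheck\cNL(V)$ with the definable fundamental set $r^{-1}(\Xi)$ is definable and so has finitely many connected components. This replaces the lattice-counting you attempt with a purely o-minimal finiteness applied to a definable set, and in particular does not require any boundedness estimate for integral Hodge tensors over the fundamental set.

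The gap in your version is the sentence claiming that ``the integral tensors in $T^{m_i,n_i}(V_\Z)$ that occur as Hodge classes of norm $d_i$ in some $V'\in\mathfrak{F}$ form a finite set,'' together with the justification offered for it. First, the remark following Theorem \ref{definability of period maps} about $\Hdg_0^{d}(\cV)\subset\cV_\C$ concerns an admissible variation over an \emph{algebraic} base $S$; the definable fundamental set $\mathfrak{F}=r^{-1}(\Xi)\subset\cM$ is not such a base, and there is no period map from an algebraic variety whose image is $\mathfrak{F}$, so that remark does not apply as stated. Second, the appeal to ``the o-minimal principle that a definable subset of a discrete lattice has finitely many connected components and so is finite'' does not parse: the lattice $T^{m_i,n_i}(V_\Z)$ is not a definable set, and a definable subset of $T^{m_i,n_i}(V_\bR)$ with finitely many connected components (a line, say) can still meet the lattice in infinitely many points. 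To conclude finiteness one would need the projection of the relevant definable incidence set to $T^{m_i,n_i}(V_\bR)$ to be \emph{bounded}; in the mixed setting this is a genuine estimate of Brosnan--Pearlstein type and not a formal consequence of o-minimality. The paper's argument via the algebraicity of $\widecheck\cNL(V)$ is designed precisely to sidestep this difficulty, and you should adopt that route rather than re-deriving a CDK-type lattice finiteness.
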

\begin{proof}From the proposition and Theorem \ref{thm def Hodge var}, each connected component of $\NL(V)$ is a definable analytic subspace, and it remains to check there are finitely many components.  For $V'\in\cM$, to have $\MT(V')\subset\MT(V)$ we must check if finitely many vectors in finitely many $T^{m,n}(V')$ are Hodge, that is, contained in $F^0T^{m,n}(V')\cap W_0T^{m,n}(V')$.  Thus, $\cNL(V)=\widecheck\cNL(V)\cap \cM$ for a natural algebraic subvariety $\widecheck\cNL(V)\subset\check\cM$.  As $\widecheck\cNL(V)$ intersects a definable fundamental set for $\Gamma\backslash\cM$ in finitely many components, the result follows.
\end{proof}
For any algebraic variety $S$ with an admissible variation of integral graded-polarized mixed Hodge structures $(\cL, \cW, F)$ with monodromy contained in $\Gamma$ and any $s\in S$ we define $\NL_s\subset S$ to be the pull back of $\NL(\cL_s, \cW_s, F_s)\subset\Gamma\backslash \cM$ with its natural structure as a definable analytic subspace.

\begin{cor}$\NL_s\subset S$ is algebraic.
\end{cor}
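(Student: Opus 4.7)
The proof plan is straightforward given the preceding results, and essentially mimics the proof of the corollary about weakly special subspaces.

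The strategy is to show that $\NL_s$ is an $\bR_{\an,\exp}$-definable analytic subspace of $S$, and then invoke definable GAGA \cite[Theorem 3.1]{bbt1} to upgrade definable analytic to algebraic. By Theorem \ref{definability of period maps}, the period map $\phi: S\to\Gamma\backslash\cM$ is $\bR_{\an,\exp}$-definable with respect to the $\sl_2$-definable structure on the target. By the previous corollary, $\NL(V_s):=\NL(\cL_s,\cW_s,F_s)\subset \Gamma\backslash\cM$ is a definable analytic subspace. Pulling back a definable analytic subspace along a definable holomorphic map yields a definable analytic subspace, so $\NL_s=\phi^{-1}(\NL(V_s))$ acquires its natural structure as an $\bR_{\an,\exp}$-definable analytic subspace of $S$.

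Since $S$ is a complex algebraic variety, definable GAGA \cite[Theorem 3.1]{bbt1} implies that any closed $\bR_{\an,\exp}$-definable analytic subspace of $S^{\mathrm{an}}$ is the analytification of a closed algebraic subvariety. The only thing to check is therefore that $\NL_s$ is closed in $S$, which follows from the fact that $\NL(V_s)\subset\Gamma\backslash\cM$ is closed: indeed, the proof of the previous corollary exhibits $\NL(V_s)$ as $\widecheck\cNL(V_s)\cap \cM$ for an algebraic subvariety $\widecheck\cNL(V_s)\subset\check\cM$, which is closed in $\cM$ and descends to a closed subspace of $\Gamma\backslash\cM$ (having only finitely many components by the Siegel-set finiteness argument already given).

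The only mildly subtle point—but one that is handled uniformly by the same technology used in the weakly special case—is ensuring compatibility of the definable structure on $\NL(V_s)$ (coming from a finite union of connected mixed Hodge subvarieties via Theorem \ref{thm def Hodge var}) with the pullback definable structure on $\NL_s$. Since both are characterized by local definable charts and $\phi$ is definable, no genuine obstruction arises here; the statement follows immediately. Thus I expect no real obstacle: all the heavy lifting has been carried out in the proof of definability of $\phi$, in Theorem \ref{thm def Hodge var}, and in the preceding corollary identifying $\NL(V_s)$ as a definable analytic subspace. The present corollary is then a formal consequence via definable GAGA.
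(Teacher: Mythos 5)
Your proof is correct and follows the same route the paper has in mind: $\NL_s$ is by definition the pullback of the definable analytic subspace $\NL(\cL_s,\cW_s,F_s)\subset\Gamma\backslash\cM$ along the $\bR_{\an,\exp}$-definable period map $\phi$, hence is itself an $\bR_{\an,\exp}$-definable analytic subspace of $S$, and definable GAGA \cite[Theorem 3.1]{bbt1} then yields algebraicity. The paper states the corollary without proof as an immediate consequence of the preceding corollary, Theorem \ref{definability of period maps}, and definable GAGA, which is exactly what you spell out; your additional remarks on closedness are a harmless elaboration of a point the paper leaves implicit.
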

Recalling the definition of $\Hdg_0^d(S)\subset S$ from the introduction, we deduce in the same fashion:
\begin{cor}$\Hdg_0^d(S)\subset S$ is algebraic.
\end{cor}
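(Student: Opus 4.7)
The plan is to mirror the proof of the Noether--Lefschetz corollary just above: exhibit $\Hdg_0^d(S)$ as a definable analytic subspace of $S$, and apply definable GAGA \cite[Theorem 3.1]{bbt1} to promote this to algebraicity. Since by construction $\Hdg_0^d(S)=\phi^{-1}(\Hdg_0^d(\Gamma\backslash\cM))$ with its natural (possibly non-reduced) analytic structure, and since Theorem \ref{definability of period maps} makes $\phi$ an $\bR_{\an,\exp}$-definable map, it is enough to verify the assertion already used in the introduction that $\Hdg_0^d(\Gamma\backslash\cM)\subset\Gamma\backslash\cM$ is a definable analytic subspace.

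To check the latter, I would pass to the $\bR_{\alg}$-definable fundamental set $r^{-1}(\Xi)\subset\cM$ of Definition \ref{defn def structure}, with $\Xi=\mathfrak{S}\times\Sigma$ a product of a Siegel set $\mathfrak{S}\subset\Omega_0$ in the weight-zero pure period domain and a bounded semi-algebraic $\Sigma\subset\cS(W)(\bR)$. The key intermediate claim is that only finitely many integer vectors $v_1,\ldots,v_N\in (W_0 V)_\Z$ occur as elements of $\Hdg_0^d(V)$ as $V$ ranges over $r^{-1}(\Xi)$. Because $W_{-1}$ carries a mixed Hodge structure of weights $\leq -1$, one has $F^0\cap (W_{-1})_\bR = 0$, so restriction to the associated graded yields an injective map $\Hdg_0(V)_\Z\hookrightarrow \Hdg_0(\Gr_0^WV)_\Z$ respecting $q_0$. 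The finiteness therefore reduces to the standard pure weight-zero fact, already exploited in \cite{bkt}, that on a Siegel set in $\Omega_0$ only finitely many integer weight-zero Hodge classes of $q_0$-norm $\leq d$ occur.

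Granting this, for each $v_i$ the condition $v_i\in F^0$ defines a closed definable analytic subspace of $r^{-1}(\Xi)$ (it is an analytic intersection condition in a single fiber of a definable holomorphic bundle over $r^{-1}(\Xi)$), and the union of the $N$ such subspaces descends along $r^{-1}(\Xi)\to\Gamma\backslash\cM$ to a definable analytic subspace equal to $\Hdg_0^d(\Gamma\backslash\cM)$. Pulling back along the definable map $\phi$ then exhibits $\Hdg_0^d(S)$ as a definable analytic subspace of the algebraic variety $S$, and definable GAGA \cite[Theorem 3.1]{bbt1} upgrades this to algebraicity. The main obstacle is the pure-case finiteness of bounded-norm integer Hodge classes on a Siegel set; the reduction from the mixed to the pure case is then formal, via the injection into $\Gr_0^W$ coming from the vanishing $F^0\cap(W_{-1})_\bR=0$.
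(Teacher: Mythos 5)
Your framework — exhibit $\Hdg_0^d(S)=\phi^{-1}(\Hdg_0^d(\Gamma\backslash\cM))$ as a definable analytic subspace of $S$ and apply definable GAGA — is the right one and matches the paper's ``same fashion'' reference. The injection $\Hdg_0(V)_\Z\hookrightarrow\Hdg_0(\Gr^W_0V)_\Z$ for each $V$ (via $F^0\cap(W_{-1})_\bR=0$, a strictness consequence) is also correct. However, your ``key intermediate claim'' does not follow from these ingredients, and this is a genuine gap.

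The injection is an injection \emph{for each fixed} $V\in r^{-1}(\Xi)$, not a statement about the union over all such $V$. Combined with the pure-case finiteness it shows that only finitely many \emph{symbols} $\bar v\in(\Gr^W_0V)_\Z$ can occur, but it says nothing a priori about the integral lifts $v\in(W_0V)_\Z$ of a given $\bar v$: as $V$ varies, different filtrations could make different lifts into Hodge classes, and $r^{-1}(\Xi)$ is unbounded in the ``imaginary splitting'' direction (already for $\C^*=\Z\backslash\C$, the sl$_2$-fundamental set is a full vertical strip). So finiteness of symbols does not formally yield finiteness of Hodge classes, and your ``therefore'' is unjustified. The missing ingredient is precisely where the boundedness of the $\cS(W)(\bR)$-component $\Sigma$ of $\Xi$ must enter: by functoriality of the sl$_2$-splitting, a Hodge class $v$ (giving a morphism $\Z(0)\to V$ of MHS) is killed by the sl$_2$-grading $\hat T=r(V)$, so $v$ lies in the $0$-eigenspace $V_0(\hat T)$ and equals $\sigma_{\hat T}(\bar v)$, where $\sigma_{\hat T}:\Gr^W_0V_\bR\xrightarrow{\sim}V_0(\hat T)\subset (W_0)_\bR$ is the splitting section. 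Since $\hat T$ ranges over the bounded semi-algebraic set $\Sigma$ and $\bar v$ over a finite set, $v$ lies in a bounded subset of $(W_0)_\bR$ and is integral, hence there are only finitely many. With this added step the rest of your argument (finite union of algebraic conditions $v_i\in F^0$, descent, pullback, definable GAGA) goes through as you wrote it.
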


\bibliography{biblio.mixed.definable}
\bibliographystyle{plain}

\end{document}